\documentclass[review]{elsarticle}

\usepackage{lipsum}
\usepackage{amsmath}
\usepackage{amsthm}
\usepackage{amsfonts}
\usepackage{graphicx}
\usepackage{algorithm}
\usepackage{algorithmic}
\usepackage{verbatim}
\usepackage{caption}
\usepackage{subcaption}
\usepackage{float}
\usepackage{booktabs}
\usepackage{array}
\usepackage{lineno,hyperref}
\usepackage[nameinlink,noabbrev]{cleveref}
\usepackage{amsopn}
\usepackage{calc}

\DeclareMathOperator{\diag}{diag}

\newcommand{\ang}[1]{\left\langle {#1} \right\rangle}
\newcommand {\average}[1] {\mbox{$\left\{\!\!\left\{ #1 \right\}\!\!\right\}$}}
\newcommand {\jump}[1] {\mbox{$\left[\!\left[ #1 \right]\!\right]$}}
\newcommand{\set}[2]{\left\{{#1}\,:~{#2}\right\}}

\newtheorem{theorem}{Theorem}[section]

\newtheorem{remark}[theorem]{Remark}

\algsetup{linenosize=\tiny}
\makeatletter

\newcommand{\DESCRIPTION@original@item}{}
\let\DESCRIPTION@original@item\item
\newcommand*{\DESCRIPTION@envir}{DESCRIPTION}
\newlength{\DESCRIPTION@totalleftmargin}
\newlength{\DESCRIPTION@linewidth}
\newcommand{\DESCRIPTION@makelabel}[1]{\llap{#1}}%
\newcommand{\DESCRIPTION@item}[1][]{%
	\setlength{\@totalleftmargin}%
	{\DESCRIPTION@totalleftmargin+\widthof{\textbf{#1 }}-\leftmargin}%
	\setlength{\linewidth}
	{\DESCRIPTION@linewidth-\widthof{\textbf{#1 }}+\leftmargin}%
	\par\parshape \@ne \@totalleftmargin \linewidth
	\DESCRIPTION@original@item[\textbf{#1}]%
}
\newenvironment{DESCRIPTION}
{\list{}{\setlength{\labelwidth}{0cm}%
		\let\makelabel\DESCRIPTION@makelabel}%
	\setlength{\DESCRIPTION@totalleftmargin}{\@totalleftmargin}%
	\setlength{\DESCRIPTION@linewidth}{\linewidth}%
	\renewcommand{\item}{\ifx\@currenvir\DESCRIPTION@envir
		\expandafter\DESCRIPTION@item
		\else
		\expandafter\DESCRIPTION@original@item
		\fi}}
{\endlist}

%\modulolinenumbers[10]

%\journal{Journal of \LaTeX\ Templates}
%\journal{Journal of Computers and Mathematics with Applications}

%%%%%%%%%%%%%%%%%%%%%%%
%% Elsevier bibliography styles
%%%%%%%%%%%%%%%%%%%%%%%
%% To change the style, put a % in front of the second line of the current style and
%% remove the % from the second line of the style you would like to use.
%%%%%%%%%%%%%%%%%%%%%%%

%% Numbered
\bibliographystyle{elsart-num-sort}

%% Numbered without titles
%\bibliographystyle{model1a-num-names}

%% Harvard
%\bibliographystyle{model2-names.bst}\biboptions{authoryear}

%% Vancouver numbered
%\usepackage{numcompress}\bibliographystyle{model3-num-names}

%% Vancouver name/year
%\usepackage{numcompress}\bibliographystyle{model4-names}\biboptions{authoryear}

%% APA style
%\bibliographystyle{model5-names}\biboptions{authoryear}

%% AMA style
%\usepackage{numcompress}\bibliographystyle{model6-num-names}

%% `Elsevier LaTeX' style
%\bibliographystyle{elsarticle-num}

%%%%%%%%%%%%%%%%%%%%%%%%%%%%%%%%%%%%%%%%%%%%%%%%%%%%%%%%%%%%%%%%%%%%%%%%%%%%%%%%%%%%%%%%%%%%%%%%%%%%%%%%%%%%%%%%%%%

\begin{document}
	
\begin{frontmatter}
		
	\title{Stochastic Discontinuous Galerkin Methods with Low--Rank Solvers for Convection Diffusion Equations}
						
	%% or include affiliations in footnotes:z
	\author[mymainaddress]{Pelin \c{C}\.{i}lo\u{g}lu}
	\ead{pciloglu@metu.edu.tr}
		
	\author[mymainaddress]{Hamdullah Y\"ucel\corref{mycorrespondingauthor}}
	\cortext[mycorrespondingauthor]{Corresponding author}
	\ead{yucelh@metu.edu.tr}
		
	\address[mymainaddress]{Institute of Applied Mathematics, Middle East Technical University, 06800 Ankara, Turkey}

	\begin{abstract}
        We investigate numerical behaviour of a convection diffusion equation with random coefficients by approximating  statistical moments of the solution. Stochastic Galerkin approach, turning the original stochastic problem to a system of deterministic convection diffusion equations, is used to handle the stochastic domain in this study, whereas discontinuous Galerkin method is used to discretize spatial domain due to its local mass conservativity. A priori error estimates of the stationary problem  and stability estimate of the unsteady model problem are derived in the energy norm. To address the curse of dimensionality of Stochastic Galerkin method, we take advantage of the low--rank  Krylov subspace methods, which reduce both the storage requirements and the computational complexity by exploiting a Kronecker--product structure of system matrices. The efficiency of the proposed methodology is illustrated by numerical experiments on the benchmark problems.
	\end{abstract}
		
	% REQUIRED
	\begin{keyword}
        uncertainty quantification, stochastic discontinuous Galerkin, error estimates, low--rank approximations, convection diffusion equation with random coefficients
	    \MSC[2010] 35R60\sep  60H15\sep 60H35\sep 65N15 \sep 65N30
	\end{keyword}
		
%		% REQUIRED
%		\begin{AMS}
%			35R60, 60H15, 	60H35, 65N15, 	65N30
%		\end{AMS}		
\end{frontmatter}
	
%\linenumbers
	
%%%%%%%%%%%%%%%%%%%%%%%%%%%%%%%%%%%%%%%%%%%%%%%%%%%%%%%%%%%%%%%%%%%%%%%%%%%%%%%%%%%%%%%%%%%%%%%%%%%%%%%%%%%%%%%%%%%%%%%%%
%%%%%%%%%%%%%%%%%%%%%%%%%%%%%%%%%%%%%%%%%%%%%%%%%%%%%%%%%%%%%%%%%%%%%%%%%%%%%%%%%%%%%%%%%%%%%%%%%%%%%%%%%%%%%%%%%%%%%%%%%

\section{Introduction}\label{sec:intro}

To simulate complex behaviors of physical systems, ones make predictions and hypotheses about certain outputs of interest with the help of simulation of mathematical models. However, due to the lack of knowledge or inherent variability in the model parameters, such real-problems formulated by mathematical  models generally come with uncertainty concerning computed quantities; see, e.g., \cite{PJRoache_1988}. Therefore, the idea of uncertainty quantification, i.e., quantifying the effects of uncertainty on the result of a computation, has become a powerful tool for modeling physical phenomena in the last few years.

In order to solve PDEs with random coefficients,  there exist three competing methods in the literature: the Monte Carlo method \cite{GSFishman_1996,JSLiu_2008a}, the stochastic collocation method \cite{IBabuska_FNobile_RTempone_2007a,DXiu_JSHesthaven_2005a}, and the stochastic Galerkin method \cite{IBabuska_RTempone_GEZouraris_2004a,RGGhanem_PDSpanos_1991a}. Although the Monte Carlo method is popular for its simplicity,  natural parallelization, and broad applications, it  features slow convergence. For the stochastic collocation methods, the crucial issue  is how to construct the set of collocation points appropriately because the choice of the collocation points determines the efficiency of the method. In contrast to the Monte Carlo approach and the stochastic collocation method, the stochastic Galerkin method is a nonsampling approach, which transforms a PDE with random coefficients into a large system of coupled deterministic PDEs. As in the classic (deterministic) Galerkin method, the idea behind the stochastic Galerkin method is to seek a solution for the model equation such that the residue is orthogonal to the space of polynomials. An important feature of this technique is the separation of the spatial and stochastic variables, which  allows a reuse of established numerical techniques.

In this paper, we mainly focus on the numerical investigation of a convection diffusion equation with random coefficients by using the stochastic Galerkin approach. Corresponding PDE can be considered as a basic model for transport phenomena in random media. For petroleum reservoir simulations or groundwater flow problems, permeability is desperately needed; however, it is hard to accurately measure permeability field in the earth due to the large area of oil reservoir and complicated earth structure. Hence, it is reasonable to model the permeability parameter as a random field, which corresponds to the solution of a convection diffusion equation; see, e.g., \cite{RGhanem_SDham_1998a,CWinter_DTartakovsky_2000}. In the literature, several stochastic finite element methods have been proposed and analysed, see, e.g., \cite{IBabuska_RTempone_GEZouraris_2004a,IBabuska_RTempone_GEZouraris_2005a,MEiermann_OGErnst_EUllmann_2007a,OGErnst_EUllmann_2010,PFrauenfelder_CSchwab_RTodor_2005,HMatthies_AKeese_2005,DXiu_JShen_2009} and the references therein. However, there are a few work on the formulation and analysis of stochastic discontinuous Galerkin method; see, for instance \cite{YCao_KZhang_RZhang_2008,RMYao_LJBo_2007}. To the best of the authors’ knowledge, there exists any study on an analysis of stochastic discontinuous Galerkin methods with convection diffusion equations. With the present paper, we intend to fill this gap. Compared with the discontinuous Galerkin method, the finite difference method is not able to handle complex geometries, the finite volume method is not capable of achieving high--order accuracy, and the standard continuous finite element method lacks the ability of local mass conservation. Moreover, especially for convection dominated problems, DG methods produce stable concretization without the need for stabilization strategies and they allow for different orders of approximation to be used on different elements in a very straightforward manner \cite{DNArnold_FBrezzi_BCockburn_LDMarini_2002a,BRiviere_2008a}.

A major drawback of the stochastic Galerkin methods is the rapid increase of dimensionality, called as the curse of dimensionality. We address this issue by using low--rank  Krylov subspace methods, which reduce both the storage requirements and the computational complexity by exploiting a Kronecker--product structure of system matrices, see, e.g., \cite{JBallani_LGrasedyck_2013,DKressner_CTobler_2011,MStoll_TBreiten_2015}. Similar approaches have been used to solve steady stochastic diffusion equations \cite{SDolgov_BNKhoromskij_ALitvinenko_HGMatthies_2015,KLee_HCElman_2017,HGMatthies_EZander_2012},  unsteady stochastic diffusion equations \cite{PBenner_AOnwunto_MStoll_2015}, and stochastic Navier--Stokes equations \cite{HCElman_TSu_2020,KLee_HCElman_BSousedik_2019}.  In the aforementioned studies, randomness is generally defined in the diffusion parameter however we here consider the randomness both in diffusion or convection parameters.

The rest of the paper is organized as follows: In the next section, we introduce our stationary model problem, that is, a convection diffusion equation with random coefficients, and provide an overview of its discretization, obtained by
Karhunen--Lo\`{e}ve (KL) expansion, stochastic Galerkin method, and discontinuous Galerkin method. In Section~\ref{sec:error}, we derive a priori error estimates for the stationary problem  and stability estimates for the unsteady model problem  in the energy norm. We discuss the implementation of low--rank iterative solvers in Section~\ref{sec:lowrank}. As an extension of the concepts in Section~\ref{sec:model}, we proceed to Section~\ref{sec:unsteady} to introduce and analyze our strategy for the unsteady analogue of the steady--state model. Numerical results are given in Section~\ref{sec:num} to show the efficiency of the proposed approach. Finally, we draw some conclusions and discussions in Section~\ref{sec:conclusions} based on the findings in the paper.

%%%%%%%%%%%%%%%%%%%%%%%%%%%%%%%%%%%%%%%%%%%%%%%%%%%%%%%%%%%%%%%%%%%%%%%%%%%%%%%%%%%%%%%%%%%%%%%%%%%%%%%%%%%%%%%%%%%%%%%
%%%%%%%%%%%%%%%%%%%%%%%%%%%%%%%%%%%%%%%%%%%%%%%%%%%%%%%%%%%%%%%%%%%%%%%%%%%%%%%%%%%%%%%%%%%%%%%%%%%%%%%%%%%%%%%%%%%%%%%

\section{Stationary model problem with random coefficients}\label{sec:model}

Let $\mathcal{D} \subset \mathbb{R}^2$ be a bounded open set with Lipschitz boundary $\partial \mathcal{D}$, and  the triplet $(\Omega, \mathcal{F}, \mathbb{P})$ denotes a complete probability space, where $\Omega$ is a sample space of events, $\mathcal{F} \subset 2^{\Omega}$ denotes a $\sigma$--algebra, and $\mathbb{P}: \mathcal{F} \rightarrow [0,1]$ is the associated probability measure. A generic random field $\eta$ on the probability space $(\Omega, \mathcal{F}, \mathbb{P})$ is denoted by $\eta(x,\omega):\mathcal{D} \times \Omega \rightarrow \mathbb{R}$. For a fixed $x\in \mathcal{D}$, $\eta(x,\cdot)$ is a real--value square integrable random variable, i.e.,
\begin{equation*}
\eta(x, \cdot) \in L^2(\Omega, \mathcal{F}, \mathbb{P}) : = \{X: \Omega \rightarrow \mathbb{R}\, : \; \int_{\Omega} |X(\omega)|^2 \, d\mathbb{P}(\omega) < \infty  \},
\end{equation*}
where $L^2(\Omega)=L^2(\Omega, \mathcal{F}, \mathbb{P})$ is equipped with the norm $\ang{XY} = \left( \int_{\Omega} X\,Y\, d\mathbb{P}(\omega) \right)^{1/2}$. We denote the mean of $\eta(\mathbf{x},\omega)$ at a point $\mathbf{x} \in \mathcal{D}$ by $\overline{\eta}(\mathbf{x}):= \ang{\eta(x,\cdot)}$. Then,  the covariance of $\eta$ at $\mathbf{x},\mathbf{y} \in \mathcal{D}$ is given by
\begin{equation}\label{eqn:cov}
\mathcal{C}_{\eta}(\mathbf{x},\mathbf{y}) := \ang{(\eta(\mathbf{x},\cdot) - \overline{\eta}(\mathbf{x})) (\eta(\mathbf{y},\cdot) - \overline{\eta}(\mathbf{y}))}.
\end{equation}
If we set $\mathbf{x}=\mathbf{y}$ in \eqref{eqn:cov},  the variance $\mathcal{V}_{\eta}$ is obtained at $\mathbf{x} \in \mathcal{D}$. We also note that the standard derivation of $\eta$ is  $\kappa= \sqrt{\mathcal{V}_{\eta}}$.

As a model problem, we first consider  a stationary convection diffusion equation with random coefficients: find a random function $u: \overline{\mathcal{D}} \times \Omega \rightarrow \mathbb{R}$ such that $\mathbb{P}$-almost surely in $\Omega$
\begin{subequations}\label{eqn:m1}
\begin{eqnarray}
-\nabla \cdot ( a(x,\omega) \nabla u(x,\omega) ) + \mathbf{b}(x,\omega)\cdot \nabla u(x,\omega)  & = & f(x)  \quad  \hbox{   in} \;\; \mathcal{D} \times \Omega, \\
u(x,\omega) & = & u_d(x) \quad   \hbox{on} \;\; \partial \mathcal{D} \times \Omega,
\end{eqnarray}
\end{subequations}
where  $a:(\mathcal{D} \times \Omega) \rightarrow \mathbb{R}$ and $\mathbf{b}:(\mathcal{D} \times \Omega) \rightarrow \mathbb{R}^2$ are  random diffusivity and velocity coefficients, respectively, which assumed to have continuous and bounded covariance functions. The functions $f(x) \in L^2(\mathcal{D})$ and $u_d(x)\in L^2(\mathcal{D})$ correspond to the deterministic source term and Dirichlet boundary condition, respectively. To show the regularity of the solution $u$, we need to make the following assumptions:
\begin{DESCRIPTION}
  \item[i)] The diffusivity coefficient $a(x,\omega)$ is $\mathbb{P}$--almost surely uniformly positive, that is, there exist constants $a_{\min}, a_{\max}$ such that $0 < a_{\min} \leq a_{\max} < \infty$, with
\begin{equation}\label{abound}
a_{\min} \leq  a(x,\omega) \leq a_{\max} \qquad \hbox{a. e.} \;\; \hbox{in  } \mathcal{D} \times \Omega.
\end{equation}
\item[ii)] The velocity coefficient $\mathbf{b}$ satisfies $\mathbf{b} \in \big( L^{\infty}(\overline{\mathcal{D}}) \big)^2$ and $\nabla \cdot \mathbf{b}(x,\omega)=0$.
\end{DESCRIPTION}

\noindent Under the assumptions on the coefficients provided above, the well--posedness of the model equation \eqref{eqn:m1} follows from the classical Lax--Milgram lemma; see, e.g., \cite{IBabuska_RTempone_GEZouraris_2004a,GJLord_CEPowell_TShardlow_2014}.

In the following, we introduce the well--known approach Karhunen--L\`{o}eve expansion for the representation of the random coefficients, the solution representation via stochastic Galerkin method and symmetric interior penalty Galerkin method, and the resulting linear system.

\subsection{Finite expansion of random fields}

To solve the model problem \eqref{eqn:m1} numerically, it is needed to reduce the stochastic process into a finite number of mutually uncorrelated, sometimes mutually independent, random variables. Therefore, we assume that the given coefficients $a(x,\omega)$ and  $\mathbf{b}(x,\omega)$ can be approximated by a prescribed finite number of uncorrelated components $\xi_i(\omega), \; i=1, \ldots, N \in \mathbb{N}$, called as finite dimensional noise \cite{IBabuska_RTempone_GEZouraris_2004a,NWiener_1938a}.  Let $\Gamma_i = \xi_i(\Omega) \in \mathbb{R}$ be a bounded interval and $\rho_i \, : \, \Gamma_i \rightarrow [0,1]$ be the probability density functions of the random variables $\xi_i(\omega), \; i=1, \ldots, N \in \mathbb{N}$  with $\omega \in \Omega$. Then, the joint probability density function and the support of such probability density  are  denoted by  $\rho(\xi), \; \xi \in \Gamma$  and  $\Gamma= \prod \limits_{n=1}^{N} \Gamma_n$, respectively.

Following the Karhunen--L\`{o}eve (KL) expansion \cite{KKarhunen_1947,MLoeve_1946}, a random field $\eta(\mathbf{x},\omega): \mathcal{D} \times \Omega \rightarrow \mathbb{R}$ with a continuous covariance function $\mathcal{C}_{\eta}(\mathbf{x},\mathbf{y})$ defined in \eqref{eqn:cov} admits a proper orthogonal decomposition
\begin{equation}\label{eqn:kl}
\eta(\mathbf{x},\omega) = \overline{\eta}(\mathbf{x}) +  \kappa \sum \limits_{k=1}^\infty \sqrt{\lambda_k}\phi_k(\mathbf{x})\xi_k(\omega),
\end{equation}
where $\xi:=\{\xi_1,\xi_2,\ldots\}$ are uncorrelated random  variables. The pair $\{\lambda_k,\phi_k\}$ is a set of  the eigenvalues and eigenfunctions of the corresponding covariance operator $\mathcal{C}_{\eta}$. In order to obtain eigenpairs $\{\lambda_k,\phi_k\}$, ones need to solve the following eigenvalue problem
\[
 \int_{\mathcal{D}}  \mathcal{C}_{\eta}(\mathbf{x},\mathbf{y}) \phi(\mathbf{y}) \; d\mathbf{y} = \lambda_i \phi(\mathbf{x}).
\]
It is noted that as long as the correlation is not zero, the eigenvalues $\{\lambda_k\}$ form a sequence  of nonnegative real numbers decreasing to zero. We approximate $\eta(\mathbf{x},\omega)$ by truncating its KL expansion of the form
\begin{equation}\label{eqn:kltrun}
\eta(\mathbf{x},\omega) \approx \eta_N(\mathbf{x},\omega) := \overline{\eta}(\mathbf{x}) + \kappa \sum \limits_{k=1}^N \sqrt{\lambda_k}\phi_k(\mathbf{x})\xi_k(\omega).
\end{equation}
Here, the choice of the truncated number $N$ is usually based on the speed of decay on the eigenvalues  since
\[
\sum \limits_{i=1}^{\infty}  \lambda_i = \int_{\mathcal{D}} \mathcal{V}_{\eta}(\mathbf{x}) \, d\mathbf{x}.
\]
The truncated KL expansion \eqref{eqn:kltrun} is a finite representation of the random field $\eta(\mathbf{x},\omega)$ in the sense that the mean-square error of approximation is minimized; see, e.g., \cite{IBabuska_PChatzipantelidis_2002a}.

By the assumption based on finite dimensional noise and Doob--Dynkin lemma \cite{BOksendal_2003}, we can replace the probability space $(\Omega, \mathcal{F}, \mathbb{P})$ with $(\Gamma, \mathcal{B}(\Gamma), \rho(\xi)d\xi)$, where $\mathcal{B}(\Gamma)$ denotes  Borel $\sigma$--algebra and $\rho(\xi)d\xi$ is the distribution measure of the vector $\xi$. Then, the solution corresponding to the stochastic PDE \eqref{eqn:m1} admits exactly the same parametrization, that is, $u(\mathbf{x}, \omega) = u(\mathbf{x}, \xi_1(\omega), \xi_2(\omega), \ldots, \xi_N(\omega))$. Hence, we can state the tensor--product space $H^k(\mathcal{D}) \otimes L^2(\Gamma)$, which is endowed with the norm
\[
\|\eta\|_{H^k(\mathcal{D}) \otimes L^2(\Gamma)} := \left( \int_{\Gamma} \|\eta(\cdot,\xi)\|^2_{H^k(\mathcal{D})} \rho(\xi) \, d \xi\right)^{1/2} < \infty.
\]
Further, the following isomorphism relation holds
\[
H^k(\mathcal{D}) \otimes L^2(\Gamma) \simeq L^2(H^k(\mathcal{D});\Gamma) \simeq  H^k(\mathcal{D};L^2(\Gamma)).
\]
\begin{comment}
with the definitions
\[
L^2(H^k(\mathcal{D});\Gamma) = \left \{v:\mathcal{D} \times \Gamma \rightarrow \mathbb{R}\,: \; \int_{\Gamma} \|v(\cdot,\xi)\|^2_{H^k(\mathcal{D})} < \infty \right \}
\]
and
\begin{align*}
&H^k(\mathcal{D};L^2(\Gamma)) \\
& \quad = \left\{ v:\mathcal{D} \times \Gamma \rightarrow \mathbb{R}\,: \; \forall |\alpha| \leq k \; \exists \; \partial^{\alpha} v \in L^2(\mathcal{D}) \otimes L^2(\Gamma) \hbox{  with   } \right. \\
& \qquad \;\;\int_{\Gamma} \int_{\mathcal{D}} \partial_{\alpha} v(x,\xi) \varphi(x,\xi) \, dx \, d\xi =(-1)^{\alpha}
\int_{\Gamma} \int_{\mathcal{D}}  v(x,\xi) \partial_{\alpha} \varphi(x,\xi) \, dx \, d\xi, \\
&\qquad \;\; \left. \forall \varphi \in C_0^{\infty}(\mathcal{D} \times \Gamma) \right\}.
\end{align*}
\end{comment}

\begin{remark}
The covariance functions or eigenpairs can be computed explicitly for some random inputs, such as Gaussian or uniform processes with the exponential covariance function. However, they are usually not known a priori and therefore can be approximated numerically, such as collocation and Galerkin methods, see \cite{GJLord_CEPowell_TShardlow_2014} for more details.
\end{remark}

\subsection{Stochastic Galerkin Method}

The solution of the model problem  \eqref{eqn:m1}, $ u(x,\omega) \in L^2(\Omega, \mathcal{F},\mathbb{P})$, is represented by a generalized polynomial chaos (PC) approximation
\begin{equation}\label{eq:pcekdv}
	u(x,\omega) = \sum_{i=0}^{\infty} u_i(x) \psi_i(\xi(\omega)),
\end{equation}
where $u_i(x)$, the deterministic modes of the expansion,  are given by
\[
u_i(x) = \frac{\ang{u(x,\omega)\psi_i(\xi)}}{\ang{\psi_i^2(\xi)}},
\]
$\xi$ is a finite--dimensional random vector, and $\psi_i$ are multivariate orthogonal polynomials having the following properties:
\[
\ang{\psi_0(\xi)}=1, \qquad \ang{\psi_i(\xi)}=0, \quad i>0, \qquad \ang{\psi_i(\xi)\psi_j(\xi)}=\ang{\psi_i^2(\xi)} \delta_{ij}
\]
with
\[
\ang{\psi_i(\xi)} = \int_{\omega \in \Omega} \psi_i(\xi(\omega)) \, d\mathbb{P}(\omega) = \int_{\xi \in \Gamma} \psi_i(\xi) \rho(\xi)\, d\xi,
\]
where $\Gamma$ and $\rho$ are the support and probability density function of $\xi$, respectively. The orthogonal polynomials, i.e., $\psi_i$,  are chosen according to the type of the distribution of  random input,  for instance, Hermite polynomials and Gaussian random variables, Legendre polynomials and uniform random variables, Laguerre polynomials and gamma random variables \cite{RKoekoek_PALesky_2010}. The probability density functions of random distributions are corresponding to the weight functions of some particular types of orthogonal polynomials.

\begin{comment}
\begin{table}[h]
	\caption{Correspondence between polynomial basis in Askey--scheme. }
	\label{tab:askey}
	\centering
	\begin{tabular}{ l | l | l }
		Distribution & Basis & Support \\
		\hline
		Gaussian & Hermite   & $ (-\infty,\infty) $ \\
		Gamma    & Laguerre  & $ [0,\infty) $ \\
		Beta     & Jacobi    & $ [a,b] $ \\
		Uniform  & Legendre  & $ [a,b] $ \\
	\end{tabular}
\end{table}
as given in Table~\ref{tab:askey}, which is called as Askey-scheme \cite{RKoekoek_PALesky_2010}.
\end{comment}

The Cameron--Martin theorem \cite{RHCameron_WTMartin_1947a} states that the series \eqref{eq:pcekdv} converges in the Hilbert space  $L^2(\Omega, \mathcal{F},\mathbb{P})$. Then, as done in the case of KL expansion \eqref{eqn:kltrun}, we truncate \eqref{eq:pcekdv} as
\begin{equation}
	\label{eq:pcekdvtrun}
	u(x,\omega) \approx u_P(x,\omega)  = \sum_{i=0}^{P-1} u_i(x) \psi_i(\xi(\omega)),
\end{equation}
where the total number of PC basis is determined by the dimension $ N $ of the random vector $ \xi $ and the highest order $ Q $ of the basis polynomials $ \psi_i $
\begin{equation*}	
	P = 1 + \sum \limits_{s=1}^{Q} \frac{1}{s!} \prod \limits_{j=0}^{s-1} (N + j) = \frac{(N+Q)!}{N!Q!}.
\end{equation*}
Then, the corresponding stochastic space is denoted by
\begin{equation}
\mathcal{Y}_n  := \hbox{span}\{\psi_i(\xi): \; i=0,1,\ldots,P-1\} \subset L^2(\Gamma).
\end{equation}
We refer to \cite{OGErnst_EUllmann_2010,CEPowell_HCElman_2009} and references therein for the construction of the stochastic space $\mathcal{Y}_n$.

Next, if we insert  KL expansions \eqref{eqn:kltrun} of the diffusion $a(x,\omega)$ and the convection $\mathbf{b}(x,\omega)$ coefficients, and the solution expression \eqref{eq:pcekdvtrun} into  \eqref{eqn:m1}, we obtain
\begin{eqnarray}\label{CD_SGAA}
&-&\sum_{i=0}^{P-1}\nabla\cdot\Bigg( \Bigg(\overline{a}(x) + \kappa_a \sum_{k=1}^{N}  \sqrt{\lambda_k^{a}} \phi_k^{a}(x) \xi_k\Bigg)\nabla u_i(x)\psi_i \Bigg)  \nonumber \\
&&\qquad + \sum_{i=0}^{P-1} \Bigg(\overline{\mathbf{b}}(x) + \kappa_{\mathbf{b}} \sum_{k=1}^{N}   \sqrt{\lambda_k^{b}} \phi_k^{b}(x) \xi_k\Bigg)\cdot \nabla u_i(x)\psi_i
=f(x).
\end{eqnarray}
By projecting (\ref{CD_SGAA}) onto the space spanned by the PC basis functions, we obtain the following linear system, consisting  of $P$ deterministic convection diffusion equations for $j=0,...,P-1$
\begin{eqnarray}
-\sum_{i=0}^{P-1} \big( \nabla \cdot (a_{ij}\nabla u_i(x)) +  \mathbf{b}_{ij} \cdot \nabla u_i(x) \big) = \ang{\psi_j}f(x),
\end{eqnarray}
where
\begin{eqnarray*}
a_{ij} &=& \overline{a}(x) \ang{ \psi_i^2(\xi)} \delta_{ij} + \kappa_a \sum_{k=1}^N  \sqrt{\lambda_k^{a}} \phi_k^{a}(x) \ang{\xi_k \psi_i(\xi) \psi_j(\xi)}, \\
\mathbf{b}_{ij} &=& \overline{\mathbf{b}}(x) \ang{ \psi_i^2(\xi)} \delta_{ij} + \kappa_{\mathbf{b}}\sum_{k=1}^N  \sqrt{\lambda_k^{\mathbf{b}}} \phi_k^{\mathbf{b}}(x) \ang{\xi_k \psi_i(\xi) \psi_j(\xi)}.
\end{eqnarray*}

\indent Here, the quantity of interest is the statistical moments of the solution $u(x,\omega)$ in \eqref{eqn:m1} rather than  the solution $u(x,\omega)$. Once the modes $u_i, \; i=0,1,\ldots,P-1$, have been computed, the statistical moments and the probability density of the solution  can be easily deduced. For instance, the mean and the variance of the solution are
\begin{equation}
\ang{u(x,\xi)} = u_0(x), \qquad
\mathcal{V}(u(x,\xi)) = \sum \limits_{i=1}^{P-1} u_i^2(x) \ang{\psi_i^2(\xi)},
\end{equation}
respectively.

\subsection{Symmetric interior penalty Galerkin method}

Let $\{ \mathcal{T}_h\}_h$ be a family of shape-regular simplicial triangulations of $\mathcal{D}$. Each mesh $\mathcal{T}_h$ consists of closed triangles such that $\overline{\mathcal{D}} = \bigcup_{K \in \mathcal{T}_h} \overline{K}$ holds. We assume that the mesh is regular in the
following sense: for different triangles
$K_i, K_j \in \mathcal{T}_h$, $i \not= j$, the intersection  $K_i \cap K_j$ is either empty or a vertex or an edge, i.e., hanging nodes are
not allowed. The diameter of an element $K$ and the length of an edge $E$ are denoted by $h_{K}$ and $h_E$, respectively. Further, the maximum value of the element diameter
is denoted by $h=\max \limits_{K \in \mathcal{T}_h} h_{K}$.

We split the  set of all edges $\mathcal{E}_h$ into
the set $\mathcal{E}^0_h$ of interior edges  and the set $\mathcal{E}^{\partial}_h$
of boundary edges so that
$\mathcal{E}_h=\mathcal{E}^{0}_h \cup\mathcal{E}^{\partial}_h$.
Let $\mathbf{n}$ denote the unit outward normal to $\partial \mathcal{D}$.
For a fixed realization $\omega$, the inflow and outflow parts of $\partial \mathcal{D}$ are denoted by $\partial \mathcal{D}^-$ and $\partial \mathcal{D}^+$, respectively,
\[
          \partial \mathcal{D}^- = \set{x \in \partial \mathcal{D}}{ \mathbf{b}(x,\omega) \cdot \mathbf{n}(x) < 0}, \;
          \partial \mathcal{D}^+ = \set{x \in \partial \mathcal{D}}{ \mathbf{b}(x,\omega) \cdot \mathbf{n}(x) \geq 0}.
\]
Similarly, the inflow and outflow boundaries of an element $K$ are defined by
\[
\partial K^-\hspace{-1mm}=\set{x \in \partial K}{\hspace{-2mm}\mathbf{b}(x,\omega) \cdot \mathbf{n}_{K}(x) <0}, \partial K^+\hspace{-1mm}=\set{x \in \partial K}{\hspace{-2mm}\mathbf{b}(x,\omega) \cdot \mathbf{n}_{K}(x) \geq 0},
\]
where $\mathbf{n}_{K}$ is the unit normal vector on the boundary $\partial K$ of an element $K$.

Let the edge $E$ be a common edge for two elements $K$ and $K^e$. For a piecewise continuous scalar function $u$,
there are two traces of $u$ along $E$, denoted by $u|_E$ from inside $K$ and $u^e|_E$ from inside $K^e$.
The jump and average of $u$ across the edge $E$ are defined by:
\begin{align}
\jump{u}&=u|_E\mathbf{n}_{K}+u^e|_E\mathbf{n}_{K^e}, \quad
\average{u}=\frac{1}{2}\big( u|_E+u^e|_E \big).
\end{align}

Similarly, for a piecewise continuous vector field $\nabla u$, the jump and average across an edge $E$ are given by
\begin{align}
           \jump{\nabla u}&=\nabla u|_E \cdot \mathbf{n}_{K}+\nabla u^e|_E \cdot \mathbf{n}_{K^e}, \quad
          \average{\nabla u}=\frac{1}{2}\big(\nabla u|_E+\nabla u^e|_E \big).
\end{align}
For a boundary edge $E \in K \cap \partial \mathcal{D}$, we set $\average{\nabla u}=\nabla u$ and $\jump{u}=u\mathbf{n}$,
where $\mathbf{n}$ is the outward normal unit vector on $\partial \mathcal{D}$.

For an integer $\ell$ and $ K \in \mathcal{T}_h$, let $\mathbb{P}^\ell(K)$ be the set of all polynomials on $K$ of degree at most $\ell$. Then, we define the discrete state and test spaces to be
\begin{align}\label{tspace}
V_h &= \set{u \in L^2(\mathcal{D})}{ u \mid_{K}\in \mathbb{P}^\ell(K) \quad \forall K \in \mathcal{T}_h}.
\end{align}
Note that since discontinuous Galerkin methods impose boundary conditions weakly, the space of discrete states and  test functions are identical.

Following the standard  discontinuous Galerkin structure in \cite{DNArnold_FBrezzi_BCockburn_LDMarini_2002a,BRiviere_2008a},  we define the following (bi)--linear forms for a finite dimensional vector $\xi$:
\begin{eqnarray*}
a_h(u,v,\xi)&=& \sum \limits_{K \in \mathcal{T}_h} \int \limits_{K} a(.,\xi) \nabla u \cdot  \nabla v \, dx
-  \sum \limits_{ E \in \mathcal{E}^{0}_h \cup \mathcal{E}_h^{\partial}} \int \limits_E \average{a(.,\xi)\nabla u }  \jump{v} \, ds\\
&& - \sum \limits_{ E \in \mathcal{E}^{0}_h \cup \mathcal{E}_h^{\partial}} \int \limits_E \average{a(.,\xi) \nabla v }  \jump{u} \; ds
+ \sum \limits_{ E \in \mathcal{E}^{0}_h \cup \mathcal{E}_h^{\partial}} \frac{\sigma }{h_E} \int \limits_E \jump{u} \cdot \jump{v} \, ds   \\
&& + \sum \limits_{K \in \mathcal{T}_h} \int \limits_{K} \mathbf {b}(.,\xi) \cdot \nabla u v \; dx \nonumber  +\hspace{-2.5mm} \sum \limits_{K \in \mathcal{T}_h}\; \int \limits_{\partial K^{-} \backslash \partial \mathcal{D}} \hspace{-4mm}\mathbf {b}(.,\xi) \cdot \mathbf{n}_E (u^e-u)v \, ds\\
&& - \sum \limits_{K \in \mathcal{T}_h} \; \int \limits_{\partial K^{-} \cap \partial \mathcal{D}^{-}} \mathbf {b}(.,\xi) \cdot \mathbf{n}_E u v  \, ds
\end{eqnarray*}
and
\begin{eqnarray*}
l_h(v,\xi)&=&\sum \limits_{K \in \mathcal{T}_h} \int \limits_{K} f v \, dx
	+ \sum \limits_{E \in \mathcal{E}_h^{\partial}}\frac{\sigma}{h_E} \int \limits_E  u_d  \jump{v} \, ds
	-\sum \limits_{E \in \mathcal{E}_h^{\partial}}  \int \limits_E u_d \average{a(.,\xi)\nabla v} \; ds \\
	&& - \sum \limits_{K \in \mathcal{T}_h}  \int \limits_{\partial K^{-} \cap \partial \mathcal{D}^{-}} \mathbf{b}(.,\xi) \cdot \mathbf{n}_E  u_d v  \, ds,
\end{eqnarray*}
where the constant $\sigma>0$  is the interior penalty parameter. It has to be chosen sufficiently large independently of the mesh size to ensure the stability of the DG discretization.

Then, (bi)--linear forms of the stochastic discontinuous Galerkin (SDG)  correspond to
\begin{equation}\label{eq:bilinear}
a_{\xi}(u,v)=\int_{\Gamma}  a_h(u,v,\xi)\rho(\xi) \, d\xi, \quad 	l_{\xi}(v)=\int_{\Gamma}  l_h(v,\xi)\rho(\xi) \, d\xi.
\end{equation}
Now,  we  define the associated energy norm on $\mathcal{D} \times \Gamma$  as
\begin{eqnarray} \label{energynorm}
\lVert u\rVert _{\xi}= \Bigg(\int_{\Gamma}  \lVert u(.,\xi)\rVert_{e}^2 \rho(\xi)\, d\xi \Bigg)^{\frac{1}{2}},
\end{eqnarray}
where $\lVert u(.,\xi)\rVert_{e}$ is the energy norm on $\mathcal{D}$, given as
\begin{eqnarray*}
	\lVert u(.,\xi)\rVert_{e}&=&\Bigg( \sum \limits_{K \in \mathcal{T}_h} \int \limits_{K} a(.,\xi) (\nabla u)^2 \, dx  + \sum \limits_{ E \in \mathcal{E}^{0}_h \cup \mathcal{E}_h^{\partial}} \frac{\sigma}{h_E} \int \limits_E \jump{u}^2 \, ds  \\
&&	+\frac{1}{2}\sum \limits_{ E \in \mathcal{E}_h^{\partial}}\int \limits_E \mathbf {b}(.,\xi)\cdot \mathbf{n}_E u^2 ds
	+ \frac{1}{2}\sum \limits_{ E \in \mathcal{E}^{0}_h }\int \limits_E \mathbf {b}(.,\xi)\cdot \mathbf{n}_E(u^e-u)^2 \, ds\Bigg)^{\frac{1}{2}}.
\end{eqnarray*}
By standard arguments in deterministic case, ones can easily show the coercivity and continuity of $a_{\xi}(\cdot,\cdot)$ for $u, v \in V_h \otimes \mathcal{Y}_n$
\begin{subequations}\label{coer_cont}
\begin{eqnarray}
a_{\xi}(u,u) & \geq & c_{cv} \, \lVert u \rVert _{\xi}^2, \label{coer} \\
a_{\xi}(u,v) & \leq & c_{ct} \, \lVert u\rVert _{\xi}\lVert v\rVert _{\xi}, \label{cont}
\end{eqnarray}
\end{subequations}
where the coercivity constant $c_{cv}$ depends on $a_{\min}$, whereas the continuity constant $c_{ct}$ depends on $a_{\max}$.

Thus, the SDG variational formulation of \eqref{eqn:m1} is as follows: Find $u \in V_h \otimes \mathcal{Y}_n$ such that
\begin{equation}
 a_{\xi}(u,v) = l_{\xi}(v), \qquad  \forall v \in V_h \otimes \mathcal{Y}_n.
\end{equation}

\subsection{Linear System}

After an application of the discretization techniques, one gets the following linear system:
\begin{equation}\label{tensormtrx}
\underbrace{\left( \sum_{i=0}^{N} \mathcal{G}_i \otimes \mathcal{K}_i \right)}_{\mathcal{A}} \, \mathbf{u} = \underbrace{\left( \sum_{i=0}^{N} \mathbf{g}_i \otimes \mathbf{f}_i\right)}_{\mathcal{F}},
\end{equation}
where
$
\mathbf{u} =\left(
u_0, \ldots,u_{P-1}
\right)^T \;\; \hbox{with} \;\;  u_i \in \mathbb{R}^{N_d}, \;\; i=0,1,\ldots,P-1
$
and $N_d$ corresponds to the degree of freedom for the spatial discretization. The stiffness matrices $\mathcal{K}_i \in \mathbb{R}^{N_{d} \times N_d}$  and the right--hand side vectors $\mathbf{f}_i \in \mathbb{R}^{N_d}$  in \eqref{tensormtrx} are given, respectively, by
\begin{eqnarray*}
	\mathcal{K}_0(r,s)\hspace{-3mm}&=&\hspace{-3mm}  \sum \limits_{K \in \mathcal{T}_h} \int \limits_{K} \left(  \overline{a} \, \nabla \varphi_{r} \cdot  \nabla \varphi_{s} + \overline{\mathbf{b}} \cdot \nabla \varphi_{r} \varphi_{s} \right) \, dx \\
&& - \hspace{-2.5mm} \sum \limits_{E  \in \mathcal{E}^{0}_h \cup \mathcal{E}_h^{\partial}} \int \limits_E \big(  \average{\overline{a} \, \nabla \varphi_{r}}  \jump{\varphi_{s}} +  \average{\overline{a} \, \nabla \varphi_{s}}  \jump{\varphi_{r}} \big) \, ds \\
&& + \hspace{-2.5mm} \sum \limits_{ E \in \mathcal{E}^{0}_h \cup \mathcal{E}_h^{\partial}} \frac{\sigma}{h_E} \int \limits_E  \jump{\varphi_{r}} \cdot \jump{\varphi_{s}} \, ds +  \hspace{-2.5mm}  \sum \limits_{K \in \mathcal{T}_h}\; \int \limits_{\partial K^{-} \backslash \partial \mathcal{D}}\hspace{-3mm} \overline{\mathbf{b}} \cdot \mathbf{n}_E (\varphi_{r}^e -\varphi_{r})\varphi_{s} \, ds\\
&&	- \sum \limits_{K  \in \mathcal{T}_h} \; \int \limits_{\partial K^{-} \cap \partial \mathcal{D}^{-}}   \hspace{-3mm}\overline{\mathbf{b}} \cdot \mathbf{n}_E \varphi_{r} \varphi_{s}  \, ds, \\
\mathcal{K}_i(r,s) \hspace{-3mm}&=& \hspace{-3mm} \sum \limits_{K \in \mathcal{T}_h} \int \limits_{K} \left( \Big(\kappa_a \sqrt{\lambda_i^{a}}\phi_{i}^{a}\Big)  \nabla \varphi_{r} \cdot  \nabla \varphi_{s} +  \Big(\kappa_{\mathbf{b}} \sqrt{\lambda_i^{\mathbf{b}}}\phi_{i}^{\mathbf{b}}\Big)  \cdot \nabla \varphi_{r} \varphi_{s} \right) \, dx \\
	&&\hspace{-3mm} -\hspace{-3.5mm}	  \sum \limits_{ E \in \mathcal{E}^{0}_h \cup \mathcal{E}_h^{\partial}} \int \limits_E \left (\average{\Big(\kappa_{a} \sqrt{\lambda_i^{a}}\phi_{i}^{a}\Big)  \nabla \varphi_{r}}  \jump{\varphi_{s}} + \hspace{-1.2mm}	\average{\Big(\kappa_{a} \sqrt{\lambda_i^{a}}\phi_{i}^{a}\Big)  \nabla \varphi_{s}}  \jump{\varphi_{r}} \right) \, ds \\
	&& \hspace{-3mm}+ \sum \limits_{ E \in \mathcal{E}^{0}_h \cup \mathcal{E}_h^{\partial}} \frac{\sigma}{h_E} \int \limits_E  \jump{\varphi_{r}} \cdot \jump{\varphi_{s}} \, ds  \\
&&	\hspace{-3mm} + \sum \limits_{K \in \mathcal{T}_h} \int \limits_{\partial K^{-} \backslash \partial \mathcal{D}} \hspace{-4mm} \Big(\kappa_{\mathbf{b}} \sqrt{\lambda_i^{\mathbf{b}}}\phi_{i}^{\mathbf{b}}\Big)  \cdot \mathbf{n}_E (\varphi_{r}^e-\varphi_{r})\varphi_{s} \, ds \\
&&	\hspace{-3mm} - \sum \limits_{T \in \mathcal{T}_h}  \int \limits_{\partial K^{-} \cap \partial \mathcal{D}^{-}} \hspace{-4mm}\Big(\kappa_{\mathbf{b}} \sqrt{\lambda_i^{\mathbf{b}}}\phi_{i}^{\mathbf{b}}\Big)  \cdot \mathbf{n}_E \varphi_{r} \varphi_{s} \, ds, \\
f_0(s)\hspace{-3mm}&=&\hspace{-3mm}  \sum \limits_{K \in \mathcal{T}_h} \int \limits_{K} f\varphi_{s} \; dx
	+ \sum \limits_{E \in \mathcal{E}_h^{\partial}}\frac{\sigma}{h_E} \int \limits_E  u_d  \jump{\varphi_{s}} \; ds
	-\sum \limits_{E \in \mathcal{E}_h^{\partial}}  \int \limits_E u_d \average{\overline{a} \nabla \varphi_{s}} \; ds \\
	&& \hspace{-3mm}- \sum \limits_{K \in \mathcal{T}_h}  \int \limits_{\partial K^{-} \cap \partial \mathcal{D}^{-}} \overline{\mathbf{b}} \cdot \mathbf{n}_E  u_d \varphi_{s}  \; ds, \\
f_i(s)\hspace{-3mm}&=&\hspace{-3mm} \sum \limits_{E \in \mathcal{E}_h^{\partial}}\frac{\sigma }{h_E} \int \limits_E  u_d \jump{\varphi_{s}} \; ds
	 -\sum \limits_{E \in \mathcal{E}_h^{\partial}}  \int \limits_E u_d \average{\Big(\kappa_{a} \sqrt{\lambda_i^{a}}\phi_{i}^{a}\Big) \nabla \varphi_{s}} \; ds  \;  \\
	\\
	&&\hspace{-3mm} - \sum \limits_{K \in \mathcal{T}_h} \; \int \limits_{\partial K^{-} \cap \partial \mathcal{D}^{-}} \Big(\kappa_{\mathbf{b}} \sqrt{\lambda_i^{\mathbf{b}}}\phi_{i}^{\mathbf{b}}\Big) \cdot \mathbf{n}_E u_d \varphi_{s} \; ds,
\end{eqnarray*}
where $\{\varphi_i(x)\}$ is the set of basis functions for the spatial discretization, i.e., $V_h = \hbox{span} \{\varphi_i(x)\}$.

For $i=0,\ldots, N$  the stochastic matrices $\mathcal{G}_i \in \mathbb{R}^{P \times P}$ in \eqref{tensormtrx} are given by
\begin{equation}\label{stocmtrx}
 \mathcal{G}_0(r,s) = \ang{\psi_r \psi_s}, \qquad  \mathcal{G}_i(r,s)= \ang{\xi_i \psi_r \psi_s},
\end{equation}
whereas the stochastic  vectors $\mathbf{g}_i \in \mathbb{R}^{P}$ in \eqref{tensormtrx} are defined as
\begin{equation}
\mathbf{g}_0(r)= \ang{\psi_r}, \qquad \mathbf{g}_i(r)= \ang{\xi_i \psi_r}.
\end{equation}
In \eqref{stocmtrx} each stochastic basis function $\psi_i(\xi)$ is corresponding to a product of $N$ univariate orthogonal polynomials, i.e.,
$
\psi_i(\xi) = \psi_{i_1}(\xi) \psi_{i_2}(\xi) \ldots \psi_{i_N}(\xi),
$
where the multi--index $i$ is defined by $i=(i_1, i_2, \ldots, i_N)$ with $\sum \limits_{s=1}^N i_s \leq Q$. In this paper, Legendre polynomials are chosen as stochastic basis functions because the underlying random variables have a uniform distribution.

Now, suppose we employ Legendre polynomials in uniform random variables on $(-\sqrt{3}, \sqrt{3})$. Then, recalling the following three--term recurrence for the Legendre polynomials
\begin{eqnarray*}
\psi_{k+1}(x)= \frac{\sqrt{2k+1}\sqrt{2k+3}}{(k+1)\sqrt{3}}x\psi_k(x)-\dfrac{k\sqrt{2k+3}}{(k+1)\sqrt{2k-1}}\psi_{k-1} \; \hbox{with} \; \psi_0=1, \; \psi_{-1} =0,
\end{eqnarray*}
we obtain
\begin{eqnarray*}
\mathcal{G}_0(i,j)&=& \prod_{s=1}^{N} \big\langle {\psi_{i_s}^2(\xi_s) } \big\rangle \delta_{i_sj_s} =  \prod_{s=1}^{N} \delta_{i_sj_s}  =
 \begin{cases}
1, &\mbox{if } i=j, \\
0, & \mbox{otherwise}
\end{cases}
\end{eqnarray*}
and for $k=1:N$
\begin{eqnarray*}
\mathcal{G}_k(i,j)&=& \int_{\Gamma} \xi_k \psi_{i}(\vec{\xi}) \psi_{j}(\vec{\xi}) \rho(\vec{\xi})\;d\vec{\xi}\\
	&=& \int_{-\sqrt{3}}^{\sqrt{3}} \cdots \int_{-\sqrt{3}}^{\sqrt{3}}  \xi_k \psi_{i}(\vec{\xi}) \psi_{j}(\vec{\xi})
	\rho(\vec{\xi})\;d\vec{\xi}\\
	&=&  \Bigg(\prod_{s=1,s\neq k}^{N} \ang{\psi_{i_s}(\xi_s) \psi_{j_s}(\xi_s)}\Bigg) \ang {\xi_k\psi_{i_k}(\xi_k)\psi_{j_k}(\xi_k) } \\
	&=&  \Bigg(\prod_{s=1,s\neq k}^{N} \ang {\psi_{i_s}(\xi_s) \psi_{j_s}(\xi_s)}  \Bigg) \\
   && \;\, \times \left( \dfrac{(i_k+1)\sqrt{3}}{\sqrt{(2i_k+1)(2i_k+3)}}\ang {\psi_{i_k+1}\psi_{j_k} }  +
       \dfrac{i_k\sqrt{3}}{\sqrt{(2i_k+1)(2i_k-1)}}\ang {\psi_{i_k-1}\psi_{j_k} } \right)
\end{eqnarray*}
\begin{eqnarray*}
	&=& \begin{cases}
		\Bigg(\prod \limits_{s=1,s\neq k}^{N} \delta_{i_sj_s}\Bigg) \dfrac{(i_k+1)\sqrt{3}}{\sqrt{(2i_k+1)(2i_k+3)}}, &\mbox{if } i_k+1=j_k, \\
		\Bigg(\prod \limits_{s=1,s\neq k}^{N} \delta_{i_sj_s}\Bigg)\dfrac{i_k\sqrt{3}}{\sqrt{(2i_k+1)(2i_k-1)} }, &\mbox{if } i_k-1=j_k, \\
		0, & \mbox{otherwise}
	\end{cases} \\
	&=& \begin{cases}
	\dfrac{(i_k+1)\sqrt{3}}{\sqrt{(2i_k+1)(2i_k+3)}}, &\mbox{if } i_k+1=j_k \; \hbox{and} \; i_s=j_s,\; s=\{1:N\} 	\setminus \{k\}, \\
	\dfrac{i_k\sqrt{3}}{\sqrt{(2i_k+1)(2i_k-1)}}, &\mbox{if } i_k-1=j_k\; \hbox{and} \; i_s=j_s,\; s=\{1:N\} 	\setminus \{k\}, \\
		0, & \mbox{otherwise}.
	\end{cases}
\end{eqnarray*}
Hence, $\mathcal{G}_0$ is a identity matrix, whereas $\mathcal{G}_k, \; k>0$,  contains at most two nonzero entries per row; see, e.g., \cite{OGErnst_EUllmann_2010,CEPowell_HCElman_2009}. On the other hand, $\textbf{g}_i$ is the first column of $\mathcal{G}_i, \; i=0,1,\ldots,N$.

%%%%%%%%%%%%%%%%%%%%%%%%%%%%%%%%%%%%%%%%%%%%%%%%%%%%%%%%%%%%%%%%%%%%%%%%%%%%%%%%%%%%%%%%%%%%%%%%%%%%%%%%%%%%%%%%%%%%
%%%%%%%%%%%%%%%%%%%%%%%%%%%%%%%%%%%%%%%%%%%%%%%%%%%%%%%%%%%%%%%%%%%%%%%%%%%%%%%%%%%%%%%%%%%%%%%%%%%%%%%%%%%%%%%%%%%%

\section{Error estimates}\label{sec:error}

In this section, we  present  a priori error estimates for stationary convection diffusion equations with random coefficients, discretized by stochastic discontinuous Galerkin method.

Let a partition of the support of probability density in finite dimensional space, i.e.,  $\Gamma= \prod \limits_{n=1}^{N} \Gamma_n$ consists of a finite number of disjoint $\mathbf{R}^N$--boxes, $\gamma = \prod \limits_{n=1}^{N} (r_n^{\gamma},s_n^{\gamma})$, with $(r_n^{\gamma},s_n^{\gamma}) \subset \Gamma_n$ for $n=1,\ldots,N$. The mesh size $k_n$ is defined by $k_n = \max \limits_{\gamma} |s_n^{\gamma} - r_n^{\gamma}|$ for $1 \leq n \leq N$. For the multi--index $q = (q_1,\ldots,q_N)$, the (discontinuous) finite element approximation space with degree at most $q_n$ on each direction $\xi_n$ is denoted by $\mathcal{Y}_k^q \subset L^2(\Gamma)$. Then, for $v\in H^{q+1}(\Gamma), \varphi \in \mathcal{Y}_k^q$, we have the following estimate, see, e.g., \cite{IBabuska_RTempone_GEZouraris_2004a}
\begin{equation}\label{estr}
\min \limits_{\varphi \in \mathcal{Y}_k^q} \|v-\varphi\|_{L^2(\Gamma)} \leq   \sum_{n=1}^{N}\bigg(\frac{k_n}{2}\bigg)^{q_n+1}\dfrac{\lVert \partial^{q_n +1}_{\xi_n}v \rVert_{L^2(\Gamma)} }{(q_n+1)!}.
\end{equation}
To later use,  we  introduce the $L^2$--projection operator  $\Pi_n: L^2(\Gamma) \rightarrow \mathcal{Y}_k^q$ by
\begin{eqnarray}\label{eq:l2projc}
(\Pi_n(\xi)-\xi,\zeta)_{L^2(\Gamma)}=0 \qquad \quad \forall \zeta \in \mathcal{Y}_k^q, \qquad \forall\xi \in L^2(\Gamma),
\end{eqnarray}
and the $H^1$--projection operator $\mathcal{R}_h:H^1(\mathcal{D}) \rightarrow V_h \cap H^1(\mathcal{D})$ by
\begin{subequations}
\begin{eqnarray}
(\mathcal{R}_h( \nu)- \nu,\chi)_{L^2(\mathcal{D})}&=&0 \qquad \quad \forall \chi\in V_h, \qquad \forall \nu \in H^1(\mathcal{D}), \label{eq:Hprojc} \\
(\nabla(\mathcal{R}_h( \nu)- \nu),\nabla \chi)_{L^2(\mathcal{D})}&=&0  \qquad \quad  \forall \chi\in V_h, \qquad \forall \nu \in H^1(\mathcal{D}). \label{eq:Hproj}
\end{eqnarray}
\end{subequations}

Next, we  state the well--known trace and inverse inequalities, which are needed frequently  in the rest of the paper.
\begin{itemize}
\item For positive constant $c_{tr}$  independent of $ K \in \mathcal{T}_h$ and $h$, the trace  inequality is given as follow (see, e.g., \cite[Section~2.1]{BRiviere_2008a}):
\begin{subequations}\label{eq:trace}
	\begin{eqnarray}
		\lVert v \rVert_{E}^2  &\leq& c_{tr} \bigg(\lVert v \rVert_{L^2(K)}^2 + h_K\lvert  v \rvert_{H^1(K)}^2\bigg) \quad  v\in H^1(K),\\
  	   \lVert \nabla v\cdot \mathbf{n_E} \rVert_{E}^2  &\leq& c_{tr} \bigg(\lvert v \rvert_{H^1(K)}^2 + h_K\lvert  v \rvert_{H^2(K)}^2\bigg) \quad  \;\, v \in H^2(K).
	\end{eqnarray}
\end{subequations}
\item For positive constant $c_{inv}$ independent of $ K \in \mathcal{T}_h$ and $h$, the inverse inequality is given as follows (see, e.g., \cite[Section~4.5]{SCBrenner_LRScott_2008}):
\begin{eqnarray}\label{eq:inv}
|v|_{j,K} &\leq& c_{inv} \, h^{i-j} |v|_{i,K} \qquad \quad \forall v \in V_h, \qquad \; 0 \leq i \leq j \leq 2.
\end{eqnarray}
\end{itemize}

Lastly, we give discontinuous Galerkin approximation estimate for all $v\in H^2(K)$ for $ K\in \mathcal{T}_h$.
\begin{theorem} (\cite[Theorem 2.6]{BRiviere_2008a})
  Assume that $v\in H^2(K)$ for $K\in \mathcal{T}_h$ and  $\widetilde{v}  \in \mathbb{P}^\ell$. Then, there exists a constant $C$ independent of $v$ and $h$ such that
  \begin{equation} \label{approxfm}
    \lVert v- \widetilde{v} \rVert_{H^q(K)} \leq  C\,h^{\min(\ell+1,2)-q} \lvert v \rvert_{H^2(K)} \qquad 0\leq q \leq 2.
  \end{equation}
\end{theorem}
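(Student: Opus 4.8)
The plan is to prove this by the standard approximation-theory route: transplant the problem onto a fixed reference element, invoke the Bramble--Hilbert lemma there, and then scale the estimate back to $K$ while carefully tracking the powers of $h_K$. Shape-regularity of the family $\{\mathcal{T}_h\}_h$ is exactly what keeps all constants independent of $K$ and $h$.

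First I would fix a reference triangle $\widehat{K}$ and write the affine map onto $K$ as $F_K(\widehat{x})=B_K\widehat{x}+b_K$. Shape-regularity gives $\lVert B_K\rVert\le C h_K$, $\lVert B_K^{-1}\rVert\le C h_K^{-1}$, and $\lvert\det B_K\rvert\simeq h_K^{2}$, with $C$ depending only on the shape-regularity parameter. Setting $\widehat{w}:=w\circ F_K$, the chain rule together with the change of variables formula then yields, for $0\le q\le 2$ and $h_K\le 1$, the two-sided scaling estimates
\begin{equation*}
 c\,h_K^{\,q-1}\,\lvert w\rvert_{H^q(K)}\;\le\;\lvert \widehat{w}\rvert_{H^q(\widehat{K})}\;\le\;C\,h_K^{\,q-1}\,\lvert w\rvert_{H^q(K)},\qquad \lVert w\rVert_{H^q(K)}\;\le\;C\,h_K^{\,1-q}\,\lVert \widehat{w}\rVert_{H^q(\widehat{K})}.
\end{equation*}

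The heart of the argument is the reference-element approximation bound. I would set $m:=\min(\ell,1)$, so that $m+1=\min(\ell+1,2)$, and take $\widehat{\Pi}:H^{m+1}(\widehat{K})\to\mathbb{P}^m(\widehat{K})$ to be any bounded linear projection fixing $\mathbb{P}^m(\widehat{K})$ pointwise (e.g.\ the averaged Taylor polynomial of degree $m$, or the $L^2(\widehat{K})$-projection). Since $I-\widehat{\Pi}$ is bounded $H^{m+1}(\widehat{K})\to H^q(\widehat{K})$ for $0\le q\le m+1$ and annihilates $\mathbb{P}^m(\widehat{K})$, the Bramble--Hilbert lemma (see, e.g., \cite[Section~4.3]{SCBrenner_LRScott_2008}) delivers $\lVert \widehat{v}-\widehat{\Pi}\widehat{v}\rVert_{H^q(\widehat{K})}\le C\,\lvert \widehat{v}\rvert_{H^{m+1}(\widehat{K})}$ with $C=C(\widehat{K},m,q)$.

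To finish, I would choose the candidate $\widetilde{v}:=(\widehat{\Pi}\widehat{v})\circ F_K^{-1}\in\mathbb{P}^m(K)\subseteq\mathbb{P}^{\ell}(K)$, observe that $\widehat{v-\widetilde v}=\widehat v-\widehat\Pi\widehat v$, and chain the three estimates together:
\begin{equation*}
 \lVert v-\widetilde{v}\rVert_{H^q(K)}\;\le\;C\,h_K^{\,1-q}\,\lVert \widehat{v}-\widehat{\Pi}\widehat{v}\rVert_{H^q(\widehat{K})}\;\le\;C\,h_K^{\,1-q}\,\lvert \widehat{v}\rvert_{H^{m+1}(\widehat{K})}\;\le\;C\,h_K^{\,m+1-q}\,\lvert v\rvert_{H^{m+1}(K)}.
\end{equation*}
Since $m+1=\min(\ell+1,2)$ and $h_K\le h$ with $h$ bounded by $\operatorname{diam}\mathcal{D}$, and since in the relevant case $\ell\ge 1$ one has $m+1=2$ so that $\lvert v\rvert_{H^{m+1}(K)}=\lvert v\rvert_{H^2(K)}$, this is precisely the asserted bound (for $\ell=0$ the same computation gives the estimate with $\lvert v\rvert_{H^1(K)}$ on the right, a harmless strengthening of the hypotheses). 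The only genuinely delicate point is the bookkeeping of the $h_K$-powers in the two scaling steps and the systematic use of shape-regularity to keep every constant uniform in $K$ and $h$; beyond that the argument is entirely routine.
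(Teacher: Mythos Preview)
Your proof is correct and follows the standard textbook route (reference element, Bramble--Hilbert, scaling back via shape-regularity). Note, however, that the paper does not actually prove this theorem: it is quoted verbatim from Rivi\`{e}re's monograph \cite[Theorem~2.6]{BRiviere_2008a} and used as a black box in the subsequent analysis. The argument you have written is essentially the one given in that reference (and in \cite[Chapter~4]{SCBrenner_LRScott_2008}), so there is no methodological difference to compare.
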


Let $\widetilde{u} \in V_h  \otimes \mathcal{Y}_k^q $ is an approximation of the solution $u$. We derive an approximation for the tensor product $V_h \otimes \mathcal{Y}_k^q$, which is a direct application of the results for $V_h$ and $\mathcal{Y}_k^q$ as done in \cite{IBabuska_RTempone_GEZouraris_2004a,KLiu_BRiviere_2013}.

\begin{theorem}\label{thm:bestapp}
Assume that $v \in L^2(H^2(\mathcal{D});\Gamma)\cap H^{q+1}(H^1(\mathcal{D});\Gamma)$  and  $\widetilde{v} \in V_h \otimes \mathcal{Y}_k^q $.  Then, we have
\begin{subequations}
\begin{eqnarray}
	 \lVert \nabla (v- \widetilde{v} )\rVert_{L^2(L^2(\mathcal{D});\Gamma)}\hspace{-3mm} &\leq&\hspace{-3mm}Ch^{\min(\ell+1,2)-1} \lVert v \rVert_{L^2(H^2(\mathcal{D});\Gamma)} \nonumber\\
	 &&\hspace{-3mm}+\sum_{n=1}^{N}\bigg(\frac{k_n}{2}\bigg)^{q_n+1}\dfrac{\lVert \partial^{q_n+1}_{\xi_n}v \rVert_{L^2(H^1(\mathcal{D});\Gamma)} }{(q_n+1)!}, \label{ineq:grad}	
\end{eqnarray}
\begin{eqnarray}
	\lVert \nabla^2 (v- \widetilde{v} )\rVert_{L^2(L^2(\mathcal{D});\Gamma)} \hspace{-3mm}&\leq&\hspace{-3mm}Ch^{\min(\ell+1,2)-2} \lVert v \rVert_{L^2(H^2(\mathcal{D});\Gamma)}\nonumber\\
	&& \hspace{-3mm}+ Ch^{-1}\sum_{n=1}^{N}\bigg(\frac{k_n}{2}\bigg)^{q_n+1}\dfrac{\lVert \partial^{q_n+1}_{\xi_n}v \rVert_{L^2(H^1(\mathcal{D});\Gamma)} }{(q_n+1)!}, \label{ineq:grad2}
\end{eqnarray}
\end{subequations}
where the constant $C$ independent of $v$, $h$, and $k_n$.
\end{theorem}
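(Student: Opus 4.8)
The plan is to build $\widetilde v$ as the composition of two one--variable approximation operators and then to split the error so that each of \eqref{ineq:grad} and \eqref{ineq:grad2} reduces, term by term, to the scalar estimates \eqref{estr}, \eqref{approxfm}, and the inverse inequality \eqref{eq:inv}. Let $\mathcal P_h\colon L^2(\mathcal D)\to V_h$ be the elementwise (hence linear and local) approximation operator underlying \eqref{approxfm} --- e.g.\ the local $L^2$-- or $H^1$--projection onto $\mathbb P^\ell(K)$ --- and write $\Pi:=\Pi_n$ for the stochastic $L^2$--projection \eqref{eq:l2projc}. I would take $\widetilde v=\mathcal P_h\Pi v=\Pi\mathcal P_h v$; the two operators commute since they act on the independent variables $x$ and $\xi$. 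Throughout I would use two elementary facts: $\Pi$ commutes with every spatial derivative $\partial_x^\alpha$, and $\Pi$ (hence $I-\Pi$) is an $L^2(\Gamma;\rho)$--contraction slicewise in $x$, so $\lVert\partial_x^\alpha\Pi w\rVert_{L^2(L^2(\mathcal D);\Gamma)}\le\lVert\partial_x^\alpha w\rVert_{L^2(L^2(\mathcal D);\Gamma)}$; in particular $\lVert(I-\Pi)v\rVert_{L^2(H^2(\mathcal D);\Gamma)}\le\lVert v\rVert_{L^2(H^2(\mathcal D);\Gamma)}$, all spatial norms being understood in the broken sense.

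For \eqref{ineq:grad} I would write $v-\widetilde v=(I-\Pi)v+\Pi(I-\mathcal P_h)v$ and apply $\nabla$. The first piece equals $(I-\Pi)\nabla v$; since $v\in H^{q+1}(H^1(\mathcal D);\Gamma)$, the slice $\nabla v(x,\cdot)$ lies in $H^{q+1}(\Gamma)$ for a.e.\ $x$, so \eqref{estr} applies componentwise, and integrating in $x$ via Minkowski's integral inequality produces exactly the stochastic term on the right of \eqref{ineq:grad}. For the second piece, $\nabla\Pi(I-\mathcal P_h)v=\Pi\nabla(I-\mathcal P_h)v$; I would drop $\Pi$ by contractivity and bound $\lVert\nabla(I-\mathcal P_h)v(\cdot,\xi)\rVert_{L^2(\mathcal D)}$ elementwise by \eqref{approxfm} with $q=1$ (legitimate since $v(\cdot,\xi)\in H^2(\mathcal D)$ a.e.), then integrate over $\Gamma$ against $\rho$; this yields the term $Ch^{\min(\ell+1,2)-1}\lVert v\rVert_{L^2(H^2(\mathcal D);\Gamma)}$. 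Adding the two bounds gives \eqref{ineq:grad}.

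Estimate \eqref{ineq:grad2} is where I expect the only genuine obstacle: $\nabla^2 v$ is merely in $L^2(\Gamma)$, not in $H^{q+1}(\Gamma)$, so \eqref{estr} cannot be applied to it directly --- which is exactly why the factor $h^{-1}$ must appear. I would instead use the split $v-\widetilde v=(I-\mathcal P_h)v+\mathcal P_h(I-\Pi)v$. The first term is controlled slicewise in $\xi$ by \eqref{approxfm} with $q=2$, giving $Ch^{\min(\ell+1,2)-2}\lVert v\rVert_{L^2(H^2(\mathcal D);\Gamma)}$. For the second term I would use that $\mathcal P_h(I-\Pi)v(\cdot,\xi)\in V_h$, so the inverse inequality \eqref{eq:inv} with $i=1,\ j=2$ applies slicewise:
\[
\lVert\nabla^2\mathcal P_h(I-\Pi)v\rVert_{L^2(L^2(\mathcal D);\Gamma)}\le c_{inv}\,h^{-1}\,\lVert\nabla\mathcal P_h(I-\Pi)v\rVert_{L^2(L^2(\mathcal D);\Gamma)} .
\]
Writing $\mathcal P_h(I-\Pi)v=(I-\Pi)v-(I-\mathcal P_h)(I-\Pi)v$, I would bound the gradient of the first summand by \eqref{estr} exactly as above (its $h^{-1}$--multiple is the stochastic term of \eqref{ineq:grad2}) and the gradient of the second by \eqref{approxfm} with $q=1$ together with $\lVert(I-\Pi)v\rVert_{L^2(H^2(\mathcal D);\Gamma)}\le\lVert v\rVert_{L^2(H^2(\mathcal D);\Gamma)}$, which contributes another $Ch^{\min(\ell+1,2)-2}\lVert v\rVert_{L^2(H^2(\mathcal D);\Gamma)}$ absorbed into the first term. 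Collecting terms and renaming constants gives \eqref{ineq:grad2}. The remaining points --- linearity and locality of the operator in \eqref{approxfm}, the broken interpretation of the $H^k(\mathcal D)$--norms, and the use of Minkowski's inequality to move the finite sum over $n$ outside the norms --- are routine bookkeeping.
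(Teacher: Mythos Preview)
Your argument is correct and is essentially the paper's own: build $\widetilde v$ by composing a spatial approximation operator with the stochastic $L^2$--projection $\Pi_n$, split the error by the triangle inequality, and apply \eqref{approxfm}, \eqref{estr}, and \eqref{eq:inv} to the respective pieces. The only differences are cosmetic --- the paper uses specifically the $H^1$--projection $\mathcal R_h$ of \eqref{eq:Hproj} (so it can invoke the $H^1$--stability of $\mathcal R_h$ directly in the step where you instead do one extra add--subtract of $(I-\mathcal P_h)(I-\Pi)v$), and it keeps the same spatial--first split $v-\mathcal R_h v+\mathcal R_h(v-\Pi_n v)$ for both estimates rather than swapping the order as you do for \eqref{ineq:grad}.
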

\begin{proof}
We refer to \ref{app:A} for the  proof of Theorem~\ref{thm:bestapp}.
\end{proof}

The next step is to use Theorem~\ref{thm:bestapp} together with the approximation estimate \eqref{approxfm} to derive an upper bound for the error in the energy norm.

\begin{theorem}\label{thm:main}
Assume $u  \in L^2(H^2(\mathcal{D});\Gamma) \cap H^{q+1}(H^1(\mathcal{D});\Gamma)$ and  $u_h \in V_h \otimes \mathcal{Y}_k^q$.
Then, there is a constant $C$ independent of $u, h$, and $k_n$ such that
\begin{eqnarray}
\lVert u-u_h\rVert_{\xi} &\leq&  C \left(  h^{\min(\ell+1,2)-1} \lVert u \rVert_{L^2(H^2(\mathcal{D});\Gamma)} \right.\nonumber\\
	 	&& \left.+\sum_{n=1}^{N}\bigg(\frac{k_n}{2}\bigg)^{q_n+1}\dfrac{\lVert \partial^{q_n+1}_{\xi_n}u \rVert_{L^2(H^1(\mathcal{D});\Gamma)} }{(q_n+1)!}\right).
\end{eqnarray}
\end{theorem}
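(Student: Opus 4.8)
The plan is to follow the classical quasi\,--optimality argument for symmetric interior penalty DG methods, carried out in the tensor\,--product space $V_h\otimes\mathcal{Y}_k^q$, and then to substitute the best\,--approximation bounds of Theorem~\ref{thm:bestapp}. Pick $\widetilde{u}\in V_h\otimes\mathcal{Y}_k^q$ to be the tensor projection built from the operators in \eqref{eq:l2projc} and \eqref{eq:Hprojc}--\eqref{eq:Hproj} (any quasi\,--interpolant attaining the rates of Theorem~\ref{thm:bestapp} will do), and split the error as $u-u_h=(u-\widetilde{u})+(\widetilde{u}-u_h)=:\eta+\theta$. Since $u\in L^2(H^2(\mathcal{D});\Gamma)$ and $\widetilde{u}$ is continuous in $x$, the jumps of $\eta$ across interior edges vanish, so $\lVert\eta\rVert_{\xi}$ collapses to the volume gradient term together with the boundary jump and boundary convection contributions; each of these is majorised, via the trace inequality \eqref{eq:trace}, the uniform bound \eqref{abound}, $\mathbf{b}\in(L^{\infty}(\overline{\mathcal{D}}))^2$, and the approximation estimate \eqref{approxfm}, by the right\,--hand side of the theorem after invoking \eqref{ineq:grad}--\eqref{ineq:grad2}. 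It therefore remains to estimate the discrete part $\lVert\theta\rVert_{\xi}$.

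For $\theta$ I would combine coercivity with Galerkin orthogonality. Because $u$ is regular enough that the SIPG forms are consistent (see, e.g., \cite{BRiviere_2008a}), $a_{\xi}(u-u_h,v)=0$ for all $v\in V_h\otimes\mathcal{Y}_k^q$; taking $v=\theta$ and using \eqref{coer} yields
\[
c_{cv}\,\lVert\theta\rVert_{\xi}^{2}\;\le\;a_{\xi}(\theta,\theta)\;=\;a_{\xi}(\widetilde{u}-u,\theta)\;=\;-\,a_{\xi}(\eta,\theta).
\]
Note that the continuity estimate \eqref{cont} cannot be applied directly to $a_{\xi}(\eta,\theta)$, since $\eta\notin V_h\otimes\mathcal{Y}_n$; instead one needs a strengthened bound against an augmented energy norm.

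Thus I would estimate $a_{\xi}(\eta,\theta)$ term by term in the definition of $a_h(\cdot,\cdot,\xi)$, uniformly in $\xi$ using \eqref{abound} and $\mathbf{b}\in(L^{\infty})^2$, and only afterwards integrate against $\rho(\xi)\,d\xi$ and apply Cauchy--Schwarz in $L^2(\Gamma)$ to pass to $\lVert\cdot\rVert_{\xi}$. The volume diffusion term is controlled by Cauchy--Schwarz and $a_{\max}$; the two interior\,--penalty edge terms are handled with the trace inequality \eqref{eq:trace} applied to $\nabla\eta$ (legitimate because $u\in H^2$) and the inverse inequality \eqref{eq:inv} applied to $\nabla\theta$, followed by Young's inequality to absorb the arising $\sigma h_E^{-1}\lVert\jump{\theta}\rVert_{E}^{2}$ and $a\lVert\nabla\theta\rVert_{L^2(K)}^{2}$ into the left\,--hand side (this is where $\sigma$ must be taken large, independently of $h$ and $\xi$). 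The convection volume term is rewritten using $\nabla\cdot\mathbf{b}=0$ and integration by parts over each $K$, yielding volume and edge contributions bounded by $\lVert\nabla\theta\rVert_{L^2(K)}$ and by the upwind quantities $\tfrac12\int_E\mathbf{b}\cdot\mathbf{n}_E(\theta^e-\theta)^2$, $\tfrac12\int_E\mathbf{b}\cdot\mathbf{n}_E\theta^2$ present in the energy norm; the upwind terms carrying $\eta$ are matched against the same quantities by Cauchy--Schwarz. Collecting everything gives
\[
a_{\xi}(\eta,\theta)\;\le\;C\,\Big(\lVert\eta\rVert_{\xi}^{2}+\sum_{K\in\mathcal{T}_h}h_K\lVert\nabla\eta\rVert_{L^2(\partial K)}^{2}\Big)^{1/2}\lVert\theta\rVert_{\xi},
\]
so that, after cancelling one factor of $\lVert\theta\rVert_{\xi}$, $\lVert\theta\rVert_{\xi}$ is bounded by the same augmented seminorm of $\eta$.

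It then remains to control that augmented seminorm: by the trace inequality \eqref{eq:trace}, $\sum_{K}h_K\lVert\nabla\eta\rVert_{L^2(\partial K)}^{2}\le c_{tr}\big(h\,\lVert\nabla\eta\rVert_{L^2(L^2(\mathcal{D});\Gamma)}^{2}+h^{2}\lVert\nabla^{2}\eta\rVert_{L^2(L^2(\mathcal{D});\Gamma)}^{2}\big)$, and the $h^{-1}$ loss in the stochastic part of \eqref{ineq:grad2} is exactly compensated by the $h^{2}$ weight here, so both terms are of order $h^{\min(\ell+1,2)-1}\lVert u\rVert_{L^2(H^2(\mathcal{D});\Gamma)}+\sum_{n}(k_n/2)^{q_n+1}\lVert\partial_{\xi_n}^{q_n+1}u\rVert_{L^2(H^1(\mathcal{D});\Gamma)}/(q_n+1)!$, upon invoking \eqref{ineq:grad}, \eqref{ineq:grad2} and \eqref{approxfm}. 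Combining this with the bound on $\lVert\eta\rVert_{\xi}$ and the triangle inequality gives the claim. The main obstacle, as in the deterministic SIPG theory, is the careful book\,--keeping of the edge terms — matching the mesh powers produced by the trace and inverse inequalities against the $h^{-1}$ factor in \eqref{ineq:grad2}, keeping the penalty parameter $\sigma$ in the coercive range, and ensuring every constant is independent of the stochastic variable by using only \eqref{abound} and $\mathbf{b}\in(L^{\infty})^2$ before integrating over $\Gamma$.
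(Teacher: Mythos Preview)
Your proposal is correct and follows essentially the same route as the paper's proof in Appendix~B: the same splitting $u-u_h=\eta+\theta$ with $\widetilde u=\Pi_n(\mathcal R_h u)$, the same coercivity--plus--Galerkin--orthogonality step for $\theta$, the same term-by-term estimation of $a_\xi(\eta,\theta)$ via Cauchy--Schwarz, Young, trace and inverse inequalities, and the same appeal to Theorem~\ref{thm:bestapp} at the end. The differences are purely organisational: the paper does not exploit the continuity of $\mathcal R_h u$ and instead bounds the interior jumps of $\eta$ via the trace inequality; and it absorbs the $\lVert\theta\rVert_\xi$ contributions by seven separate applications of Young's inequality (each producing $\tfrac{c_{cv}}{8}\lVert\chi\rVert_\xi^2$) rather than collecting them into a single augmented-norm bound as you do. One small omission: your augmented seminorm should also carry an $\lVert\eta\rVert_{L^2(L^2(\mathcal D);\Gamma)}$ term, since after integrating the convection term by parts the volume and edge contributions (the paper's $T_5$--$T_7$) are controlled by $\lVert\eta\rVert_{L^2(K)}$ and its traces, not by $\lVert\nabla\eta\rVert$; this is harmless because \eqref{approxfm} with $q=0$ together with \eqref{estr} supplies the required rate.
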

\begin{proof}
We refer to \ref{app:B} for the  proof of Theorem~\ref{thm:main}.
\end{proof}

In practical implementations such as transport phenomena in random media, the length $N$ of the random vector $\xi$ can be  large, especially for the small correlation length in the covariance function of the random input. This increases the value of  multivariate stochastic basis polynomials $P$ quite fast, called as the curse of dimensionality. In the following section, we break the curse of dimensionality  by using low--rank  approximation, which reduces both the storage requirements and the computational complexity by exploiting a Kronecker--product structure of system matrices  defined in \eqref{tensormtrx}.

%%%%%%%%%%%%%%%%%%%%%%%%%%%%%%%%%%%%%%%%%%%%%%%%%%%%%%%%%%%%%%%%%%%%%%%%%%%%%%%%%%%%%%%%%%%%%%%%%%%%%%%%%%%%%%%%%%%%
%%%%%%%%%%%%%%%%%%%%%%%%%%%%%%%%%%%%%%%%%%%%%%%%%%%%%%%%%%%%%%%%%%%%%%%%%%%%%%%%%%%%%%%%%%%%%%%%%%%%%%%%%%%%%%%%%%%%

\section{Low--rank approximation}\label{sec:lowrank}

In this section, we develop efficient Krylov subspace solvers with suitable preconditioners where the solution is approximated using a low--rank representation in order to reduce memory requirements and computational effort. Basic operations associated with the low-rank format are much cheaper, and as Krylov subspace method converges, it constructs a sequence of low--rank approximations to the solution of the system.

We begin with the basic notation related to Kronecker products and low-rank approach. Let $\mathbf{u}=[u_1^T,\ldots,u_P^T]^T\in\mathbb{R}^{N_dP}$ with each $u_i$ of length $N_d$ and $\mathbf{U}=[u_1,\ldots,u_P]\in \mathbb{R}^{N_d \times P}$ where $N_d$ and $P$ are the degrees of freedom for the spatial discretization and the total degree of the multivariate stochastic basis polynomials, respectively. Then, we define isomorphic mappings between $\mathbb{R}^{N_dP}$ and $\mathbb{R}^{N_d \times P}$  as following
\[
\texttt{vec}:\mathbb{R}^{N_d \times P} \rightarrow \mathbb{R}^{N_dP},  \qquad \texttt{mat}: \mathbb{R}^{N_dP} \rightarrow \mathbb{R}^{N_d\times P}
\]
determined by  the operators \texttt{vec}($ \cdot $) and \texttt{mat}($ \cdot $), respectively. The matrix inner product is defined by $\ang{U,V}_F=\text{trace}(U^TV)$ so that the induced norm is $\|U\|_F = \sqrt{\ang{U,V}_F}$. For the sake of simplicity, we will omit the subscript in $\|\cdot\|_F$ and write only $\|\cdot\|$. Further, we have the following properties, see, e.g., \cite{DKressner_CTobler_2011}:
\begin{eqnarray*}
   \texttt{vec}(A\mathbf{U}B)=(B^T\otimes A)\texttt{vec}(\mathbf{U}),\qquad    (A\otimes B)(C\otimes D)= AC \otimes BD.
\end{eqnarray*}
Now, we can interpret  the system \eqref{tensormtrx}  as $ \mathbf{\mathcal{A}}(U) = \mathcal{F} $ for the matrix $ U \in
\mathbb{R}^{N_d\times P}$ with $\mathbf{u} = \text{vec}(\mathbf{U}) $, where $ \mathbf{\mathcal{A}}(\mathbf{U}) $ is defined as the linear operator satisfying $ \text{vec}(\mathbf{\mathcal{A}}(\mathbf{U})) = \mathbf{\mathcal{A}}\text{vec}(\mathbf{U}) $. Assuming low--rank decomposition of $\mathbf{U}= WV^T$ with
\begin{eqnarray*}
 W = [w_1,\ldots,w_k] \in \mathbb{R}^{N_d\times r}, \quad
 V = [v_1,\ldots,v_k] \in \mathbb{R}^{P\times r}, \quad r \ll N_d, P
\end{eqnarray*}
and
\begin{eqnarray*}
 \texttt{vec}(\mathbf{U}) = \texttt{vec} \Bigg( \sum_{i=1}^{r} w_i v_i^T \Bigg)  =  \sum_{i=1}^{r} v_i \otimes w_i,
\end{eqnarray*}
we have
\begin{eqnarray*}
  \mathbf{\mathcal{A}}\texttt{vec}(\mathbf{U}) & = & \Bigg( \sum_{k=0}^{N}\mathcal{G}_k\otimes \mathcal{K}_k \Bigg) \Bigg( \sum_{i=1}^{r} v_i \otimes w_i \Bigg) \\
  & = &   \sum_{k=0}^{N} \sum_{i=1}^{r} (\mathcal{G}_kv_i) \otimes  (\mathcal{K}_k w_i) \in \mathbb{R}^{N_d P}.
\end{eqnarray*}
This implies
\begin{eqnarray*}
   \mathbf{\mathcal{A}}(\mathbf{U}) :=  \texttt{mat}(\mathbf{\mathcal{A}}\texttt{vec}(\mathbf{U})) \in \mathbb{R}^{N_d\times P}.
\end{eqnarray*}

\begin{algorithm}[H]
    \scriptsize
	\caption{Low--rank preconditioned BiCGstab (LRPBiCGstab)}
	\label{alg:BiCGstab}
	\hspace*{\algorithmicindent}\textbf{Input:} Matrix functions $\mathcal{A}, \mathcal{P}: \mathbb{R}^{N_d\times P} \rightarrow \mathbb{R}^{N_d\times P}$, right--hand side
     $\mathcal{F}$ in  low--rank format. Truncation operator $\mathcal{T}$ w.r.t. given tolerance $\epsilon_{trunc}$.\\
	\hspace*{\algorithmicindent}\textbf{Output:} Matrix $\mathbf{U} \in \mathbb{R}^{N_d\times P}$ satisfying  $\|\mathcal{A}(\mathbf{U})-\mathcal{F}\| \leq \epsilon_{tol}$.\\
	\vspace{-5mm}
	\begin{algorithmic}[1]
		\STATE{$\mathbf{U}_0=0$, $R_0=\mathcal{F}$, $\widetilde{R}=\mathcal{F}$, $\rho_0=\langle \widetilde{R},R_0\rangle$, $S_0=R_0$, $\widetilde{S}_0=\mathcal{P}^{-1}(S_0)$, $V_0=\mathcal{A}(\widetilde{S}_0)$, $k=0$}
		\WHILE{$\lVert R_k\rVert>\epsilon_{tol}$}
		\STATE{$\omega_k=\langle \widetilde{R},R_k\rangle /\langle \widetilde{R},V_k\rangle$}
		\STATE{$Z_k=R_k-\omega_k V_k$, \hfill $Z_k\leftarrow \mathcal{T}(Z_k) $}
		\STATE{$\widetilde{Z}_k=\mathcal{P}^{-1}(Z_k)$, \hfill $\widetilde{Z}_k\leftarrow \mathcal{T}(\widetilde{Z}_k) $}
		\STATE{$T_k=\mathcal{A}(\widetilde{Z}_k)$, \hfill $T_k\leftarrow \mathcal{T}(T_k) $}
		\IF{$\lVert Z_k\rVert\leq \epsilon_{tol}$}
		\STATE {$\mathbf{U}=\mathbf{U}_{k} +\omega_k\widetilde{S}_k$}
		\RETURN
		\ENDIF
		\STATE{$\xi_{k}=\langle T_{k},Z_{k}\rangle/\langle T_{k},T_{k}\rangle$}
		\STATE{$\mathbf{U}_{k+1}=\mathbf{U}_{k}+\omega_k \widetilde{S}_k + \xi_{k}\widetilde{Z}_k$, \hfill $\mathbf{U}_{k+1}\leftarrow \mathcal{T}(\mathbf{U}_{k+1}^n) $}
%		\STATE{\textit{Variant 1:} $ R_{k+1}=Z_k-\xi_k T_k$, \hspace{20mm} $R_{k+1}\leftarrow \mathcal{T}(R_{k+1}) $ }
		\STATE{$ R_{k+1} = \mathcal{F}- \mathcal{A}(\mathbf{U}_{k+1})$, \hfill $R_{k+1}\leftarrow \mathcal{T}(R_{k+1}) $ }
		\IF{$\lVert R_{k+1}\rVert \leq \epsilon_{tol}$}
		\STATE {$\mathbf{U}=\mathbf{U}_{k+1}$}
		\RETURN
		\ENDIF
		\STATE{$\rho_{k+1}=\langle \widetilde{R},R_{k+1}\rangle$}
		\STATE{$\beta_{k}=\frac{\rho_{k+1}}{\rho_{k}} \frac{\omega_k}{\xi_{k}}$}
		\STATE{$S_{k+1}=R_{k+1}+\beta_{k}(S_k-\xi_{k}V_k)$, \hfill $S_{k+1}\leftarrow \mathcal{T}(S_{k+1}) $}				
		\STATE{$\widetilde{S}_{k+1}=\mathcal{P}^{-1}(S_{k+1})$, \hfill $\widetilde{S}_{k+1}\leftarrow \mathcal{T}(\widetilde{S}_{k+1}) $}
		\STATE{$V_{k+1}=\mathcal{A}(\widetilde{S}_{k+1})$, \hfill $V_{k+1}\leftarrow \mathcal{T}(V_{k+1}) $}
		\STATE{$k=k+1$}
		\ENDWHILE
	\end{algorithmic}
\end{algorithm}
It is noted that in this study, we do not discuss the existence of the low--rank approximation. We refer to \cite{PBenner_AOnwunto_MStoll_2015,MAFreitag_DLHGreen_2018} and references therein.

We  here apply a variant of  Krylov subspace solvers, namely, conjugate gradient (CG) method \cite{MRHestenes_EStiefel_1952}, bi--conjugate gradient stabilized (BiCGstab) \cite{HAvanderVorst_1992}, quasi--minimal residual variant of the bi--conjugate gradient stabilized (QMRCGstab) method  \cite{TFChan_EGallopoulos_VSimoncini_TSzeto_CHTong_1992}, and generalized minimal residual (GMRES) \cite{YSaad_MHSchultz_1986} based on low--rank approximation, where the advantage is taken of the Kronecker product of the matrix $ \mathbf{\mathcal{A}}$. Algorithms~\ref{alg:BiCGstab}, ~\ref{alg:QMRCGstab}, and ~\ref{alg:GMRES} show a low--rank implementation of the classical left preconditioned BiCGstab, QMRCGstab, and GMRES methods, respectively.  We refer to \cite[Algorithm~1]{PBenner_AOnwunto_MStoll_2015} for low--rank variant of CG method. In principle, the low--rank truncation steps can affect the convergence of the Krylov method and the well--established properties of Krylov subspace  may no longer hold. Therefore, in the implementations, we  use a rather small truncation tolerance $\epsilon_{trunc}$ to try to maintain a very accurate representation of what the full--rank representation would like.

At each iteration step of the algorithm, we perform  truncation operators $\mathcal{T}$ and these operations substantially influence the overall solution procedure. The reason why we need to apply these operations is that  the rank of  low-rank factors can increase either via matrix vector products or vector (matrix) additions. Thus, rank--reduction techniques are required to keep costs under control, such as truncation based on singular values \cite{DKressner_CTobler_2011} or truncation based on coarse--grid rank reduction \cite{KLee_HCElman_2017}. In this paper, following the discussion in  \cite{MStoll_TBreiten_2015,PBenner_AOnwunto_MStoll_2015}, a more economical alternative  could be possible to compute singular values  a truncated SVD of  $ U= W^TV \approx B \diag(\sigma_{1},\dots,\sigma_{r})C^T$  associated to the $r$ singular values that are larger than the given truncation threshold. In this way, we obtain the new low--rank representation $U \approx \widetilde{U} \widetilde{V}^T$ by keeping both the rank of  low-rank factor and cost under control.

The  inner product computations in the iterative algorithms can  be done easily by applying the following strategy:
\[
\ang{Y,Z}=\texttt{vec}(Y)^T\texttt{vec}(Z)=\text{trace}(Y^TZ)
\]
for the low-rank matrices
\begin{eqnarray*}
	Y &=& W_Y V_Y^T \quad W_Y \in \mathbb{R}^{N_d\times r_y}, \; V_Y \in \mathbb{R}^{P\times r_y},  \\
	Z &=& W_Z V_Z^T \quad W_Z \in \mathbb{R}^{N_d\times r_Z}, \; V_Z \in \mathbb{R}^{P\times r_Z}.
\end{eqnarray*}
Then, one can easily show that
\[
\text{trace}(Y^TZ) = \text{trace} \Bigg( (W_YV_Y^T)^T (W_ZV_Z^T)\Bigg) = \text{trace}\Bigg( (V_Z^TV_Y) (W_Y^TW_Z)\Bigg)
\]
allows us to compute the trace of small matrices rather than of the ones from the full discretization.

\begin{algorithm}[H]
    \scriptsize
	\caption{Low--rank preconditioned QMRCGstab (LRPQMRCGstab)}
	\label{alg:QMRCGstab}
	\hspace*{\algorithmicindent}\textbf{Input:} Matrix functions $\mathcal{A}, \mathcal{P}: \mathbb{R}^{N_d\times P} \rightarrow \mathbb{R}^{N_d\times P}$, right--hand side $\mathcal{F}$ in  low--rank format. Truncation operator $\mathcal{T}$ w.r.t. given tolerance $\epsilon_{trunc}$.\\
	\hspace*{\algorithmicindent}\textbf{Output:} Matrix $\mathbf{U} \in \mathbb{R}^{N_d\times P}$ satisfying  $\lVert\mathcal{A}(\mathbf{U})-\mathcal{F}\rVert \leq \epsilon_{tol}$.\\
	\vspace{-5mm}
	\begin{algorithmic}[1]
		\STATE{$R_0=\mathcal{F} -\mathcal{A}(\mathbf{U}_0)$, for some initial guess $ \mathbf{U}_0 $.}
		\STATE{$Z_0=\mathcal{P}^{-1}(R_0)$}
		\STATE{Choose $\widetilde{R}_0$ such that $\ang{Z_0,\widetilde{R}_0}\neq 0$ (for example, $\widetilde{R}_0=R_0$).}
		\STATE{$ Y_0=V_0=D_0=0 $}
		\STATE{$ \rho_0=\alpha_0=\omega_0=1, \tau_0=\lVert Z_0\rVert_F, \theta_0=0, \eta_0=0, k =0$}				
		\WHILE{$ \sqrt{k+1}\lvert\widetilde{\tau}\rvert/\lVert R_0\rVert>\epsilon_{tol}$}
		\STATE{$\rho_{k+1}=\ang{Z_k, \widetilde{R}_0}$,\; $\beta_{k+1}=\frac{\rho_{k+1}}{\rho_{k}} \frac{\alpha_{k}}{\omega_{k}} $}	
		\STATE{$Y_{k+1}=Z_k + \beta_{k+1}(Y_k-\omega_{k}V_k)$, \hfill $Y_{k+1}\leftarrow \mathcal{T}(Y_{k+1}) $}
		\STATE{$\widetilde{Y}_{k+1}=\mathcal{A}(Y_{k+1})$, \hfill $\widetilde{Y}_{k+1}\leftarrow \mathcal{T}(\widetilde{Y}_{k+1}) $}	
		\IF{$\lVert \widetilde{Y}_{k+1}\rVert\leq \epsilon_{tol}$}
		\STATE {$\mathbf{U}=\mathbf{U}_{k}$}
		\RETURN
		\ENDIF
		\STATE{$V_{k+1}=\mathcal{P}^{-1}(\widetilde{Y}_{k+1})$, \hfill $V_{k+1}\leftarrow \mathcal{T}(V_{k+1}) $}
		\STATE{$\alpha_{k+1}=\rho_{k+1}/\ang{V_{k+1},\widetilde{R}_0}$}
		\STATE{$S_{k+1}=Z_k-\alpha_{k+1}V_{k+1}$, \hfill $S_{k+1}\leftarrow \mathcal{T}(S_{k+1}) $}
        \STATE{$\widetilde{\tau}=\tau \widetilde{\theta}_{k+1}c, \widetilde{\eta}_{k+1}=c^2\alpha_{k+1}$}	
        \STATE{$\widetilde{D}_{k+1}=Y_{k+1}+\dfrac{\theta_{k}^2\eta_{k}}{\alpha_{k+1}}D_k$, \hfill $\widetilde{D}_{k+1}\leftarrow \mathcal{T}(\widetilde{D}_{k+1}) $}
        \STATE{$\widetilde{\mathbf{U}}_{k+1}=\mathbf{U}_k+\widetilde{\eta}_{k+1}\widetilde{D}_{k+1}$, \hfill $\widetilde{\mathbf{U}}_{k+1}\leftarrow \mathcal{T}(\widetilde{\mathbf{U}}_{k+1}) $}	
		\STATE{$\widetilde{S}_{k+1}=\mathcal{A}(S_{k+1})$, \hfill $\widetilde{S}_{k+1}\leftarrow \mathcal{T}(\widetilde{S}_{k+1})$}
		\STATE{$T_{k+1}=\mathcal{P}^{-1}(\widetilde{S}_{k+1})$, \hfill $T_{k+1}\leftarrow \mathcal{T}(T_{k+1}) $}
		\STATE{$\omega_{k+1}= \ang{S_{k+1},T_{k+1}}/\ang{T_{k+1},T_{k+1}} $} 		
		\STATE{$Z_{k+1}=S_{k+1}-\omega_{k+1}T_{k+1} $} 			
	    \STATE{$\theta_{k+1}=\lVert Z_{k+1}\rVert/\widetilde{\tau},\; c=\dfrac{1}{\sqrt{1+\theta_{k+1}^2}}$}	
	    \STATE{$\tau=\widetilde{\tau} \theta_{k+1}c, \eta_{k+1}=c^2\omega_{k+1}$}	
		\STATE{$D_{k+1}=S_{k+1}+\dfrac{\widetilde{\theta}_{k+1}^2\widetilde{\eta}_{k+1}}{\omega_{k+1}}\widetilde{D}_{k+1}$, \hfill $D_{k+1}\leftarrow \mathcal{T}(D_{k+1}) $}
		\STATE{$\mathbf{U}_{k+1}=\widetilde{\mathbf{U}}_{k+1}+\eta_{k+1}D_{k+1}$, \hfill $\mathbf{U}_{k+1}\leftarrow \mathcal{T}(\mathbf{U}_{k+1}) $}	
		\STATE{$k=k+1$}
		\ENDWHILE
		\STATE{$\mathbf{U}=\mathbf{U}_{k}$}
	\end{algorithmic}
\end{algorithm}
It is well--known that Krylov subspace methods require preconditioning in order to obtain a fast convergence in terms of the number of iterations and low--rank Krylov methods have no exception. However, the precondition operator must not dramatically increase the memory requirements of the solution process, while it reduces the number of iterations at a reasonable computational cost. We present here the well--known preconditioners:
\begin{algorithm}[H]
    \scriptsize
	\caption{Low--rank  preconditioned GMRES (LRPGMRES)}
	\label{alg:GMRES}
	\hspace*{\algorithmicindent}{\textbf{Input:} Matrix functions $\mathcal{A}, \mathcal{P}: \mathbb{R}^{N_d\times P} \rightarrow \mathbb{R}^{N_d\times P}$, right--hand side $\mathcal{F}$ in low--rank format. Truncation operator $\mathcal{T}$ w.r.t. given tolerance $\epsilon_{trunc}$.}\\
	\hspace*{\algorithmicindent}{\textbf{Output:} Matrix $\mathbf{U} \in \mathbb{R}^{N_d\times P}$ satisfying $\lVert \mathcal{A}(\mathbf{U})-\mathcal{F} \rVert \leq \epsilon_{tol}$.}\\
	\vspace{-5mm}
	\begin{algorithmic}[1]
		\STATE{$R_0=\mathcal{F} -\mathcal{A}(\mathbf{U}_0)$, for some initial guess $ \mathbf{U}_0$.}
		\STATE{$V_1=R_0/\lVert R_0\rVert$}
		\STATE{$\xi=[\xi_1,0,\ldots,0]$, \qquad  $\xi_1=\lVert V_1\rVert$}		
        \FOR{$ k=1,\ldots,\text{maxit}$}
        \STATE{$Z_{k}=\mathcal{P}^{-1}(V_{k})$, \hfill $Z_{k}\leftarrow \mathcal{T}(Z_{k}) $}
        \STATE{$W=\mathcal{A}(Z_{k})$, \hfill $W\leftarrow \mathcal{T}(W) $}
        \FOR{$ i=1,\ldots,k$}
        \STATE{$h_{i,k}=\ang{W,V_i}$}
        \STATE{$W=W-h_{i,k}V_i$, \hfill $W\leftarrow \mathcal{T}(W) $}	
        \ENDFOR
        \STATE{$h_{k+1,k}=\lVert W\rVert$}
        \STATE{$V_{k+1}=W/h_{k+1,k}$}
        \STATE{Apply Givens rotations to kth column of $h$, i.e.,}
        \FOR{$ i=1,\ldots,k-1$}
        \STATE{$\left[ \begin{matrix}
        	h_{i,k}\\
        	h_{i+1,k}
        	\end{matrix}\right] =
        	\left[ \begin{matrix}
        	c_i & s_i\\
        	-s_i & c_i
        	\end{matrix}\right]\left[ \begin{matrix}
        	h_{i,k}\\
        	h_{i+1,k}
        	\end{matrix}\right]  $}
        \ENDFOR
        \STATE{Compute kth rotation, and apply to $\xi$ and last column of $h$.\\
        $\left[ \begin{matrix}
        h_{i,k}\\
        h_{i+1,k}
        \end{matrix}\right] =
        \left[ \begin{matrix}
        c_i & s_i\\
        -s_i & c_i
        \end{matrix}\right]\left[ \begin{matrix}
        h_{i,k}\\
        h_{i+1,k}
        \end{matrix}\right] $}
        \STATE{$ h_{k,k}=c_kh_{k,k}+s_kh_{k+1,k} $, \qquad  $h_{k+1,k}=0$}
        \IF{$ \lvert\xi_{k+1}\rvert $ sufficiently small}
        \STATE{Solve $Hy=\xi$, where the entries of $H$ are $h_{j,k}$.}
        \STATE{$Y=[y_1V_1,\ldots,y_kV_k]$, \hfill $Y\leftarrow \mathcal{T}(Y) $}
        \STATE{$\widetilde{Y}=\mathcal{P}^{-1}(Y)$, \hfill $\widetilde{Y}\leftarrow \mathcal{T}(\widetilde{Y}) $}        	
        \STATE{$\mathbf{U}=\mathbf{U}_{0}+\widetilde{Y}$, \hfill $\mathbf{U}^n\leftarrow \mathcal{T}(\mathbf{U})$}
        \RETURN
        \ENDIF      	
        \ENDFOR
	\end{algorithmic}
\end{algorithm}
\begin{itemize}
\item[i)] Mean-based preconditioner
\begin{equation*}
\mathcal{P}_0=\mathcal{G}_0 \otimes \mathcal{K}_0
\end{equation*}
is one of the most commonly used preconditioners for solving PDEs with random data, see, e.g., \cite{CEPowell_HCElman_2009,RGGhanem_RMKruger_1996}. One can easily observe that $\mathcal{P}_0$ is block diagonal matrix since $\mathcal{G}_0$ is a diagonal matrix due to the orthogonality of the stochastic basis functions $\psi_i$.

\item[ii)] Ullmann preconditioner, which is of the form
\begin{equation*}
\mathcal{P}_1=\underbrace{\mathcal{G}_0\otimes\mathcal{K}_0}_{:=\mathcal{P}_0} + \sum_{k=1}^{N} \dfrac{\text{trace}(\mathcal{K}_k^T \mathcal{K}_0)}{\text{trace}(\mathcal{K}_0^T\mathcal{K}_0)} \mathcal{G}_k\otimes \mathcal{K}_0,
\end{equation*}
can be considered as a modified version of $\mathcal{P}_0$, see, e.g., \cite{EUllmann_2010}. One of the advantages of this preconditioner is keeping the structure of the coefficient matrix, which in this case, sparsity pattern. Moreover, unlike the mean--based preconditioner, it uses the whole information in the coefficient matrix. However, this advantage causes $ \mathcal{P}_1$ being more expensive since it is not block diagonal anymore.
\end{itemize}

\section{Unsteady model problem with random coefficients}\label{sec:unsteady}

In this section, we extend our discussion to  unsteady convection diffusion equation with random coefficients: find  $u: \overline{\mathcal{D}} \times \Omega \times [0,T] \rightarrow \mathbb{R}$ such that $\mathbb{P}$-almost surely in $\Omega$
\begin{subequations}\label{eqn:unsteady}
	\begin{eqnarray}
	\dfrac{\partial u(x,\omega,t)}{\partial t}-\nabla\cdot(a(x,\omega)\nabla u(x,\omega,t))\nonumber\\
	+ \mathbf{b}(x,\omega)\cdot \nabla u(x,\omega,t)  & = & f(x,t), \; \; \hbox{  in} \;\; \mathcal{D} \times \Omega \times (0,T], \\
	u(x,\omega,t) & = & 0, \qquad \quad  \;  \hbox{on} \;\; \partial \mathcal{D} \times \Omega \times [0,T],\\
	u(x,\omega,0) & = & u^0(x), \quad \;\;  \hbox{in} \;\; \mathcal{D} \times \Omega,
	\end{eqnarray}
\end{subequations}
where $u^0(x) \in L^2(\mathcal{D})$ corresponds to deterministic initial condition.

By following the methodologies introduced for the stationary problem in Section~\ref{sec:model} and backward Euler method in temporal space with the uniform time step  $\Delta t=T/N$, we obtain the following system of ordinary equations with block structure:
\begin{eqnarray*}
\big(\mathcal{G}_0\otimes M\big) \bigg(\dfrac{\mathbf{u}^{n+1}-\mathbf{u}^{n}}{\Delta t}\bigg)  + \bigg(\sum_{k=0}^{N}\mathcal{G}_k\otimes \mathcal{K}_k\bigg) u^{n+1}= \bigg(g_0 \otimes f_0\bigg)^{n+1},
\end{eqnarray*}
or, equivalently,
\begin{eqnarray}\label{eq:fully2}
\mathcal{M} \bigg(\dfrac{\mathbf{u}^{n+1}-\mathbf{u}^{n}}{\Delta t}\bigg) +  A\mathbf{u}^{n+1}= F^{n+1}, %\big(\overline{F}\big)^{n+1}+(F')^{n},
\end{eqnarray}
where \begin{eqnarray*}
A =  \sum_{k=0}^{N}\mathcal{G}_k\otimes \mathcal{K}_k, \quad \mathcal{M}=\mathcal{G}_0\otimes M, \quad
F^{n+1} =\bigg(g_0 \otimes f_0\bigg)^{n+1}.
\end{eqnarray*}
Rearranging the \eqref{eq:fully2}, we obtain the following matrix form of the discrete systems:
\begin{equation}\label{eq:linear}
\mathbf{\mathcal{A}} \mathbf{u}^{n+1}=\mathcal{F}^{n+1},
\end{equation}
where for  $k=1,\ldots,N$
\begin{eqnarray*}
	\mathbf{\mathcal{A}} &=& \mathcal{G}_0\otimes \underbrace{(M +\Delta t\mathcal{K}_0)}_{\widehat{\mathcal{K}}_0} + \bigg(\sum_{k=1}^{N}\mathcal{G}_k \otimes \underbrace{(\Delta t \mathcal{K}_k)}_{\widehat{\mathcal{K}}_k} \bigg),\\
	\mathcal{F}^{n+1} &=&  \mathcal{M} \mathbf{u}^{n} + \Delta t F^{n+1}.
\end{eqnarray*}

Next, we state  the stability analysis of the proposed method on the energy norm defined in \eqref{energynorm}.

\begin{theorem}\label{thm:stability}
	There exists a constant C independent of $h$ and $\Delta t$ such that for all $m>0$
	\begin{eqnarray*}
		\|u^{m}\|^2_{L^2(L^2(\mathcal{D});\Gamma)}  + \Delta t \sum_{n=1}^{m}\|u^{n}\|^2_{\xi}	\leq  C \bigg( \|u^{0}\|^2_{L^2(L^2(\mathcal{D});\Gamma)} + \Delta t\sum_{n=1}^{m} \|f^{n}\|^2_{L^2(L^2(\mathcal{D});\Gamma)}\bigg).
	\end{eqnarray*}
\end{theorem}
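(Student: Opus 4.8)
The plan is a discrete energy (telescoping) argument in the norm \eqref{energynorm}. First note that the fully discrete system \eqref{eq:linear}, equivalently \eqref{eq:fully2}, is the algebraic form of the variational problem: find $u^{n+1} \in V_h \otimes \mathcal{Y}_n$ such that
\[
\Big( \tfrac{u^{n+1}-u^{n}}{\Delta t},\, v \Big) + a_{\xi}(u^{n+1},v) = (f^{n+1},v) \qquad \forall\, v \in V_h \otimes \mathcal{Y}_n ,
\]
where $(\cdot,\cdot)$ denotes the $L^2(L^2(\mathcal{D});\Gamma)$ inner product realised by $\mathcal{M}=\mathcal{G}_0 \otimes M$ (recall $\mathcal{G}_0=I$). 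I would test with $v=u^{n+1}$, multiply by $\Delta t$, apply the identity $(a-b,a)=\tfrac12\lVert a\rVert^2-\tfrac12\lVert b\rVert^2+\tfrac12\lVert a-b\rVert^2$ to the first term (discarding the last, nonnegative, contribution), and the coercivity bound \eqref{coer} to the second, which gives
\[
\tfrac12\lVert u^{n+1}\rVert^2_{L^2(L^2(\mathcal{D});\Gamma)} - \tfrac12\lVert u^{n}\rVert^2_{L^2(L^2(\mathcal{D});\Gamma)} + \Delta t\, c_{cv}\lVert u^{n+1}\rVert_{\xi}^2 \le \Delta t\,(f^{n+1},u^{n+1}).
\]

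For the right-hand side I would combine Cauchy--Schwarz with a discrete Poincar\'e--Friedrichs inequality $\lVert v\rVert_{L^2(L^2(\mathcal{D});\Gamma)} \le C_{P}\lVert v\rVert_{\xi}$, valid on $V_h \otimes \mathcal{Y}_n$ with a constant independent of $h$ and of $\xi$ (it follows from $a_{\min}>0$, the homogeneous Dirichlet data, the broken Poincar\'e inequality for DG spaces, and the definition \eqref{energynorm}), together with Young's inequality, to obtain
\[
\Delta t\,(f^{n+1},u^{n+1}) \le \Delta t\, C_{P}\,\lVert f^{n+1}\rVert_{L^2(L^2(\mathcal{D});\Gamma)}\lVert u^{n+1}\rVert_{\xi} \le \frac{\Delta t\, C_{P}^2}{2 c_{cv}}\lVert f^{n+1}\rVert^2_{L^2(L^2(\mathcal{D});\Gamma)} + \frac{\Delta t\, c_{cv}}{2}\lVert u^{n+1}\rVert_{\xi}^2 .
\]
Absorbing the last term on the left and multiplying by two leaves $\lVert u^{n+1}\rVert^2 - \lVert u^{n}\rVert^2 + \Delta t\, c_{cv}\lVert u^{n+1}\rVert_{\xi}^2 \le (C_P^2/c_{cv})\,\Delta t\,\lVert f^{n+1}\rVert^2$. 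Summing over $n=0,\dots,m-1$ telescopes the $L^2$ terms, and, using $\lVert u^0_h\rVert \le \lVert u^0\rVert$ for the projected initial datum, one arrives at the asserted bound with a constant $C$ depending only on $c_{cv}$ and $C_P$, hence independent of $h$ and $\Delta t$.

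The main obstacle is the uniform discrete Poincar\'e--Friedrichs inequality: one must verify that the constant relating the broken $L^2$-norm to the energy norm \eqref{energynorm} neither degenerates as $h\to 0$ nor depends on the stochastic variable $\xi$. This is precisely where $a(\cdot,\xi)\ge a_{\min}$ and the interior-penalty jump terms in \eqref{energynorm} are used; the convective boundary and interface contributions in $\lVert\cdot\rVert_{e}$ are harmless because, under $\nabla\cdot\mathbf{b}=0$, they are nonnegative and are already built into the coercivity constant $c_{cv}$ of \eqref{coer_cont}. As an alternative that sidesteps the Poincar\'e inequality, one may instead bound $\Delta t\,(f^{n+1},u^{n+1}) \le \tfrac{\Delta t}{2}\lVert f^{n+1}\rVert^2 + \tfrac{\Delta t}{2}\lVert u^{n+1}\rVert^2$, keep $\lVert u^{n+1}\rVert^2$ on the left, and close the recursion with the discrete Gr\"onwall lemma for $\Delta t$ sufficiently small, at the price of a constant $C$ that grows with the final time $T$.
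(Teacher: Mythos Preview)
Your argument is correct, but it differs from the paper's in how the source term is handled. The paper also tests with $v=u^{n+1}$, uses the same polarization identity and the coercivity \eqref{coer}, but then bounds the right-hand side directly by
\[
(f^{n+1},u^{n+1}) \le \tfrac12\lVert f^{n+1}\rVert_{L^2(L^2(\mathcal{D});\Gamma)}^2 + \tfrac12\lVert u^{n+1}\rVert_{L^2(L^2(\mathcal{D});\Gamma)}^2,
\]
sums, and closes with the discrete Gr\"onwall inequality---precisely the ``alternative'' you mention at the end. Your primary route instead invokes a discrete Poincar\'e--Friedrichs inequality $\lVert v\rVert_{L^2(L^2(\mathcal{D});\Gamma)}\le C_P\lVert v\rVert_\xi$ to absorb the source into the energy-norm term, after which the sum telescopes without Gr\"onwall. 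That variant is legitimate here (homogeneous Dirichlet data, $a\ge a_{\min}$, and the broken Poincar\'e inequality for SIPG spaces make $C_P$ independent of $h$; integrating in $\xi$ is harmless), and it buys you a constant that does not grow with the final time $T$ and no implicit smallness restriction on $\Delta t$. The paper's Gr\"onwall approach avoids having to verify the uniform Poincar\'e estimate for the specific norm \eqref{energynorm}, at the cost of a $T$-dependent constant.
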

\begin{proof}
Taking $v=u^{n+1}$ in the following fully discrete system
\begin{eqnarray}\label{eqn:fully}
    \frac{1}{\Delta t}   \int \limits_{\Gamma} \int \limits_{\mathcal{D}}  ( u^{n+1}-u^n) \, v \; dx \; \rho(\xi) \, d\xi   + a_{\xi}(u^{n+1},v)  = l_{\xi}(t_{n+1},v)
\end{eqnarray}
we obtain
\begin{eqnarray*}
	\frac{1}{\Delta t}   \int \limits_{\Gamma} \int \limits_{\mathcal{D}}  ( u^{n+1}-u^n) \, u^{n+1} \; dx \; \rho(\xi) \, d\xi  	 + a_{\xi}(u^{n+1},u^{n+1})= l_{\xi}(t_{n+1},u^{n+1}).
\end{eqnarray*}
An application of the polarization identity
\begin{eqnarray*}\label{ineq:polar}
	\forall x,y\in \mathbb{R}, \quad \frac{1}{2}(x^2-y^2) \leq \frac{1}{2}(x^2-y^2+(x-y)^2) = (x-y)x,
\end{eqnarray*}
yields
\begin{eqnarray}\label{eq:111}
	\frac{1}{2\Delta t} \bigg( \|u^{n+1}\|^2_{L^2(L^2(\mathcal{D});\Gamma)} - \|u^{n}\|^2_{L^2(L^2(\mathcal{D});\Gamma)} \bigg) + a_{\xi}(u^{n+1},u^{n+1})  = l_{\xi}(t_{n+1},u^{n+1}).
\end{eqnarray}
From the coercivity of $a_{\xi}$ \eqref{coer}, Cauchy-Schwarz's, and Young's inequalities,  the expression \eqref{eq:111} reduces to
\begin{align*}
& \frac{1}{2\Delta t} \bigg( \|u^{n+1}\|^2_{L^2(L^2(\mathcal{D});\Gamma)} - \|u^{n}\|^2_{L^2(L^2(\mathcal{D});\Gamma)}\bigg) + \frac{c_{cv}}{2}\|u^{n+1}\|^2_{\xi}   \leq |l_{\xi}(t_{n+1},u^{n+1})| \\
& \quad  \leq \|f^{n+1}\|_{L^2(L^2(\mathcal{D});\Gamma)} \|u^{n+1}\|_{L^2(L^2(\mathcal{D});\Gamma)}\nonumber \\
& \quad  \leq \frac{1}{2}\|f^{n+1}\|^2_{L^2(L^2(\mathcal{D});\Gamma)} + \frac{1}{2}\|u^{n+1}\|^2_{L^2(L^2(\mathcal{D});\Gamma)}.
\end{align*}
Multiplying by $2\Delta t$ and summing from $n=0$ to $n=m-1$, we obtain
\begin{eqnarray*}
		\|u^{m}\|^2_{L^2(L^2(\mathcal{D});\Gamma)} &-& \|u^{0}\|^2_{L^2(L^2(\mathcal{D});\Gamma)} + \Delta t c_{cv}\sum_{n=1}^{m}\|u^{n}\|^2_{\xi} \\
		&\leq&  \Delta t\sum_{n=1}^{m} \|f^{n}\|^2_{L^2(L^2(\mathcal{D});\Gamma)} + \Delta t \sum_{n=1}^{m}\|u^{n}\|^2_{L^2(L^2(\mathcal{D});\Gamma)}.
\end{eqnarray*}
After applying discrete Gronwall inequality \cite{BRiviere_2008a}, the desired result is obtained
\begin{eqnarray*}
		\|u^{m}\|^2_{L^2(L^2(\mathcal{D});\Gamma)}  + \Delta t \sum_{n=1}^{m}\|u^{n}\|^2_{\xi}	\leq  C \bigg( \|u^{0}\|^2_{L^2(L^2(\mathcal{D});\Gamma)} + \Delta t\sum_{n=1}^{m} \|f^{n}\|^2_{L^2(L^2(\mathcal{D});\Gamma)}\bigg),
\end{eqnarray*}
where the constant $C$ is independent of $h$ and $\Delta t$.
\end{proof}

Ones can easily derive a priori error estimates for unsteady stochastic problem \eqref{eqn:unsteady} by the following procedure as done for the stationary problem in Section~\ref{sec:error}. We also note that time dependence of the problem introduces additional complexity of solving a large linear system for each time step. Therefore, we apply the low--rank approximation technique introduced in Section~\ref{sec:lowrank} for each fixed time step.

%%%%%%%%%%%%%%%%%%%%%%%%%%%%%%%%%%%%%%%%%%%%%%%%%%%%%%%%%%%%%%%%%%%%%%%%%%%%%%%%%%%%%%%%%%%%%%%%%%%%%%%%%
%%%%%%%%%%%%%%%%%%%%%%%%%%%%%%%%%%%%%%%%%%%%%%%%%%%%%%%%%%%%%%%%%%%%%%%%%%%%%%%%%%%%%%%%%%%%%%%%%%%%%%%%%

\section{Numerical Results}\label{sec:num}

In this section, we present several numerical results to examine the quality of the proposed numerical approaches.  As mentioned before, we are here  interested in the quality of interest moments of the solution $u(x,\omega)$ in \eqref{eqn:m1} rather than the solution  $u(x,\omega)$. The numerical experiments are performed on an Ubuntu Linux machine with 32 GB RAM using MATLAB R2020a. To compare the performance of the solution methods, we report  the rank of the computed solution, the number of performed iterations, the computational time, the relative residual, that is,  $\|\mathcal{A} \mathbf{u} -  \mathcal{F}\| / \|\mathcal{F}\|$, and the memory demand of the solution.  Unless otherwise stated, in all simulations, iterative methods are terminated when the residual, measured in the Frobenius norm, is reduced to $\epsilon_{tol} = 10^{-4}$ or  the maximum iteration number ($\#iter_{max} =100$)  is reached. We note that the tolerance $\epsilon_{tol}$ should be chosen, such that  $\epsilon_{trunc} \leq  \epsilon_{tol}$;  otherwise, one would be essentially iterating on the noise from the low--rank truncations.

In the numerical experiments, the random input $\eta$ is characterized by the covariance function
\begin{align}\label{Cov:Gauss}
 C_{\eta} (\mathbf{x},\mathbf{y}) = \kappa^2 \prod_{n=1}^{2}  e^{-\left| x_n -y_n \right|/\ell_n } \quad \forall (\mathbf{x},\mathbf{y}) \in \mathcal{D}
\end{align}
with the correlation length $\ell_n$.  We use linear elements to generate discontinuous Galerkin basis and Legendre polynomials as stochastic basis functions since the underlying random variables have uniform distribution  over $[-\sqrt{3},\sqrt{3}]$. The eigenpair $(\lambda_j, \phi_j)$ corresponding to covariance function \eqref{Cov:Gauss} are given explicitly in \cite{GJLord_CEPowell_TShardlow_2014}.

%%%%%%%%%%%%%%%%%%%%%%%%%%%%%%%%%%%%%%%%%%%%%%%%%%%%%%%%%%%%%%%%%%%%%%%%%%%%%%%%%%%%%%%%%%%%%%%%%%%%%%%%%

\subsection{Stationary problem with random diffusion parameter}\label{ex:stationary_diff}
As a first benchmark problem, we consider a two-dimensional stationary convection diffusion equation with random diffusion parameter \cite{KLee_HCElman_2017} defined on $\mathcal{D} = [-1,1]^2$ with the deterministic source function $f(x)=0$, the constant convection parameter $ \mathbf{b}(x)=(0,1)^T$, and the Dirichlet boundary condition
\[
u_d(x)=\begin{cases}
u_d(x_1,-1)=x_1, & u_d(x_1,1)=0,\\
u_d(-1,x_2)=-1, & u_d(1,x_2)=1.
\end{cases}
\]
The random diffusion parameter is defined by $a(x,\omega)= \nu \,\eta(x,\omega)$, where the random field $\eta(x,\omega)$ can be chosen as a uniform random field having unity mean with the corresponding covariance function \eqref{Cov:Gauss} and $\nu$ is the viscosity parameter.  The solution exhibits exponential boundary layer near $x_2 =1$, where the value of the solution changes dramatically. Therefore, discontinuous Galerkin discretization in the spatial domain can be a better alternative compared to standard finite element methods; see Figure~\ref{fig:SDmeanVariance} for the mean and variance of solutions  for various values of viscosity parameter $\nu$. As $\nu$ decreases, the boundary layer becomes more visible.

\indent Table~\ref{tab::RD_SDG_N}, ~\ref{tab::RD_SDG_Vis}, and ~\ref{tab::RD_SDG_KappaPrecond} report the results of the simulations  by considering  various data sets. We show results for varying truncation number in KL expansion $N$, while keeping other parameters constant in Table~\ref{tab::RD_SDG_N}.
\begin{figure}[t]
	\centering
	\includegraphics[width=1\textwidth]{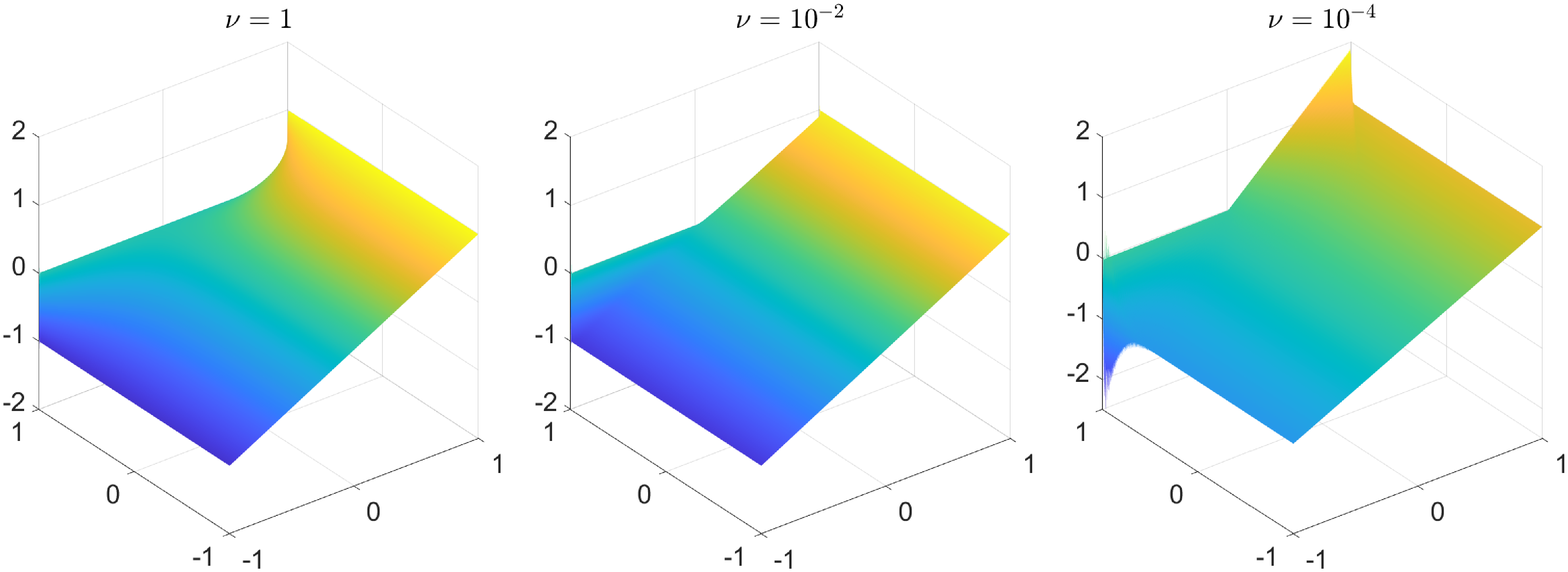}
	\includegraphics[width=1\textwidth]{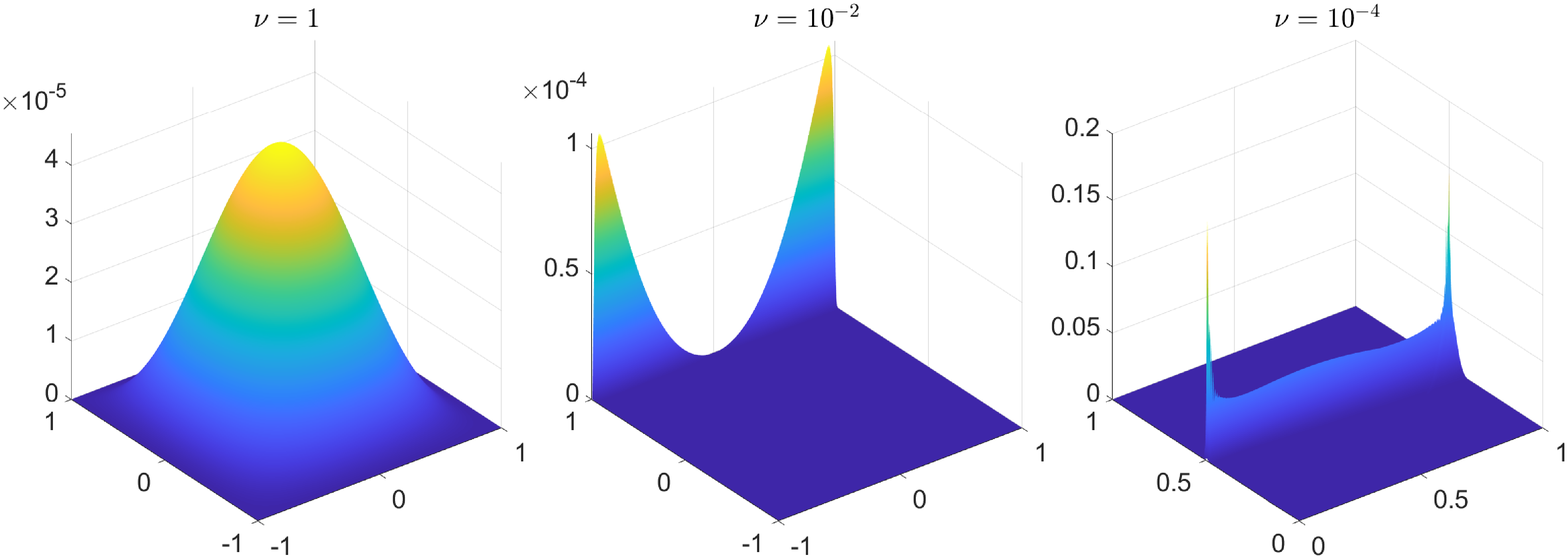}	
	\caption{Example~\ref{ex:stationary_diff}: Mean (top) and variance (bottom) of SG solutions obtained solving by $\mathcal{A}\backslash \mathcal{F}$ with $\ell=1$, $\kappa=0.05$, $N_d=393216$, $N=3$, and $Q=2$ for various values of viscosity parameter $\nu$.}
	\label{fig:SDmeanVariance}		
\end{figure}

\begin{table}[H]
	\scriptsize{
	\caption{Example~\ref{ex:stationary_diff}: Simulation results showing ranks of truncated solutions, total number of iterations, total CPU times (in seconds),  relative residual, and memory demand of the solution (in KB) with $N_d=6144 $, $Q=3$, $\ell=1$, $\kappa=0.05$,  $\nu=10^{-4}$, and  the mean-based preconditioner $\mathcal{P}_0$ for varying values of $N$.}
	\label{tab::RD_SDG_N}
	\hspace{-15mm}
	\begin{tabular}{ccccc}
	\begin{tabular}[c]{@{}c@{}}Method\\ $\epsilon_{trunc}$\end{tabular}	& \begin{tabular}[c]{@{}c@{}}LRPCG\\ 1e-06 (1e-08)\end{tabular} & \begin{tabular}[c]{@{}c@{}}LRPBiCGstab\\ 1e-06 (1e-08)\end{tabular} &  \begin{tabular}[c]{@{}c@{}}LRPQMRCGstab\\ 1e-06 (1e-08)\end{tabular} & \begin{tabular}[c]{@{}c@{}}LRPGMRES\\ 1e-06 (1e-08)\end{tabular} \\ \hline
		\hline
		N=3    &                         &                         &                         &    \\
		Ranks  & 10 (10)                 & 10 (10)                 & 9 (10)                  & 10 (10) \\
		\#iter & 5 (5)                   & 3 (3)                   & 3 (3)                   & 4 (4)   \\
		CPU    & 7.0 (7.7)               & 8.8 (8.9)               & 7.8 (10.0)              & 5.4 (5.2)  \\
		Resi.  & 1.8160e-07 (3.2499e-07) & 5.5982e-06 (5.5413e-06) & 2.9222e-05 (3.1021e-05) & 6.3509e-07 (6.3509e-07) \\
		Memory & 481.6 (481.6)           & 481.6 (481.6)           & 433.4 (481.6)           & 481.6 (481.6) \\
		\hline
		\hline
		N=4    &                         &                         &                         &    \\
		Ranks  & 12 (18)                 & 17 (18)                 & 17 (17)                 & 17 (17) \\
		\#iter & 4 (5)                   & 3 (3)                   & 3 (3)                   & 5 (5)   \\
		CPU    & 9.2 (13.3)              & 15.3 (15.3)             & 14.2 (14.4)             & 12.1 (11.7)  \\
		Resi.  & 1.2367e-06 (1.4311e-07) & 7.7090e-06 (7.7030e-06) & 1.0819e-05 (4.3167e-06) & 8.1316e-08 (8.1316e-08) \\
		Memory & 579.3 (868.9)           & 820.7 (868.9)           & 820.7 (820.7)           & 820.7 (820.7) \\
		\hline
		\hline
		N=5    &                         &                         &                         &    \\
		Ranks  & 18 (28)                 & 21 (28)                 & 22 (28)                 & 19 (28) \\
		\#iter & 4 (5)                   & 3 (3)                   & 3 (3)                   & 5 (5)  \\
		CPU    & 15.2 (20.8)             & 24.9 (25.1)             & 25.4 (26.0)             & 20.3 (20.6) \\
		Resi.  & 1.1705e-06 (8.2045e-08) & 8.5525e-06 (8.5527e-06) & 1.6985e-06 (8.6182e-07) & 8.4680e-08 (8.4680e-08) \\
		Memory & 871.9 (1356.3)          & 1017.2 (1356.3)         & 1065.6 (1356.3)         & 920.3 (1356.3) \\
		\hline
		\hline
		N=6    &                         &                         &                         &    \\
		Ranks  & 26 (42)                 & 26 (42)                 & 25 (42)                 & 25 (42) \\
		\#iter & 4 (4)                   & 3 (3)                   & 3 (3)                   & 4 (4)   \\
		CPU    & 25.6 (32.0)             & 42.4 (43.7)             & 49.4 (51.2)             & 29.7 (31.5)  \\
		Resi.  & 9.2495e-07 (1.0605e-06) & 9.6694e-06 (9.6649e-06) & 7.7812e-07 (4.1770e-07) & 1.0476e-06 (1.0476e-06) \\
		Memory & 1265.1 (2043.6)         & 1265.1 (2043.6)         & 1216.4 (2043.6)         & 1216.4 (2043.6) \\
		\hline
		\hline
		N=7    &                         &                         &                         &    \\
		Ranks  & 30 (60)                 & 32 (60)                 & 32 (60)                 & 28 (47)  \\
		\#iter & 4 (4)                   & 3 (3)                   & 3 (3)                   & 4 (4)    \\
		CPU    & 52.5 (58.8)             & 69.3 (73.8)             & 86.1 (87.9)             & 57.9 (57.7)  \\
		Resi.  & 1.0719e-06 (1.1205e-06) & 9.9865e-06 (9.9880e-06) & 6.5595e-07 (2.0226e-07) & 1.1075e-06 (1.1075e-06)  \\
		Memory & 1468.1 (2936.3)         & 1566 (2936.3)           & 1566 (2936.3)           & 1370.3 (2300.1)  \\
		\hline
		\hline	
	\end{tabular}
}
\end{table}

When $N$ increases, the complexity of the problem  increases. As expected,  decreasing the truncation tolerance $\epsilon_{trunc}$  increases the cost of computational time and memory requirement, especially for large $N$.  Another key observation from the Table~\ref{tab::RD_SDG_N} is that LRPGMRES exhibits better performance compared to other iterative solvers in terms of CPU time and memory requirement. Table~~\ref{tab::RD_SDG_Vis} displays the performance of low--rank of  Krylov subspace methods with the mean--based  preconditioner $\mathcal{P}_0$ for varying viscosity parameter $\nu$. Decreasing the values of $\nu$ makes the problem more convection dominated. Thus, the rank of the low--rank solution and memory requirements  increase for all iterative solvers.
\begin{table}[t]
	\scriptsize{
	\caption{Example~\ref{ex:stationary_diff}: Simulation results showing ranks of truncated solutions, total number of iterations, total CPU times (in seconds),  relative residual, and memory demand of the solution (in KB) with $N_d=6144 $, $Q=3$, $\ell=1$, $\kappa=0.05$,  $N=7$, and the mean-based preconditioner $\mathcal{P}_0$ for various values of viscosity parameter $\nu$.}
	\label{tab::RD_SDG_Vis}
	\hspace{-15mm}
	\begin{tabular}{ccccc}
	\begin{tabular}[c]{@{}c@{}}Method\\ $\epsilon_{trunc}$\end{tabular}	& \begin{tabular}[c]{@{}c@{}}LRPCG\\ 1e-06 (1e-08)\end{tabular} & \begin{tabular}[c]{@{}c@{}}LRPBiCGstab\\ 1e-06 (1e-08)\end{tabular} &  \begin{tabular}[c]{@{}c@{}}LRPQMRCGstab\\ 1e-06 (1e-08)\end{tabular} & \begin{tabular}[c]{@{}c@{}}LRPGMRES\\ 1e-06 (1e-08)\end{tabular} \\ \hline
	    \hline
		$\nu=1$       &                         &                         &                         &    \\
		Ranks         & 17 (44)                 & 20 (51)                 & 20 (42)                 & 22 (39)  \\
		\#iter        & 4 (4)                   & 3 (3)                   & 3 (3)                   & 4 (4)    \\
		CPU           & 54.3 (62.0)             & 68.7 (75.2)             & 87.9 (91.2)             & 56.4 (56.3) \\
		Resi.         & 8.2189e-07 (1.1215e-06) & 9.9896e-06 (9.9897e-06) & 7.3503e-07 (3.6458e-08) & 1.1062e-06 (1.1062e-06) \\
		Memory        & 831.9 (2300.1)          & 978.8 (2495.8)          & 978.8 (2055.4)          & 1076.6 (1908.6)  \\
		\hline
		\hline
		$\nu=10^{-2}$ &                         &                         &                         &    \\
		Ranks         & 21 (60)                 & 26 (60)                 & 25 (59)                 & 23 (39)  \\
		\#iter        & 4 (4)                   & 3 (3)                   & 3 (3)                   & 4 (4)    \\
		CPU           & 52.4 (64.5)             & 65.9 (72.3)             & 89.5 (94.3)             & 52.3 (52.6) \\
		Resi.         & 7.7284e-07 (1.1225e-06) & 9.9906e-06 (9.9918e-06) & 1.6268e-06 (8.4171e-08) & 1.1074e-06 (1.1074e-06) \\
		Memory        & 1027.7 (2936.3)         & 1272.4 (2936.3)         & 1223.4 (2887.3)         & 1125.6 (1908.6)  \\
		\hline
		\hline
		$\nu=10^{-4}$ &                         &                         &                         &    \\
		Ranks         & 30 (60)                 & 32 (60)                 & 32 (60)                 & 28 (47)  \\
		\#iter        & 4 (4)                   & 3 (3)                   & 3 (3)                   & 4 (4)    \\
		CPU           & 52.5 (58.8)             & 69.3 (73.8)             & 86.1 (87.9)             & 57.9 (57.7)  \\
		Resi.         & 1.0719e-06 (1.1205e-06) & 9.9865e-06 (9.9880e-06) & 6.5595e-07 (2.0226e-07) & 1.1075e-06 (1.1075e-06)  \\
		Memory        & 1468.1 (2936.3)         & 1566 (2936.3)           & 1566 (2936.3)           & 1370.3 (2300.1)  \\
		\hline
		\hline	
	\end{tabular}
}
\end{table}

Next, we investigate the convergence behavior of the low--rank variants of  iterative solvers  with  different values of standard deviation  $\kappa$ for varying values of $\nu$ in Figure~\ref{fig:RD_SDG_Convergence}. For relatively large $\kappa$, we observe that LRPBiCGstab  and LRPGMRES yield better convergence behaviour, whereas the LRPCG method does not converge since the dominance of  nonsymmetrical  increases.

\begin{figure}[t]
	\centering	
\includegraphics[width=1\textwidth]{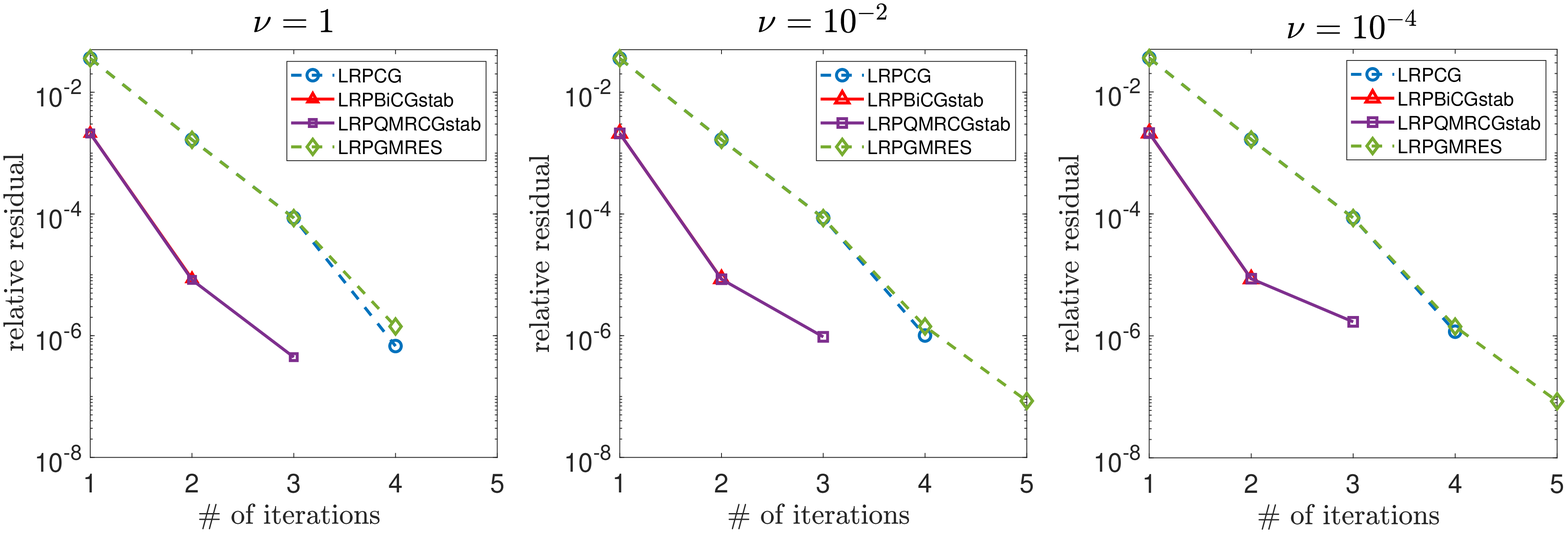}	
\includegraphics[width=1\textwidth]{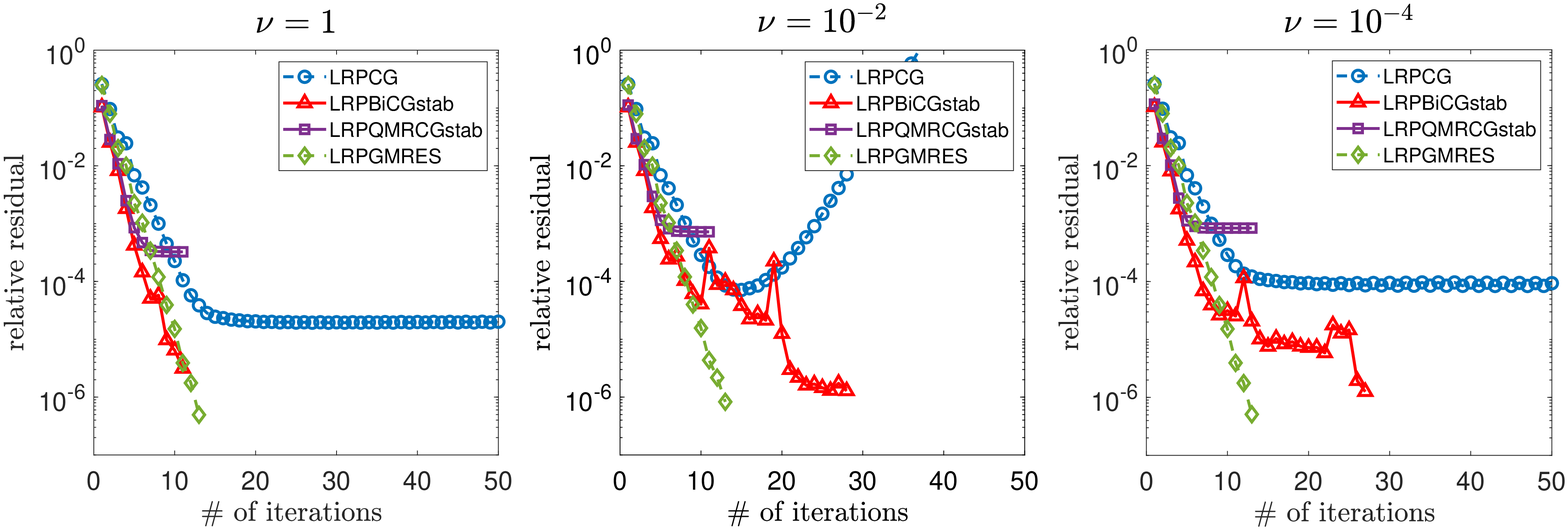}
	\caption{Example~\ref{ex:stationary_diff}: Convergence of low--rank variants of iterative solvers with $\kappa=0.05$ (top) and  $\kappa=0.5$ (bottom) for varying values of viscosity  $\nu$. The mean-based preconditioner $\mathcal{P}_0$ is used with the parameters $N=5$, $Q=3$, $\ell=1$,  $N_d=6144$, and $\epsilon_{trunc}= 10^{-6}$.}
	\label{fig:RD_SDG_Convergence}		
\end{figure}

\begin{table}[H]
	\scriptsize
	\caption{Example~\ref{ex:stationary_diff}: Simulation results showing ranks of truncated solutions, total number of iterations, total CPU times (in seconds),  relative residual, and memory demand of the solution (in KB) with   $N_d=6144 $, $N=7$, $Q=3$, $\ell=1$, $\epsilon_{trunc}= 10^{-6}$, and $\nu=10^{-4}$ for different choices of preconditioners.}
	\label{tab::RD_SDG_KappaPrecond}
%	\hspace{-20mm}
    \centering{
	\begin{tabular}{ccccc}
\begin{tabular}[c]{@{}c@{}}Method\\ Preconditioner \end{tabular}			& \begin{tabular}[c]{@{}c@{}}LRPBiCGstab\\ $\mathcal{P}_0$\end{tabular} & \begin{tabular}[c]{@{}c@{}}LRPGMRES\\ $\mathcal{P}_0$\end{tabular} & \begin{tabular}[c]{@{}c@{}}LRPBiCGstab\\ $\mathcal{P}_1$\end{tabular} & \begin{tabular}[c]{@{}c@{}}LRPGMRES\\ $\mathcal{P}_1$\end{tabular} \\ \hline
	    \hline
		$\kappa=0.05$&         &             &               &                               \\
		Ranks  &  32          &  28          & 31            &  27                 \\
		\#iter &  3           &  4           & 3             &  5                 \\
		CPU    &  69.3        &  57.9        & 69.0          &  72.7              \\
		Resi.  &  9.9865e-06  &  1.1075e-06  & 6.0448e-06    &  8.7712e-08        \\
		Memory &  1566        &  1370.3      & 1517.1        &  1321.3            \\
		\hline
		\hline	
		$\kappa=0.5$&           &               &              &                               \\
		Ranks  &  60            &  60           & 60           &  60                  \\
		\#iter &  13            &  13           & 15           &  13                        \\
		CPU    &  781.7         &  248.5        & 913.6        &  245.8                   \\
		Resi.  &  1.2629e-06    &  4.9417e-07   & 1.8697e-06   &  6.9625e-07        \\
		Memory &  2936.3        &  2936.3       & 2936.3       &  2936.3            \\
		\hline
		\hline	
	\end{tabular}}
\end{table}

\begin{figure}[H]
	\centering \includegraphics[width=1\textwidth]{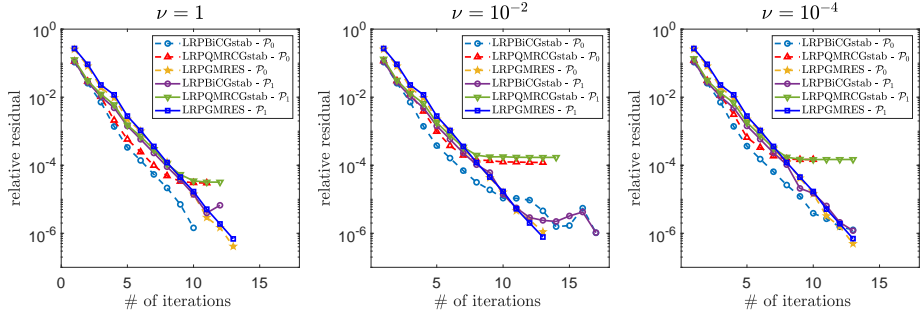}		
	\caption{Example~\ref{ex:stationary_diff}: Convergence of low--rank variants of LRPBiCGstab, LRPQMRCGstab, and LRPGMRES with  $N=7$, $Q=3$, $\ell=1$,  $N_d=6144$, $\epsilon_{trunc}= 10^{-8}$, and  $\kappa=0.5$ for  the mean-based preconditioner $\mathcal{P}_0$ and the Ullmann preconditioner $\mathcal{P}_1$.}
	\label{fig:RD_SDG_Precond}		
\end{figure}

\begin{figure}[htp!]
	\centering
	\includegraphics[width=1\textwidth]{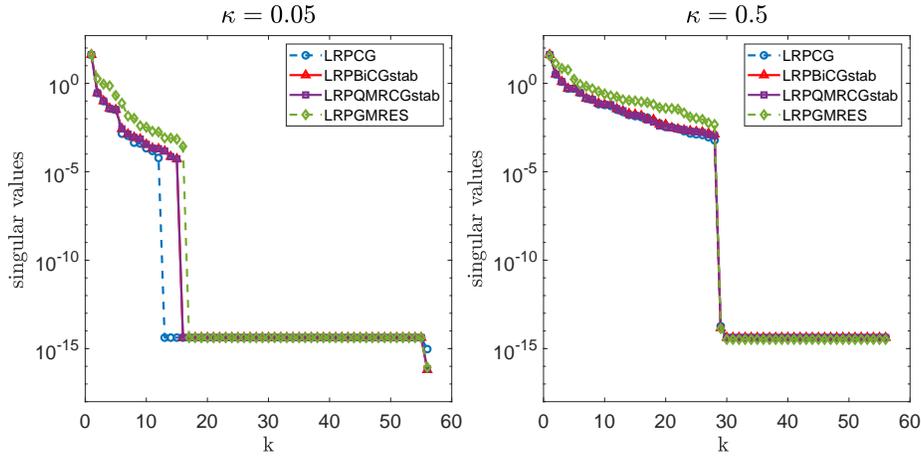}	
	\caption{Example~\ref{ex:stationary_diff}: Decay of singular values of low--rank solution matrix $\mathbf{U}$ obtained by using the mean-based preconditioner $\mathcal{P}_0$ with $N=5$, $Q=3$, $\ell=1$, $N_d=6144$, $\nu=1$, and $\epsilon_{trunc}=10^{-6}$ for $\kappa=0.05$ (left) and $\kappa=0.5$ (right).}
	\label{fig:RD_SDG_singularSoln}		
\end{figure}

In Table~\ref{tab::RD_SDG_KappaPrecond}, we examine the effect of the standard deviation parameter $\kappa$ with $\mathcal{P}_0$  and $\mathcal{P}_1$  preconditioners for only  LRPBiCGstab and LRPGMRES since they exhibit better convergence behaviour; see Figure~\ref{fig:RD_SDG_Convergence}.  As $\kappa$ increases, the low-rank solutions indicate deteriorating performance, regardless of which  the preconditioner or iterative solver are used. %Figure~\ref{fig:RD_SDG_Convergence} and Table~\ref{tab::RD_SDG_KappaPrecond} show that the performance of numerical %approaches is adversely  affected by increase in the standard deviation $\kappa$ of the random input data.
We also examine the effect of preconditioners on the iterative solvers in Figure~\ref{fig:RD_SDG_Precond} in terms of convergence of iterative solvers. Since LRPCG does not converge for large values of $\kappa$, they are not included. The results show that the mean--based preconditioner $\mathcal{P}_0$ exhibits better convergence behaviour compared to the Ullmann preconditioner $\mathcal{P}_1$ for LRPBiCGstab and  LRPQMRCGstab, whereas they are almost the same for LRPGMRES.

Figure~\ref{fig:RD_SDG_singularSoln}  shows the decay of singular values of low--rank solution matrix $\mathbf{U}$ obtained by using the mean-based preconditioner $\mathcal{P}_0$.  Keeping other parameters fixed, increasing the value of $\kappa$ slows down the decay of the singular values of the obtained solutions. Thus, the total time for solving the system and the time spent on truncation will also increase; see Table~\ref{tab::RD_SDG_KappaPrecond}.

\begin{table}[t]
	\caption{Example~\ref{ex:stationary_diff}: Total CPU times (in seconds) and memory (in KB) for $N_d=6144 $, $Q=3$, $\ell=1$, and $\kappa=0.05$.}
	\label{tab::RD_SDFull}
	\centering
	\begin{tabular}{cccc}
		$\mathcal{A}\backslash \mathcal{F}$ & $\nu=10^{0}$  & $\nu=10^{-2}$ & $\nu=10^{-4}$\\ \hline
		N   & CPU (Memory)             & CPU (Memory)       & CPU (Memory)    \\ \hline
		2   & 10.8 (960)                 & 10.7 (960) 		  & 10.8 (960)      \\
		3   & 1463.7 (1920)           & 1464.2 (1920)      & 1463.7 (1920)   \\
		4   & OoM                             & OoM                & OoM
	\end{tabular}
\end{table}

Last, we display the performance of $\mathcal{A}\backslash \mathcal{F}$ in terms of total CPU times (in seconds) and memory requirements (in KB) in Table~\ref{tab::RD_SDFull}. Some numerical results are not reported since the solution from terminates with "out of memory", which we have denoted as "OoM". A major observation from numerical simulations, low--rank variant of Krylov subspace methods achieve greater computational savings especially in terms of memory.

%%%%%%%%%%%%%%%%%%%%%%%%%%%%%%%%%%%%%%%%%%%%%%%%%%%%%%%%%%%%%%%%%%%%%%%%%%%%%%%%%%%%%%%%%%%%%%%%%%%%%%%%%%%%%%%%
%%%%%%%%%%%%%%%%%%%%%%%%%%%%%%%%%%%%%%%%%%%%%%%%%%%%%%%%%%%%%%%%%%%%%%%%%%%%%%%%%%%%%%%%%%%%%%%%%%%%%%%%%%%%%%%%%%

\subsection{Stationary problem with random convection parameter}\label{ex:stationary_conv}
Our second example is a two-dimensional stationary convection diffusion equation with random velocity. To be precise, we choose  the deterministic diffusion parameter $a(x,\omega)=\nu > 0$, the deterministic source function $f(x)=0$, and the spatial domain $\mathcal{D} = [0,1]^2$. The random velocity field $\mathbf{b}(x,\omega)$ is
\begin{eqnarray}\label{eq:conv}
\mathbf{b}(x,\omega):= \left(  \cos\Big(\frac{1}{5}\eta(x,\omega)\Big), \sin\Big(\frac{1}{5}\eta(x,\omega)\Big) \right)^T,
\end{eqnarray}
\begin{figure}[t]
	\centering
	\includegraphics[width=1\textwidth]{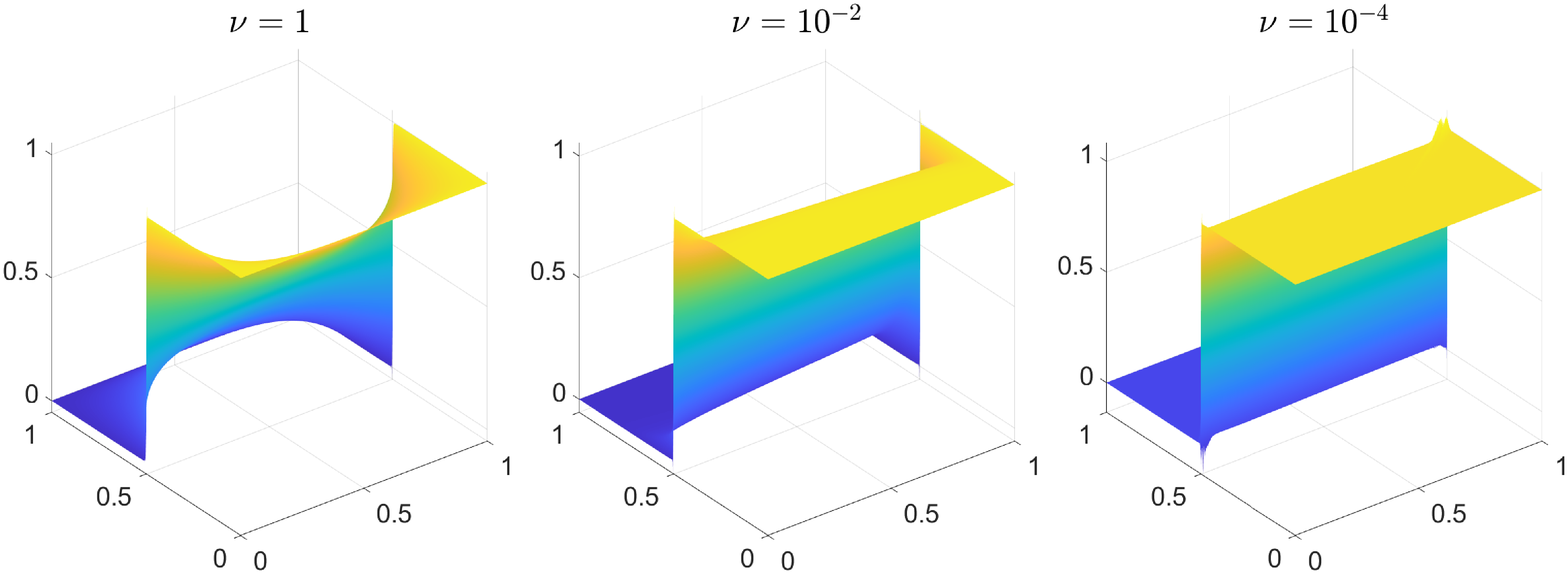}
	\includegraphics[width=1\textwidth]{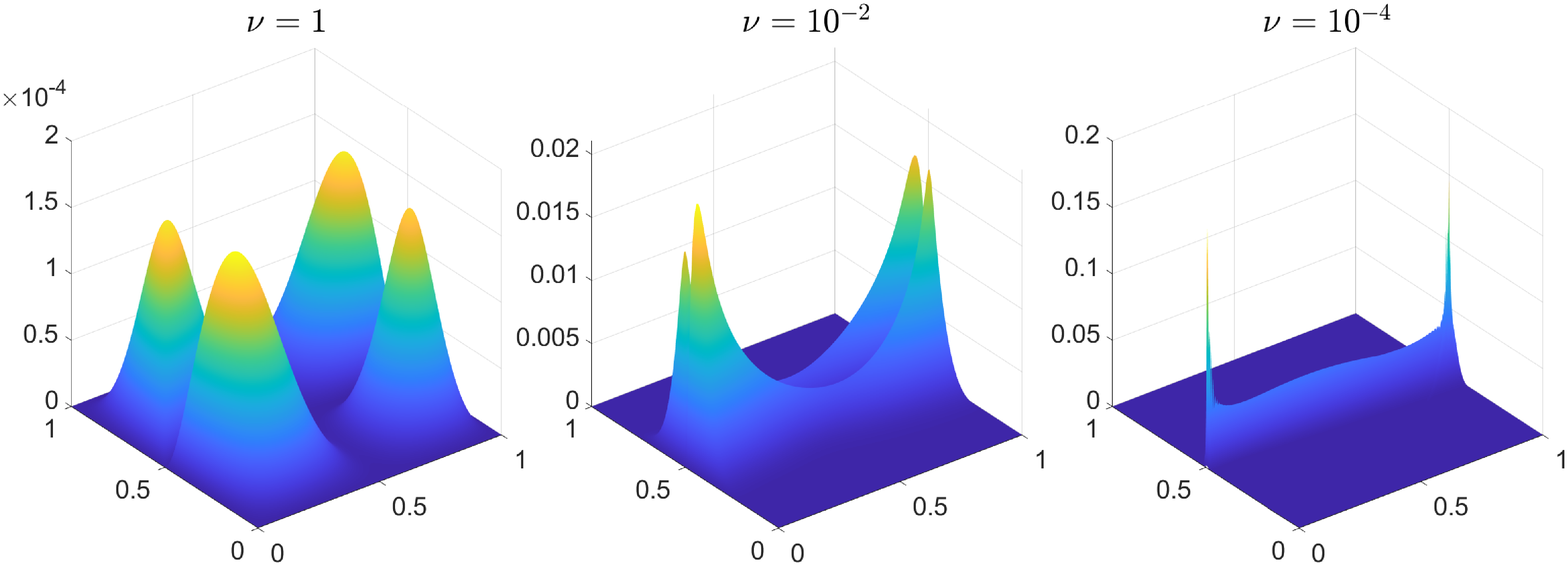}
	\caption{Example~\ref{ex:stationary_conv}: Mean (top) and variance (bottom) of SG solutions obtained by solving $ \mathcal{A}\backslash \mathcal{F} $ with $ N=2 $, $ Q=2 $, $\ell=1$, $N_d=393216$,  and $\kappa=0.05$,  for various values of $\nu$.}
	\label{fig:RC_SCmeanVariance_Full}		
\end{figure}
\noindent where the random field $\eta(x,\omega)$ is chosen as a uniform random field having zero mean with the covariance function defined in \eqref{Cov:Gauss}. The Dirichlet boundary condition $u_d(x)$ is given by
\[
u_d(x)=
        \begin{cases}
        1, & x\in S,\\
        0, & x \in \partial \mathcal{D} \backslash S,
        \end{cases}
\]
where the set $S$ is the subset of $\partial \mathcal{D}$ defined by
\[
\{x_1=0, x_2\in[0,0.5]\}\cup \{x_1\in[0,1], x_2=0\}\cup \{x_1=1, x_2\in[0,0.5]\}.
\]
Due to the random velocity, $\mathbf{b}(x,\omega)$, the solution has  sharp transitions in the domain $\mathcal{D}$ and then spurious oscillations will propagate into the stochastic domain $\Omega$. As $\nu$ decreases, the interior  layer becomes more visible; see Figure~\ref{fig:RC_SCmeanVariance_Full}  for the mean and variance of solution for various values of $\nu$.

\begin{table}[t]
	\scriptsize{
	\caption{Example~\ref{ex:stationary_conv}: Simulation results showing ranks of truncated solutions, total number of iterations, total CPU times (in seconds),  relative residual, and memory demand of the solution (in KB) with $N_d=6144 $, $Q=3$, $\ell=1$, $\kappa=0.05$,  $N=7$, and the mean-based preconditioner $\mathcal{P}_0$ for various values of viscosity parameter $\nu$.}
	\label{tab::RC_SDG_Vis}
	\hspace{-15mm}
	\begin{tabular}{ccccc}
	\begin{tabular}[c]{@{}c@{}}Method\\ $\epsilon_{trunc}$\end{tabular}	& \begin{tabular}[c]{@{}c@{}}LRPCG\\ 1e-06 (1e-08)\end{tabular} & \begin{tabular}[c]{@{}c@{}}LRPBiCGstab\\ 1e-06 (1e-08)\end{tabular}  & \begin{tabular}[c]{@{}c@{}}LRPQMRCGstab\\ 1e-06 (1e-08)\end{tabular} & \begin{tabular}[c]{@{}c@{}}LRPGMRES\\ 1e-06 (1e-08)\end{tabular} \\ \hline
	    \hline
		$\nu=1$    &                         &                         &                         &    \\
		Ranks      & 8 (19)                  & 10 (23)                 & 9 (22)                  & 6 (8)      \\
		\#iter     & 10 (10)                 & 3 (3)                   & 4 (4)                   & 5 (5)      \\
		CPU        & 108.6 (120.4)           & 49.7 (56.0)             & 76.1 (82.4)             & 60.7 (59.7)  \\
		Resi.      & 1.3666e-06 (1.4307e-06) & 5.7868e-07 (5.7870e-07) & 6.8606e-06 (6.8424e-06) & 1.2811e-06 (1.2811e-06)  \\
		Memory     & 391.5 (929.8)           & 489.4 (1125.6)          & 440.4 (1076.6)          & 293.6 (391.5)  \\
		\hline
		\hline
		$\nu=10^{-2}$  &                         &                         &                         &    \\
		Ranks          & 15 (34)                 & 45 (60)                 & 21 (60)                 & 6 (14)      \\
		\#iter         & 100 (100)               & 100 (100)               & 100 (100)               & 100 (100)      \\
		CPU            & 992.2 (1202.8)          & 1782.7 (2133.9)         & 2602.3 (2815.0)         & 8798.7 (8864.7)  \\
		Resi.          & 3.0059e+26 (3.0060e+26) & 1.4807e-01 (2.6669e-02) & 9.0173e-03 (9.3659e-03) & 1.0002e-03 (1.0002e-03)  \\
		Memory         & 734.1 (1663.9)          & 2202.2 (2936.3)         & 1027.7 (2936.3)         & 293.6 (685.1)  \\
		\hline
		\hline
		$\nu=10^{-4}$  &                         &                         &                         &    \\
		Ranks          & 29 (60)                 & 60 (60)                 & 35 (60)                 & 12 (27)      \\
		\#iter         & 100 (100)               & 100 (100)               & 100 (100)               & 100 (100)      \\
		CPU            & 1102.3 (1382.6)         & 2124.4 (2135.5)         & 2802.1 (2869.9)         & 8401.0 (8394.8)  \\
		Resi.          & 8.7243e+26 (8.7242e+26) & 2.2085e-02 (3.2792e-02) & 3.6713e-03 (3.3074e-03) & 1.2075e-03 (1.2075e-03)  \\
		Memory         & 1419.2 (2936.3)         & 2936.3 (2936.3)         & 1712.8 (2936.3)         & 587.3 (1321.3)  \\
		\hline
		\hline	
	\end{tabular}
}
\end{table}

In Table~\ref{tab::RC_SDG_Vis} and ~\ref{tab::RC_SCP0Eps04_Nd}, we display the performance of low--rank of Krylov subspace methods with the mean--based precondition $\mathcal{P}_0$ by considering  various data sets. When $\nu$ decreases, the complexity of the problem  increases in terms of the rank of the truncated solutions, total CPU times (in seconds), and memory demand of the solution (in KB); see Table~\ref{tab::RC_SDG_Vis}. As the previous example,  LRPCG method does not work well for smaller values of $\nu$, whereas LRPGMRES exhibits better  performance. Next, we investigate the convergence behavior of the low--rank variants of iterative solvers with different values of $\nu$ in Figure~\ref{fig:RC_SCG_Convergence}.  While LRPBiCGstab method exhibits oscillatory behaviour, the relative residuals obtained by LRPQMRCGstab and LRPGMRES  decrease monotonically.

\begin{table}[H]
	\scriptsize
	\caption{Example~\ref{ex:stationary_conv}: Simulation results showing ranks of truncated solutions, total number of iterations, total CPU times (in seconds),  relative residual, and memory demand of the solution (in KB) with $N =7$, $Q=3$, $\ell=1$, $\kappa=0.05$,  $\nu=10^{-4}$ and  the mean-based preconditioner $\mathcal{P}_0$ for various values of $N_d$.}
	\label{tab::RC_SCP0Eps04_Nd}
	\hspace{-15mm}
	\begin{tabular}{ccccc}
		& \begin{tabular}[c]{@{}c@{}}LRPCG\\ 1e-06 (1e-08)\end{tabular} & \begin{tabular}[c]{@{}c@{}}LRPBiCGstab\\ 1e-06 (1e-08)\end{tabular} & \begin{tabular}[c]{@{}c@{}}LRPQMRCGstab\\ 1e-06 (1e-08)\end{tabular} & \begin{tabular}[c]{@{}c@{}}LRPGMRES\\ 1e-06 (1e-08)\end{tabular} \\ \hline
		\hline
		$ N_d=384 $ &                         &                         &                         &    \\
		Ranks       & 17 (37)                 & 58 (60)                 & 21 (60)                 & 26 (42)      \\
		\#iter      & 100 (100)               & 100 (100)               & 100 (100)               & 100 (100)    \\
		CPU         & 213.3 (205.7)           & 329.8 (329.4)           & 487.6 (481.7)           & 2119.7 (2135.3) \\
		Resi.       & 1.1637e+28 (1.1637e+28) & 1.5964e-02 (3.3908e-01) & 1.0163e-02 (1.0857e-02) & 7.4718e-06 (7.2284e-06)  \\
		Memory      & 66.9 (145.7)            & 228.4 (236.3)           & 82.7 (236.3)            &  102.4 (165.4)  \\
		\hline
		\hline
		$ N_d=1536$ &                         &                         &                         &    \\
		Ranks       & 25 (56)                 & 60 (60)                 & 21 (55)                 & 30 (49)         \\
		\#iter      & 100 (100)               & 100 (100)               & 100 (100)               & 65 (65)   \\
		CPU         & 305.4 (338.8)           & 497.8 (503.0)           & 699.4 (709.0)           & 1278.3 (1286.1) \\
		Resi.       & 2.9340e+27 (2.9339e+27) & 5.3887e-02 (1.9615e-02) & 6.8316e-03 (7.0652e-03) & 1.8606e-06 (1.8606e-06)  \\
		Memory      & 323.4 (724.5)           & 776.3 (776.3)           & 271.7 (711.6)           & 388.1 (633.9)  \\
		\hline
		\hline
		$ N_d=6144$ &                         &                         &                         &    \\
		Ranks       & 29 (60)                 & 60 (60)                 & 35 (60)                 & 12 (27)      \\
		\#iter      & 100 (100)               & 100 (100)               & 100 (100)               & 100 (100)      \\
		CPU         & 1102.3 (1382.6)         & 2124.4 (2135.5)         & 2802.1 (2869.9)         & 8401.0 (8394.8)  \\
		Resi.       & 8.7243e+26 (8.7242e+26) & 2.2085e-02 (3.2792e-02) & 3.6713e-03 (3.3074e-03) & 1.2075e-03 (1.2075e-03)  \\
		Memory      & 1419.2 (2936.3)         & 2936.3 (2936.3)         & 1712.8 (2936.3)         & 587.3 (1321.3)  \\
		\hline
		\hline
		$ N_d=24576$  &                         &                         &                         &    \\
		Ranks         & 25 (60)                 & 60 (60)                 & 23 (60)                 & 7 (19)      \\
		\#iter        & 100 (100)               & 100 (100)               & 100 (100)               & 100 (100)      \\
		CPU           & 7276.2 (10498.6)        & 16726.6 (16936.7)       & 19960.3 (20646.5)       & 41929.8 (41778.9) \\
		Resi.         & 3.5040e+26 (3.5100e+26) & 5.8952e-03 (1.7120e-02) & 1.7710e-03 (1.6015e-03) & 7.3550e-04 (7.3550e-04)   \\
		Memory        & 4823.4 (11576.3)        & 11576.3 (11576.3)       & 4437.6 (11576.2)        & 1350.6 (3665.8)  \\
		\hline
		\hline
	\end{tabular}
\end{table}

Figure~\ref{fig:RC_SC_Sing1} shows the decay of singular values of low--rank solution matrix $\mathbf{U}$ obtained by using  $\mathcal{P}_0$ and  $\mathcal{P}_1$ preconditioners.  Keeping other parameters fixed, decreasing  the value of $\nu$ slows down the decay of the singular values of the obtained solutions. Thus, the total time for solving the system and the time spent on truncation will  increase; see Table~\ref{tab::RC_SDG_Vis}. In practical applications, one is usually more interested in large--scale simulations in which the degree of freedom (Dof) is quite large. In Table~\ref{tab::RC_GMRES}, we look for memory demand of the solution (in KB) obtained full--rank and low--rank variants of GMRES solver. As expected, low--rank approximation significantly reduces computer memory required to solve the large system.

\begin{figure}[t]
	\centering
	\includegraphics[width=1\textwidth]{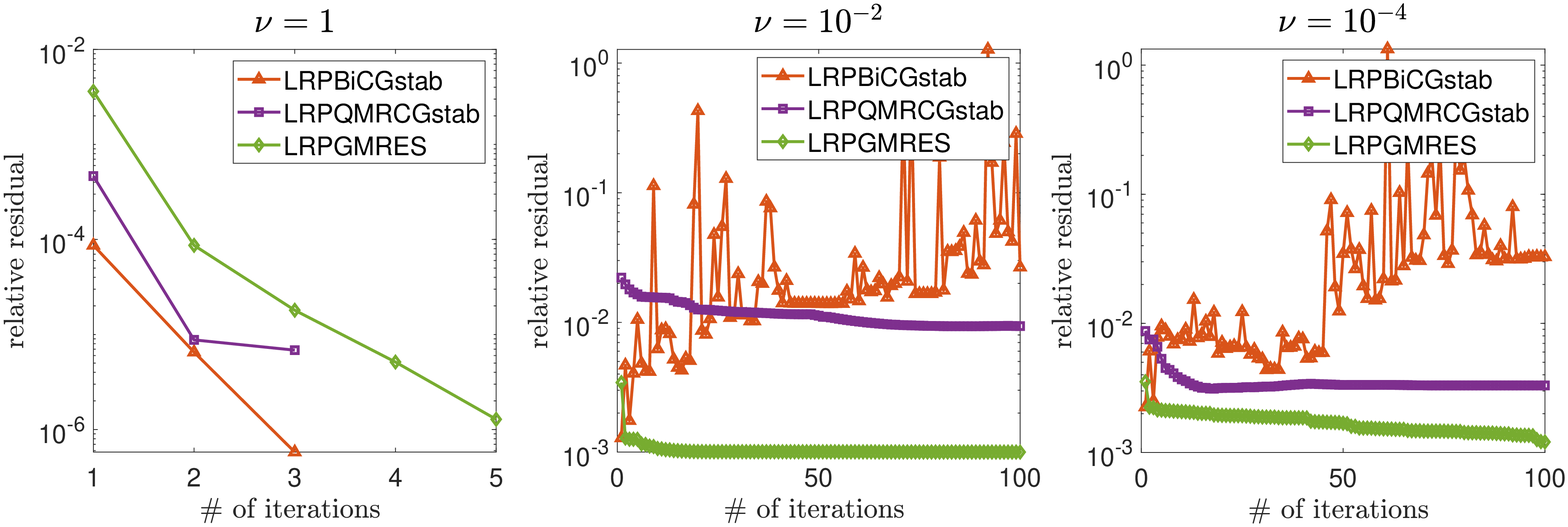}
	\caption{Example~\ref{ex:stationary_conv}: Convergence of low--rank variants of iterative solvers for varying values of viscosity  $\nu$. The mean-based preconditioner $\mathcal{P}_0$ is used with the parameters $N=7$, $Q=3$, $\ell=1$,  $\kappa=0.05$,  $N_d=6144$, and $\epsilon_{trunc}= 10^{-8}$.}
	\label{fig:RC_SCG_Convergence}		
\end{figure}

\begin{figure}[H]
	\centering
	\includegraphics[width=0.95\textwidth]{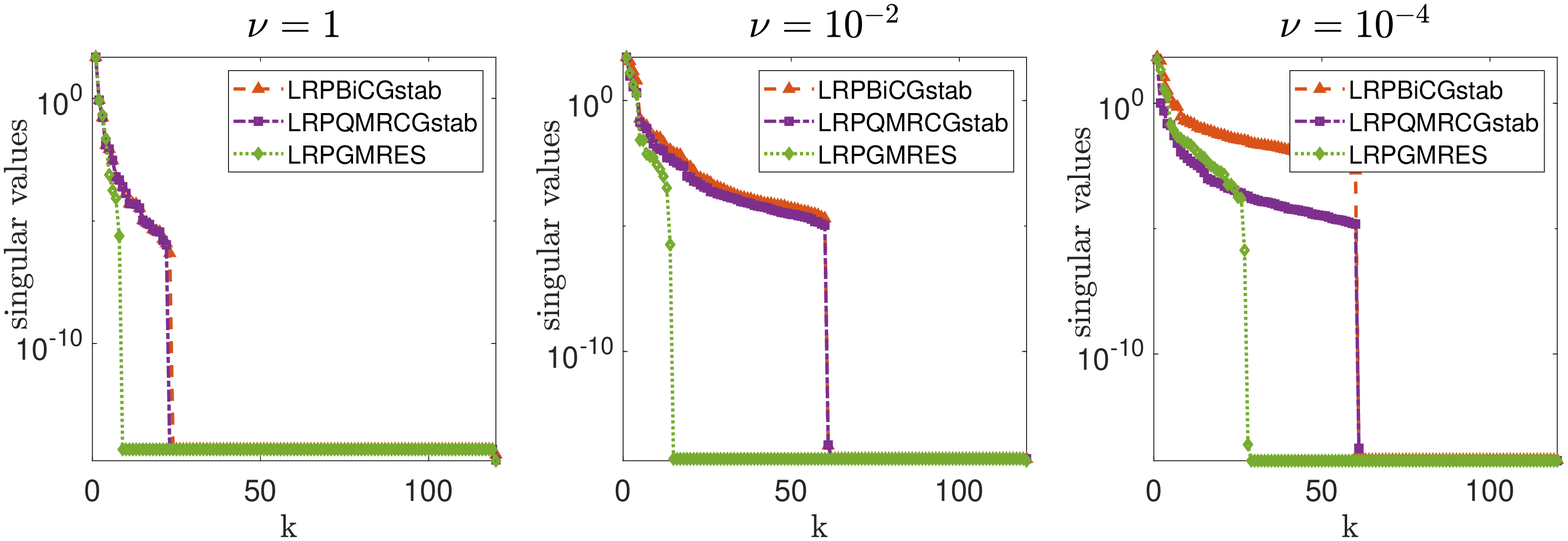}
	\includegraphics[width=0.95\textwidth]{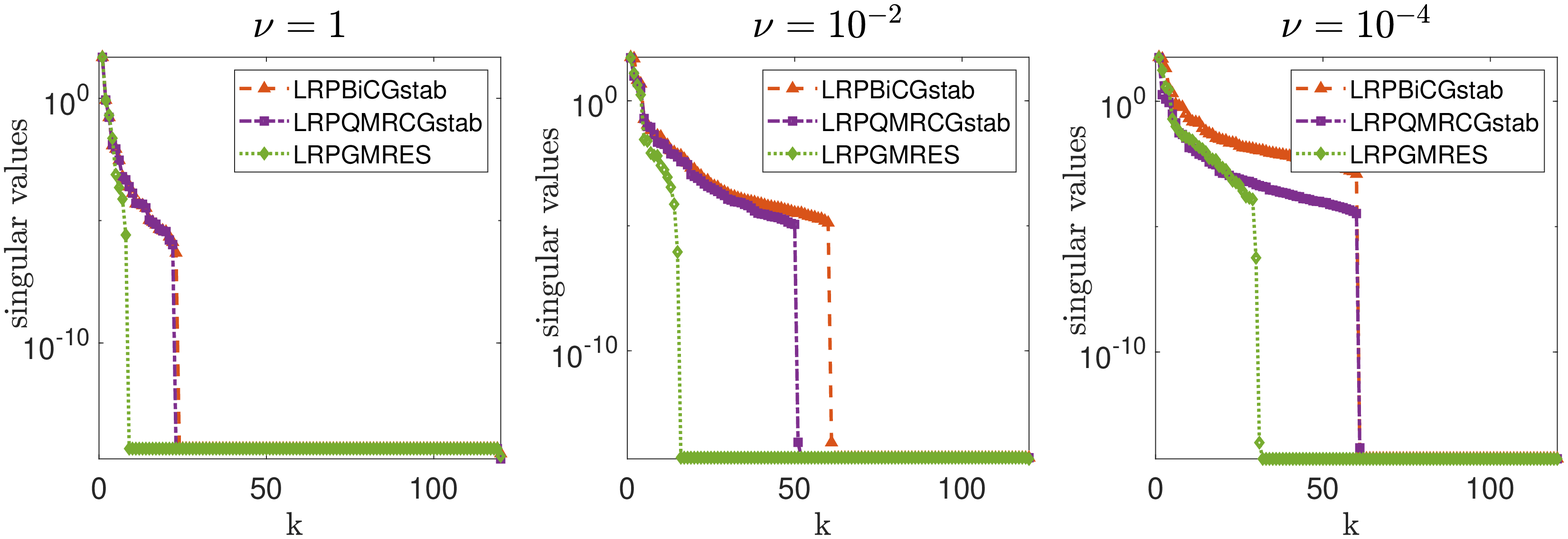}
	\caption{Example~\ref{ex:stationary_conv}: Decay of singular values of solution matrix $ \mathbf{U}  $ with $ N=7 $, $ Q=3 $, $\ell=1$, $N_d=6144$, $\kappa=0.05$, and $\epsilon_{trunc}=10^{-8}$ with the mean-based preconditioner (top) and the Ullmann preconditioner (bottom) for various values of $\nu$.}
	\label{fig:RC_SC_Sing1}		
\end{figure}

\begin{table}[H]
	 \centering
	\scriptsize{
	\caption{Example~\ref{ex:stationary_conv}: Memory demand of the solution (in KB) obtained full--rank and low--rank variants of GMRES solver with $N=7$, $Q=3$, $\ell=1$, $\kappa=0.05$, $\epsilon_{trunc}=10^{-6}$ ($\epsilon_{trunc}=10^{-8}$), and the mean-based preconditioner $\mathcal{P}_0$ for various values of degree of freedom (DoF).}
	\label{tab::RC_GMRES}
	\begin{tabular}{ccccc}
	 DoF  	& 46080 & 184320 & 737280 & 2949120   \\ \hline
	    \hline
		Low--Rank      & 94.5 (157.5) & 323.4 (556.3) & 587.3 (1468.1) &  1543.5 (3665.8)       \\
		Full--Rank     & 360          & 1440          & 5760           &  23040            \\
		\hline
		\hline
	\end{tabular}
}
\end{table}

%%%%%%%%%%%%%%%%%%%%%%%%%%%%%%%%%%%%%%%%%%%%%%%%%%%%%%%%%%%%%%%%%%%%%%%%%%%%%%%%%%%%%%%%%%%%%%%%%%%%%%%%%%%%%%%%
%%%%%%%%%%%%%%%%%%%%%%%%%%%%%%%%%%%%%%%%%%%%%%%%%%%%%%%%%%%%%%%%%%%%%%%%%%%%%%%%%%%%%%%%%%%%%%%%%%%%%%%%%%%%%%%%%%

\subsection{Unsteady problem with random diffusion parameter}\label{ex:unsteady_diff}
Last, we consider an unsteady convection diffusion equation with random diffusion parameter defined on $\mathcal{D}=[0,1]^2$. The rest of data is as follows
\[
T=0.5, \quad \mathbf{b}(x)=(1,1)^T, \quad f(x,t) =0, \quad u^0(x) =0
\]
with the Dirichlet boundary condition
\[
u_d(x)=\begin{cases}
u_d(0,x_2)=x_2(1-x_2), & u_d(1,x_2)=0,\\
u_d(x_1,0)=0, & u_d(x_1,1)=0.
\end{cases}
\]

\begin{figure}[H]
	\centering
	\includegraphics[width=1\textwidth]{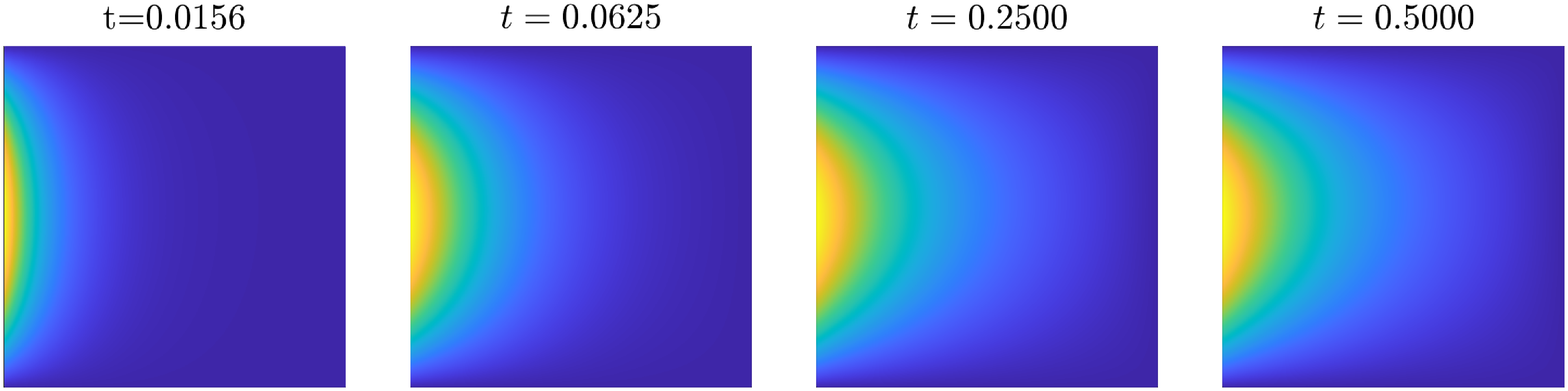}
	\includegraphics[width=1\textwidth]{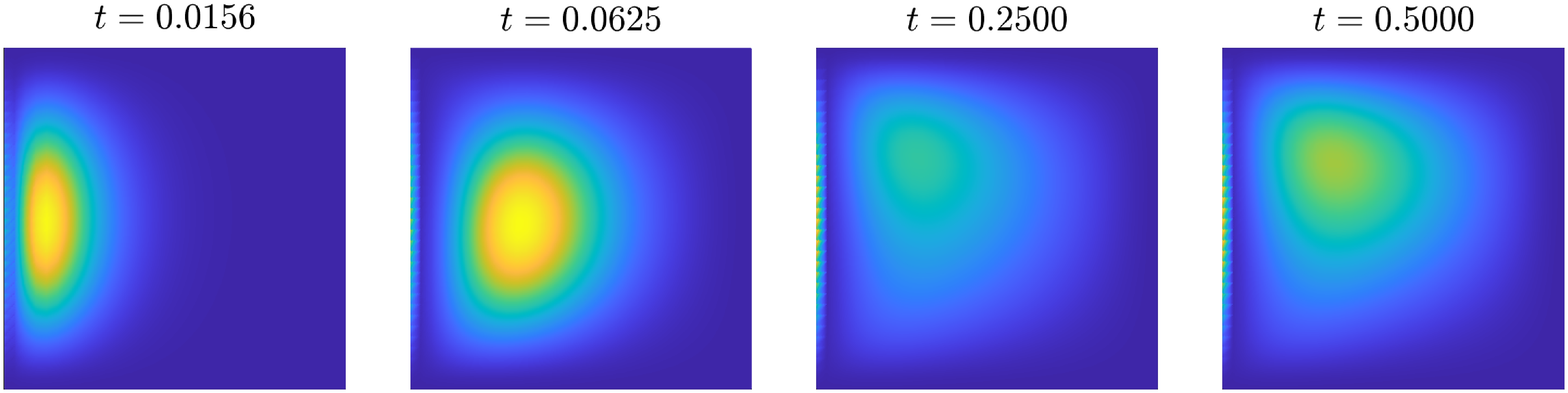}
	
	\caption{Example~\ref{ex:unsteady_diff}: Mean and variance of computed solution at various time steps obtained by  LRPBiCGstab with $N=17$, $Q=3$, $\ell=1.5$, $\sigma=0.15$, $\epsilon_{trunc}=10^{-6}$, and $\mathcal{P}_0$.}
	\label{fig:UDmean_cor1.5}		
\end{figure}

\begin{table}[H]
	\scriptsize{
		\caption{Example~\ref{ex:unsteady_diff}: Simulation results showing ranks of truncated solutions, total number of iterations, total CPU times (in seconds),  relative residual, and memory demand of the solution (in KB) with $N_d=6144 $, $Q=3$, $\kappa=0.15$, and the mean-based preconditioner $\mathcal{P}_0$ for various values of correlation length $\ell$ at final time $T=0.5$.}
		\label{tab::UDG0_corr}
		\hspace{-15mm}
	\begin{tabular}{ccccc}
	\begin{tabular}[c]{@{}c@{}}Method\\ $\epsilon_{trunc}$\end{tabular}	& \begin{tabular}[c]{@{}c@{}}LRPCG\\ 1e-06 (1e-08)\end{tabular} & \begin{tabular}[c]{@{}c@{}}LRPBiCGstab\\ 1e-06 (1e-08)\end{tabular} &  \begin{tabular}[c]{@{}c@{}}LRPQMRCGstab\\ 1e-06 (1e-08)\end{tabular} & \begin{tabular}[c]{@{}c@{}}LRPGMRES\\ 1e-06 (1e-08)\end{tabular} \\ \hline
	\hline
	$\ell=3$, $N=9$    &                         &                         &                         &    \\
	Ranks              & 25 (56)                 & 25 (55)                 & 22 (52)                 & 25 (38)  \\
	\#iter             & 4 (4)                   & 3 (3)                   & 4 (4)                   & 4 (4)    \\
	CPU                & 5348.9 (6543.5)         & 5072.2 (6869.4)         & 9459.5 (11060.9)        & 5974.5 (6005.6) \\
	Resi.              & 2.7590e-04 (2.7601e-04) & 1.2268e-03 (1.2268e-03) & 8.5901e-05 (8.5657e-05) & 2.3936e-04 (2.3936e-04) \\
	Memory             & 1243 (2784.3)           & 1243 (2734.5)           & 1093.8 (2585.4)         & 1243 (1889.3)  \\
	\hline
	\hline
	$\ell=2.5$, $N=10$ &                         &                         &                         &    \\
	Ranks              & 27 (61)                 & 27 (60)                 & 25 (55)                 & 27 (43)  \\
	\#iter             & 4 (4)                   & 3 (3)                   & 4 (4)                   & 4 (4)    \\
	CPU                & 7564.4 (9310.7)         & 7030.5 (9558.5)         & 13158.6 (15636.3)       & 9219.1 (9158.8)  \\
	Resi.              & 2.7397e-04 (2.7405e-04) & 1.2665e-03 (1.2665e-03) & 9.5085e-05 (9.4831e-05) & 2.3810e-04 (2.3810e-04) \\
	Memory             & 1356.3 (3064.3)         & 1356.3 (3014.1)         & 1255.7 (2762.9)         & 1356.3 (2160.1)  \\
	\hline
	\hline
	$\ell=2$, $N=13$   &                         &                          &                         &    \\
	Ranks              & 28 (68)                 & 29 (66)                  & 27 (63)                 & 32 (52)  \\
	\#iter             & 4 (4)                   & 3 (3)                    & 4 (4)                   & 4 (4)    \\
	CPU                & 10813.5 (15435.4)       & 10745.0 (15709.9)        & 18748.2 (24049.6)       & 18496.9 (18284.7)  \\
	Resi.              & 2.6686e-04 (2.6703e-04) & 1.2994e-03 (1.2994e-03)  & 1.0372e-04 (1.0345e-04) & 2.4580e-04 (2.4580e-04) \\
	Memory             & 1466.5 (3561.5)         & 1518.9 (3456.8)          & 1414.1 (3299.6)         & 1676 (2723.5)  \\
	\hline
	\hline
	$\ell=1.5$, $N=17$ &                         &                         &                         &    \\
	Ranks              & 32 (78)                 & 33 (77)                 & 31 (73)                 & 38 (62)  \\
	\#iter             & 4 (4)                   & 3 (3)                   & 4 (4)                   & 4 (4)    \\
	CPU                & 20658.3 (36422.4)       & 22889.2 (33851.9)       & 36876.4 (50963.2)       & 58974.4 (57828.0) \\
	Resi.              & 2.3545e-04 (2.3548e-04) & 1.3217e-03 (1.3217e-03) & 1.0444e-04 (1.0425e-04) & 2.9531e-04 (2.9531e-04) \\
	Memory             & 1821 (4438.7)           & 1877.9 (4381.8)         & 1764.1 (4154.2)         & 2162.4 (3528.2)  \\
	\hline
	\hline
\end{tabular}
	}
\end{table}

The random diffusion coefficient $a(x,w)$ is a uniform random field having unity mean with the covariance function (\ref{Cov:Gauss}). In the numerical simulations, the number of time points is chosen as $N_T = 32$. From literature, see, e.g., [33], we know that decreasing the correlation length slows down the decay of the eigenvalues in the KL expansion of the random variable $a(\mathbf{x},\omega)$ and therefore, more random variables are  required to sufficiently  capture the randomness. That is, it results in  an increase in the truncation parameter $N$: The reverse is the case when the correlation length is increased. Therefore, the effect of correction length on the low--rank variants of the iterative solver  is our main focus for this benchmark problem. With the help of the following computation as done in \cite{HCElman_TSu_2018},
\begin{equation*}
  \left( \sum \limits_{i=1}^{N} \lambda_i \right) / \left( \sum \limits_{i=1}^{M_{\ell}} \lambda_i \right) > 0.97,
\end{equation*}
we can compute suitable truncation number $N$ for the given correlation length $\ell$. Here, $M_{\ell}$ is a large number which we set $1000$. Computed mean and variance of the solution are displayed in Figure~\ref{fig:UDmean_cor1.5} for various time steps.

\begin{table}[htp!]
   \centering
	\scriptsize{
		\caption{Example~\ref{ex:unsteady_diff}: Simulation results showing total number of iterations, total CPU times (in seconds), and memory demand of the full--rank solution (in KB) with $N_d=6144 $, $Q=3$, $\kappa=0.15$, and the mean-based preconditioner $\mathcal{P}_0$ for various values of correlation length $\ell$ at final time $T=0.5$.}
		\label{tab::UDG0_corr_full}
	%	\hspace{-30mm}
		\begin{tabular}{cccc}
			\begin{tabular}[c]{@{}c@{}}Method\\ $\epsilon_{tol}$\end{tabular}	& \begin{tabular}[c]{@{}c@{}} PCG\\ 1e-04 \end{tabular} &
			\begin{tabular}[c]{@{}c@{}} PBiCGstab\\ 1e-04 \end{tabular} &
			\begin{tabular}[c]{@{}c@{}} PGMRES\\ 1e-04 \end{tabular} \\ \hline
			\hline
			$\ell=3$, $N=9$    &    &   &        \\
			\#iter & 11  & 5.5   & 10       \\
			CPU    & 7910.7 & 7863.4  & 8727.0   \\
			Memory & 10560 & 10560 & 10560   \\
			\hline
			\hline
		    $\ell=2$, $N=13$    &    &   &        \\
		    \#iter & 11  & 5.5   & 10       \\
		    CPU    & 20252.0 & 20148.5 & 22318.8   \\
		    Memory & 26880 & 26880 & 26880   \\
		    \hline
		    \hline
		    $\ell=1.5$, $N=17$    &    &   &        \\
		    \#iter & 11  & 5.5   & 10       \\
		    CPU    & 41345.0 & 41207.4 & 45499.8   \\
		    Memory & 54720 & 54720 & 54720   \\
		    \hline
		    \hline
		\end{tabular}
	}
\end{table}

\begin{figure}[htp!]
	\centering
	\includegraphics[width=0.5\textwidth]{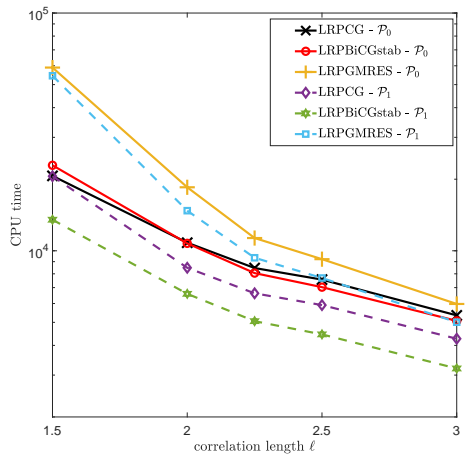}
	\caption{Example~\ref{ex:unsteady_diff}: CPU times of LRPCG, LRPBiCGstab, and LRPGMRES iterative solvers obtained by the preconditioners $\mathcal{P}_0$ and $\mathcal{P}_1$ with $Q=3$, $N_d=6144$, and $\kappa=0.15$ for various values of correlation length $\ell$.}
	\label{fig:cpu_corr}	
\end{figure}

Table~\ref{tab::UDG0_corr} displays the results of numerical simulations for the mean-based preconditioner $\mathcal{P}_0$ for varying values of the correlation length $\ell$. Provided that $97\%$  of the total variance is captured, the small correlation length increases the rank of the computed low--rank solutions and the number of iterations regardless of which the iterative solver is used. Another observation is that decreasing the truncation tolerance $\epsilon_{trunc}$ does not affect the relative residuals but, as expected, at the cost of comparatively more computational time and memory requirements. Next, numerical results obtained by using the standard Krylov subspace iterative solvers  with  the mean-based preconditioner $\mathcal{P}_0$ are displayed in Table~\ref{tab::UDG0_corr_full}. Compared to the full--rank solvers in Table~\ref{tab::UDG0_corr_full}, low--rank Krylov subspace solvers generally  exhibit better performance; see Table~\ref{tab::UDG0_corr}. Regarding of the preconditioners,  Ullmann preconditioner $\mathcal{P}_1$  produce better performance in terms of computational time; see Figure~\ref{fig:cpu_corr}.

%%%%%%%%%%%%%%%%%%%%%%%%%%%%%%%%%%%%%%%%%%%%%%%%%%%%%%%%%%%%%%%%%%%%%%%%%%%%%%%%%%%%%%%%%%%%%%%%%%%%%%%%%%%%%%%%%%%	
%%%%%%%%%%%%%%%%%%%%%%%%%%%%%%%%%%%%%%%%%%%%%%%%%%%%%%%%%%%%%%%%%%%%%%%%%%%%%%%%%%%%%%%%%%%%%%%%%%%%%%%%%%%%%%%%%%%%

\section{Conclusions}	\label{sec:conclusions}
	
In this paper, we have numerically studied  the statistical moments of a convection diffusion equation having random coefficients.  With the help of the stochastic Galerkin approach, we transform the original problem into a system consisting of deterministic convection diffusion equations for each realization of
random coefficients. Then, the symmetric interior penalty Galerkin method is used
to discretize the deterministic problems due to its local mass conservativity. To reduce computational time and memory requirements, we have used low--rank variants of various Krylov subspace methods, such as CG, BiCGstab, QMRCGstab, and  GMRES with suitable preconditioners. It has been  shown in the numerical simulations that LRPGMRES exhibits better performance, especially for convection dominated models.

%%%%%%%%%%%%%%%%%%%%%%%%%%%%%%%%%%%%%%%%%%%%%%%%%%%%%%%%%%%%%%%%%%%%%%%%%%%%%%%%%%%%%%%%%%%%%%%%%%%%%%%%%%%%%
%%%%%%%%%%%%%%%%%%%%%%%%%%%%%%%%%%%%%%%%%%%%%%%%%%%%%%%%%%%%%%%%%%%%%%%%%%%%%%%%%%%%%%%%%%%%%%%%%%%%%%%%%%%%%%%%%%%%

\section*{Acknowledgements}
This work was supported by TUBITAK 1001 Scientific and Technological Research Projects Funding Program with project number 119F022.

%%%%%%%%%%%%%%%%%%%%%%%%%%%%%%%%%%%%%%%%%%%%%%%%%%%%%%%%%%%%%%%%%%%%%%%%%%%%%%%%%%%%%%%%%%%%%%%%%%%%%%%%%%%%%%%

%\bibliography{C:/Users/Hamdi/Dropbox/Reference/references}
%\bibliography{C:/Users/Hamdullah/Dropbox/Reference/references}
%\bibliography{C:/Users/putri/Dropbox/Reference/references}
%\bibliography{C:/Users/pc1/Dropbox/Reference/references}

%%%%%%%%%%%%%%%%%%%%%%%%%%%%%%%%%%%%%%%%%%%%%%%%%%%%%%%%%%%%%%%%%%%%%%%%%%%%%%%%%%%%%%%%%%%%%%%%%%%%%%%%%%%%%%%
%%%%%%%%%%%%%%%%%%%%%%%%%%%%%%%%%%%%%%%%%%%%%%%%%%%%%%%%%%%%%%%%%%%%%%%%%%%%%%%%%%%%%%%%%%%%%%%%%%%%%%%%%%%%%%%

\appendix

\section{Proof of Theorem~\ref{thm:bestapp}}\label{app:A}
\noindent By choosing $\widetilde{v}=\Pi_n(\mathcal{R}_h(v))$, we obtain
\begin{eqnarray*}\label{err1}
\lVert v- \widetilde{v} \rVert_{L^2(H^1(\mathcal{D});\Gamma)}\hspace{-3mm}&\leq& \hspace{-3mm}\lVert v- \mathcal{R}_h(v) \rVert_{L^2(H^1(\mathcal{D});\Gamma)} + \lVert \mathcal{R}_h(v)- \Pi_n(\mathcal{R}_h(v)) \rVert_{L^2(H^1(\mathcal{D});\Gamma)} \nonumber \\
% &=&  \lVert v- \mathcal{R}_h(v) \rVert_{L^2(\Gamma;H_0^1(\mathcal{D}))} + \lVert \mathcal{R}_h(v)- \mathcal{R}_h(\Pi_n(v)) \rVert_{L^2(\Gamma;H_0^1(\mathcal{D}))} \nonumber \\
&=&\hspace{-3mm}\lVert v- \mathcal{R}_h(v) \rVert_{L^2(H^1(\mathcal{D});\Gamma)} + \lVert \mathcal{R}_h(v- \Pi_n(v)) \rVert_{L^2(H^1(\mathcal{D});\Gamma)}
\end{eqnarray*}
for a fixed $v\in L^2(H^2(\mathcal{D});\Gamma)\cap H^{q+1}(H^1(\mathcal{D});\Gamma)$. In view of the estimate in \eqref{approxfm}, we have
\begin{eqnarray}\label{ineq:Rv}
\lVert v- \mathcal{R}_h(v) \rVert_{L^2(H^1(\mathcal{D});\Gamma)} \leq C h^{\min(\ell+1,2)-1} \lVert v \rVert_{L^2(H^2(\mathcal{D});\Gamma)}.
\end{eqnarray}
With the help of the  $H^1$--projection operator in \eqref{eq:Hprojc}, the  Cauchy--Schwarz inequality, the $L^2$--projection operator in \eqref{eq:l2projc}, and the approximation in \eqref{estr},  we obtain
\begin{eqnarray}\label{ineq:Rvt}
\lVert \mathcal{R}_h(v- \Pi_n(v)) \rVert_{L^2(H^1(\mathcal{D});\Gamma)} \hspace{-2mm}&\leq&\hspace{-2mm} C \lVert v- \Pi_n(v) \rVert_{L^2(H^1(\mathcal{D});\Gamma)} \nonumber \\
\hspace{-2mm}&\leq& \hspace{-2mm}C \sum_{n=1}^{N}\bigg(\frac{k_n}{2}\bigg)^{q_n+1}\dfrac{\lVert \partial^{q_n+1}_{\xi_n}v \rVert_{L^2(H^1(\mathcal{D});\Gamma)} }{(q_n+1)!}.
\end{eqnarray}
Combining (\ref{ineq:Rv}) and (\ref{ineq:Rvt}), we get
\begin{eqnarray*}
	\lVert v- \widetilde{v} \rVert_{L^2(H^1(\mathcal{D});\Gamma)}
	&\leq& Ch^{\min(\ell+1,2)-1} \lVert v \rVert_{L^2(H^2(\mathcal{D});\Gamma)} \\
&& + \sum_{n=1}^{N}\bigg(\frac{k_n}{2}\bigg)^{q_n+1}\dfrac{\lVert \partial^{q_n+1}_{\xi_n}v \rVert_{L^2(H^1(\mathcal{D});\Gamma)} }{(q_n+1)!},
\end{eqnarray*}
which implies \eqref{ineq:grad}.
\noindent For the derivation of \eqref{ineq:grad2}, we follow the same strategy:
\begin{align*}
 &	\lVert \nabla^2( v- \widetilde{v}) \rVert_{L^2(L^2(\mathcal{D});\Gamma)} 	\nonumber \\
 & \qquad \leq \lVert \nabla^2 (v- \mathcal{R}_h(v)) \rVert_{L^2(L^2(\mathcal{D});\Gamma)} + \lVert \nabla^2 (\mathcal{R}_h(v)- \Pi_n(\mathcal{R}_h(v))\rVert_{L^2(L^2(\mathcal{D});\Gamma)} \\
 & \qquad = \lVert \nabla^2 ( v- \mathcal{R}_h(v)) \rVert_{L^2(L^2(\mathcal{D});\Gamma)} + \lVert \nabla^2 (\mathcal{R}_h(v- \Pi_n(v)) \rVert_{L^2(L^2(\mathcal{D});\Gamma)}.
\end{align*}
An application of the inverse inequality \eqref{eq:inv} on $\mathcal{R}_h(v)$, the definition of $H^1$--projection operator  \eqref{eq:Hprojc}, and the Cauchy--Schwarz inequality yields
\begin{eqnarray}\label{err2}
\lVert \nabla^2(\mathcal{R}_h(v)) \rVert_{L^2(\mathcal{D})} \leq Ch^{-1}\lVert \nabla(\mathcal{R}_h(v)) \rVert_{L^2(\mathcal{D})} \leq Ch^{-1}\lVert \nabla v \rVert_{L^2(\mathcal{D})}.
\end{eqnarray}
By \eqref{estr}, \eqref{err2}, and \eqref{approxfm},
\begin{eqnarray*}
	\lVert \nabla^2( v- \tilde{v}) \rVert_{L^2(L^2(\mathcal{D});\Gamma)}
	&\leq& \lVert \nabla^2 ( v- \mathcal{R}_h(v)) \rVert_{L^2(L^2(\mathcal{D});\Gamma)} \\
	&& + Ch^{-1}\lVert \nabla (v- \Pi_n(v)) \rVert_{L^2(\mathcal{D};\Gamma)}\\
	&\leq& C h^{\min(\ell+1,2)-2} \lVert v \rVert_{L^2(H^2(\mathcal{D});\Gamma)} \\
    && + Ch^{-1}\sum_{n=1}^{N}\bigg(\frac{k_n}{2}\bigg)^{q_n+1}\dfrac{\lVert \partial^{q_n+1}_{\xi_n}v \rVert_{L^2(H^1(\mathcal{D});\Gamma)} }{(q_n+1)!},
\end{eqnarray*}
which is the desired result.

\section{Proof of Theorem~\ref{thm:main}}\label{app:B}
\noindent Decompose $\lVert u-u_h\rVert_{\xi}$ as
\begin{eqnarray}\label{err3}
	\lVert u-u_h\rVert_{\xi} \leq \lVert u_h-\widetilde{u}\rVert_{\xi} +  \lVert u-\widetilde{u}\rVert_{\xi},
\end{eqnarray}
where $\widetilde{u} \in V_h  \otimes \mathcal{Y}_k^q $ is an approximation of the solution $u$, satisfying the Theorem~\ref{thm:bestapp}.

Now, we first find a bound for the first term of \eqref{err3}. By the coercivity of the bilinear form \eqref{coer}, the Galerkin orthogonality, an integration by parts over convective term in the bilinear form \eqref{eq:bilinear}, and the assumption on the convective term $\nabla \cdot \mathbf{b} =0$, we obtain
\begin{eqnarray}\label{err4}
   c_{cv} \lVert u_h-\widetilde{u}\rVert_{\xi}^2 \hspace{-3mm}&\leq& a_{\xi}(u_h-\widetilde{u},u_h-\tilde{u}) \nonumber \\
 	\hspace{-3mm}&=& a_{\xi}(u-\widetilde{u},\underbrace{u_h-\tilde{u}}_{\chi \in V_h}) +\underbrace{a_{\xi}(u-u_h,u_h-\widetilde{u})}_{=0} \nonumber \\
 	\hspace{-3mm}&=&\hspace{-4mm} \int \limits_{\Gamma}\hspace{-1.5mm}\rho(\xi) \Bigg[  \sum \limits_{K \in \mathcal{T}_h} \int \limits_{K} \hspace{-1mm}a\nabla (u-\widetilde{u}) \cdot  \nabla \chi \; dx
 	- \hspace{-4mm} \sum \limits_{ E \in \mathcal{E}^{0}_h \cup \mathcal{E}_h^{\partial}} \int \limits_E \average{a \nabla (u-\widetilde{u}) }  \jump{\chi} \; ds \nonumber \\
 	&& \hspace{-3.5mm}-\hspace{-4mm} \sum \limits_{ E \in \mathcal{E}^{0}_h \cup \mathcal{E}_h^{\partial}} \int \limits_E \average{a \nabla \chi }  \jump{u-\widetilde{u}} \; ds
 	+\hspace{-3.5mm} \sum \limits_{ E \in \mathcal{E}^{0}_h \cup \mathcal{E}_h^{\partial}} \frac{\sigma }{h_E} \int \limits_E \jump{u-\widetilde{u}} \cdot \jump{\chi} \; ds   \nonumber \\
 	&& \hspace{-3.5mm}- \hspace{-1.5mm}\sum \limits_{K \in \mathcal{T}_h} \int \limits_{K} \mathbf {b} \cdot  (u-\tilde{u}) \nabla \chi \; dx   - \hspace{-1.5mm}\sum \limits_{K \in \mathcal{T}_h}\; \hspace{-0.5mm}\int \limits_{\partial K^{+} \backslash \partial \mathcal{D}} \hspace{-3.5mm}\mathbf {b} \cdot \mathbf{n_E} (u-\tilde{u})(\chi^e-\chi) \; ds \nonumber \\
 	&&\hspace{-3.5mm} + \sum \limits_{K \in \mathcal{T}_h} \; \int \limits_{\partial K^{+} \cap \mathcal{D}^{+}} \mathbf {b}\cdot \mathbf{n_E} (u-\tilde{u}) \chi  \; ds \Bigg]d\xi \nonumber \\
 	&\leq& \lvert T_1 + T_2 + T_3 + T_4 + T_5 + T_6 + T_7\rvert.
\end{eqnarray}
With the help of the bound on $a(x,\omega)$ \eqref{abound}, Cauchy--Schwarz inequality, Young's inequality, and Theorem~\ref{thm:bestapp}, we obtain the following bound for the first term in \eqref{err4}
\begin{eqnarray*}
\lvert T_1\rvert  &\leq& \int \limits_{\Gamma}\rho \sqrt{a_{\max}} \Bigg( \sum \limits_{K \in \mathcal{T}_h}  \lVert\nabla (u-\widetilde{u})\rVert^2_{L^2(K)} \Bigg)^{\frac{1}{2}}
 \Bigg( \sum \limits_{K \in \mathcal{T}_h} \lVert \sqrt{a_{\max}} \nabla \chi\rVert^2_{L^2(K)} \Bigg)^{\frac{1}{2}} d\xi \nonumber \\
 &\leq& \int \limits_{\Gamma}\rho \Bigg( \frac{2}{c_{cv}}a_{\max}\sum \limits_{K \in \mathcal{T}_h} \lVert\nabla (u-\widetilde{u})\rVert^2_{L^2(K)}
 + \frac{c_{cv}}{8} \lVert \chi \rVert_e^2\Bigg) d\xi \nonumber
\end{eqnarray*}
\begin{eqnarray*}
 &\leq& C \sum \limits_{K \in \mathcal{T}_h}  \lVert\nabla (u-\widetilde{u})\rVert^2_{L^2(L^2(K);\Gamma)}
 + \frac{c_{cv}}{8} \lVert \chi \rVert_{\xi}^2 \nonumber \\
 &\leq& C \Bigg(h^{\min(\ell+1,2)-1} \lVert u \rVert_{L^2(H^2(\mathcal{D});\Gamma)} + \sum_{n=1}^{N}\bigg(\frac{k_n}{2}\bigg)^{q_n+1}\dfrac{\lVert \partial^{q_n+1}_{\xi_n} u \rVert_{L^2(H^1(\mathcal{D});\Gamma)} }{(q_n+1)!}\Bigg) \nonumber \\
 && + \frac{c_{cv}}{8} \lVert \chi \rVert_{\xi}^2.
\end{eqnarray*}
Next, we derive an estimate for the second and third terms in \eqref{err4}. An application of  Cauchy--Schwarz inequality, Young's inequality, the trace inequality \eqref{eq:trace} for $E \in K_1^E \cap K_2^E$, and Theorem~\ref{thm:bestapp} yields
\begin{eqnarray*}
\lvert T_2\rvert \hspace{-3mm}&\leq&\hspace{-3mm} \int \limits_{\Gamma}\rho \Bigg[  \frac{c_{cv}}{8} \sum \limits_{ E \in \mathcal{E}^{0}_h \cup \mathcal{E}_h^{\partial}}\frac{\sigma }{h_E} \lVert  \chi\rVert^2_{L^2(E)}
	+ \frac{2}{c_{cv}} \sum \limits_{ E \in \mathcal{E}^{0}_h \cup \mathcal{E}_h^{\partial}} \frac{h_E}{\sigma }\lVert\average{a \nabla (u-\widetilde{u}) } \rVert^2_{L^2(E)} \Bigg]d\xi \nonumber \\
	\hspace{-3mm}&\leq&\hspace{-3mm}   C\hspace{-1.5mm}\int \limits_{\Gamma}\hspace{-1.5mm}\rho \hspace{-2.5mm} \sum \limits_{ E \in \mathcal{E}^{0}_h \cup \mathcal{E}_h^{\partial}} \hspace{-2.5mm} \frac{h_E}{\sigma} h_E \lvert K_1^E \rvert^{-1} \Bigg( \lVert \nabla(u-\widetilde{u}) \rVert_{L^2(K_1^E)} + h_{K_1^E}\lVert \nabla^2(u-\widetilde{u}) \rVert_{L^2(K_1^E)}\Bigg)^2 \hspace{-1.5mm}d\xi \nonumber \\
	&&\hspace{-3.5mm} + C\hspace{-1.5mm}\int \limits_{\Gamma}\hspace{-1.5mm}\rho \hspace{-2.5mm} \sum \limits_{ E \in \mathcal{E}^{0}_h \cup \mathcal{E}_h^{\partial}} \hspace{-2.5mm} \frac{h_E}{\sigma} h_E \lvert K_2^E \rvert^{-1} \Bigg( \lVert \nabla(u-\widetilde{u}) \rVert_{L^2(K_2^E)} + h_{K_2^E}\lVert \nabla^2(u-\widetilde{u}) \rVert_{L^2(K_2^E)}\Bigg)^2\hspace{-1.5mm} d\xi \nonumber \\
    && \hspace{-3.5mm}+\frac{c_{cv}}{8}\lVert \chi \rVert_{\xi}^2 \nonumber  \\
	&\leq& \hspace{-3.5mm} C \Bigg( \lVert\nabla(u-\widetilde{u}) \rVert_{L^2(L^2(\mathcal{D});\Gamma)} + h\lVert \nabla^2(u-\widetilde{u}) \rVert_{L^2(H_0^1(\mathcal{D});\Gamma)}\Bigg)^2 + \frac{c_{cv}}{8}\lVert \chi \rVert_{\xi}^2\nonumber \\	
	&\leq&\hspace{-3.5mm}  C \Bigg(h^{\min(\ell+1,2)-1} \lVert u \rVert_{L^2(H^2(\mathcal{D});\Gamma)} + \sum_{n=1}^{N}\bigg(\frac{k_n}{2}\bigg)^{q_n+1}\dfrac{\lVert \partial^{q_n+1}_{\xi_n} u \rVert_{L^2(H^1(\mathcal{D});\Gamma)} }{(q_n+1)!}\Bigg)^2 \nonumber \\
    && + \frac{c_{cv}}{8}\lVert \chi \rVert_{\xi}^2, \nonumber \\
\lvert T_3 \rvert \hspace{-3mm}&\leq& \hspace{-3mm}\int \limits_{\Gamma}\rho  \sum \limits_{ E \in \mathcal{E}^{0}_h \cup \mathcal{E}_h^{\partial}} \lVert\average{a \nabla \chi  } \rVert_{L^2(E)} \lVert \jump{u-\widetilde{u}}\rVert_{L^2(E)} \,  d\xi \\
	&\leq& \hspace{-3mm}\int \limits_{\Gamma}\rho  \sum \limits_{K \in \mathcal{T}_h}  \Bigg( C\bigg( \lVert u-\widetilde{u} \rVert_{L^2(K)} + h_{K} \lVert \nabla(u-\widetilde{u}) \rVert_{L^2(K)} \bigg) a \lVert \nabla\chi\rVert_{L^2(K)}\Bigg) \,  d\xi\\
	&\leq& \hspace{-3mm}C\Bigg(h^{\min(\ell+1,2)-1} \lVert u \rVert_{L^2(H^2(\mathcal{D});\Gamma)} + \sum_{n=1}^{N}\bigg(\frac{k_n}{2}\bigg)^{q_n+1}\dfrac{\lVert \partial^{q_n+1}_{\xi_n} u \rVert_{L^2(H^1(\mathcal{D});\Gamma)} }{(q_n+1)!}\Bigg)^2 \\
&&+ \frac{c_{cv}}{8}\lVert \chi\rVert_{\xi}^2.
\end{eqnarray*}
By Cauchy--Schwarz inequality, Young's inequality, the trace inequality \eqref{eq:trace}, and Theorem~\ref{thm:bestapp}, we find an upper bound for $T_4$ in \eqref{err4}
\begin{eqnarray*}
	\lvert T_4\rvert \hspace{-3mm}&\leq& \hspace{-3mm} \int \limits_{\Gamma}\rho  \Bigg[  \sum \limits_{ E \in \mathcal{E}^{0}_h \cup \mathcal{E}_h^{\partial}} \frac{\sigma }{h_E} \int \limits_E \jump{(u-\widetilde{u})} \cdot \jump{\chi} \Bigg]  \, d\xi  \\
    &\leq& \frac{2}{c_{cv}} \int \limits_{\Gamma}\rho  \sum \limits_{ E \in \mathcal{E}^{0}_h \cup \mathcal{E}_h^{\partial}} \bigg( \frac{\sigma }{h_E} \bigg)  \lVert\jump{ u-\widetilde{u}}\rVert^2_{L^2(E)} d\xi\\
    &&+ \frac{c_{cv}}{8} \int \limits_{\Gamma}\rho   \sum \limits_{ E \in \mathcal{E}^{0}_h \cup \mathcal{E}_h^{\partial}} \bigg(\frac{\sigma }{h_E}\bigg) \lVert \jump{\chi} \rVert^2_{L^2(E)} d\xi\\
    &\leq& \frac{2}{c_{cv}} \int \limits_{\Gamma}\rho  \sum \limits_{K \in \mathcal{T}_h}   C\bigg( \lVert u-\widetilde{u} \rVert_{L^2(K)} + h_{K} \lVert \nabla(u-\widetilde{u}) \rVert_{L^2(K)} \bigg)^2  d\xi + \frac{c_{cv}}{8} \lVert\chi \rVert_{\xi}^2\\
    &\leq& C\Bigg(h^{\min(\ell+1,2)-1} \lVert u \rVert_{L^2(H^2(\mathcal{D});\Gamma)} + \sum_{n=1}^{N}\bigg(\frac{k_n}{2}\bigg)^{q_n+1}\dfrac{\lVert \partial^{q_n+1}_{\xi_n} u \rVert_{L^2(H^1(\mathcal{D});\Gamma)} }{(q_n+1)!}\Bigg)^2  \\
    &&+ \frac{c_{cv}}{8}\lVert \chi\rVert_{\xi}^2.
\end{eqnarray*}
Now, we derive estimates for the convective terms in \eqref{err4}. By following the similar steps as done before with $\mathbf{b} \in \big( L^{\infty}(\overline{\mathcal{D}}) \big)^2$, we obtain
\begin{eqnarray*}
	\lvert T_5\rvert\hspace{-3mm} &\leq& \hspace{-3mm}\int \limits_{\Gamma}\rho \Bigg( \frac{2}{c_{cv}} \dfrac{\|\mathbf{b}\|_{L^{\infty}(\mathcal{D})}}{\sqrt{a_{\max}}} \sum \limits_{ K \in \mathcal{T}_h}  \lVert  u-\widetilde{u}\rVert^2_{L^2(K)}
	 + \frac{c_{cv}}{8} \sum \limits_{K \in \mathcal{T}_h} \lVert \sqrt{a_{\max}}\nabla \chi \rVert^2_{L^2(K)}\Bigg) d\xi \\
	 &\leq& \frac{2}{c_{cv}}C \sum \limits_{ K \in \mathcal{T}_h} \int \limits_{\Gamma}\rho \lVert  u-\widetilde{u}\rVert^2_{L^2(K)} \;d\xi + \frac{c_{cv}}{8} \int \limits_{\Gamma}\rho \lVert \chi\rVert^2_e \;d\xi \\
	 &\leq& C\Bigg(h^{\min(\ell+1,2)-1} \lVert u \rVert_{L^2(H^2(\mathcal{D});\Gamma)} + \sum_{n=1}^{N}\bigg(\frac{k_n}{2}\bigg)^{q_n+1}\dfrac{\lVert \partial^{q_n+1}_{\xi_n} u \rVert_{L^2(H^1(\mathcal{D});\Gamma)} }{(q_n+1)!}\Bigg)^2 \\
 &&+ \frac{c_{cv}}{8}\lVert \chi\rVert_{\xi}^2, \\
	\lvert T_6\rvert \hspace{-3mm}&\leq&\hspace{-3mm} \int \limits_{\Gamma}\rho  \Bigg( \sum \limits_{ E \in \mathcal{E}^{0}_h}  \lVert \sqrt{\mathbf {b} \cdot \mathbf{n_E}}\big( u-\widetilde{u}\big)\rVert^2_{L^2(E)} \Bigg)^{\frac{1}{2}}\Bigg( \sum \limits_{E \in \mathcal{E}^{0}_h } \lVert \sqrt{\mathbf {b} \cdot \mathbf{n_E}}  \big(\chi^e -\chi\big)\rVert_E^2 \Bigg)^{\frac{1}{2}} d\xi \\
	&\leq& \hspace{-3mm}\frac{2}{c_{cv}}C \hspace{-2mm}\sum \limits_{ K \in \mathcal{T}_h} \int \limits_{\Gamma}\hspace{-1.5mm}\rho \bigg( \lVert u-\widetilde{u} \rVert_{L^2(K)} + h_{K} \lVert \nabla(u-\widetilde{u}) \rVert_{L^2(K)} \bigg)^2 \;\hspace{-2mm}d\xi + \frac{c_{cv}}{8} \int \limits_{\Gamma}\hspace{-1.5mm}\rho \lVert \chi\rVert^2_e \;d\xi\\
	&\leq& \hspace{-3mm}C\Bigg(h^{\min(\ell+1,2)-1} \lVert u \rVert_{L^2(H^2(\mathcal{D});\Gamma)} + \sum_{n=1}^{N}\bigg(\frac{k_n}{2}\bigg)^{q_n+1}\dfrac{\lVert \partial^{q_n+1}_{\xi_n} u \rVert_{L^2(H^1(\mathcal{D});\Gamma)} }{(q_n+1)!}\Bigg)^2 \\
&&+ \frac{c_{cv}}{8}\lVert \chi\rVert_{\xi}^2,
\end{eqnarray*}
\begin{eqnarray*}
	\lvert T_7\rvert \hspace{-3mm}&\leq&\hspace{-3mm}\int \limits_{\Gamma}\rho  \Bigg( \sum \limits_{ E \in \mathcal{E}^{0}_h}  \lVert \sqrt{\mathbf {b} \cdot \mathbf{n_E}}\big(  u-\widetilde{u}\big)\rVert^2_{L^2(E)} \Bigg)^{\frac{1}{2}}\Bigg( \sum \limits_{E \in \mathcal{E}^{\partial}_h } \lVert \sqrt{\mathbf {b} \cdot \mathbf{n_E}} \chi \rVert_E^2 \Bigg)^{\frac{1}{2}} d\xi \\
	&\leq& \hspace{-3mm}\frac{2}{c_{cv}}C \hspace{-2mm}\sum \limits_{ K \in \mathcal{T}_h} \int \limits_{\Gamma}\hspace{-1.5mm}\rho \bigg( \lVert u-\widetilde{u} \rVert_{L^2(K)} + h_{K} \lVert \nabla(u-\widetilde{u}) \rVert_{L^2(K)} \bigg)^2 \;\hspace{-2mm}d\xi + \frac{c_{cv}}{8} \int \limits_{\Gamma}\hspace{-1.5mm}\rho \lVert \chi\rVert^2_e \;d\xi\\
	&\leq& \hspace{-3mm}C\Bigg(h^{\min(\ell+1,2)-1} \lVert u \rVert_{L^2(H^2(\mathcal{D});\Gamma)} + \sum_{n=1}^{N}\bigg(\frac{k_n}{2}\bigg)^{q_n+1}\dfrac{\lVert \partial^{q_n+1}_{\xi_n} u \rVert_{L^2(H^1(\mathcal{D});\Gamma)} }{(q_n+1)!}\Bigg)^2 \\
&&+ \frac{c_{cv}}{8}\lVert \chi\rVert_{\xi}^2.
\end{eqnarray*}
Combining the bounds of $T_1$--$T_7$, we obtain the following result
\begin{eqnarray}\label{errm1}
\lVert u_h-\widetilde{u}\rVert_{\xi}  &\leq& C \left( h^{\min(\ell+1,2)-1} \lVert u \rVert_{L^2(H^2(\mathcal{D});\Gamma)}   \right. \\
&& \quad +\sum_{n=1}^{N}\bigg(\frac{k_n}{2}\bigg)^{q_n+1}\dfrac{\lVert \partial^{q_n+1}_{\xi_n} u \rVert_{L^2(H^1(\mathcal{D});\Gamma)} }{(q_n+1)!}\Bigg). \nonumber
\end{eqnarray}

Now, we discuss the second term in \eqref{err3}, i.e., $\lVert u-\tilde{u}\rVert_{\xi}$. By the definition of energy norm in \eqref{energynorm}, we have
\begin{eqnarray*}
   \lVert u-\widetilde{u}\rVert_{\xi}^2 &=&  \int \limits_{\Gamma}\rho  \lVert u-\widetilde{u}\rVert_e^2 \;d\xi \\
   &=& \int \limits_{\Gamma}\rho  \Bigg[
   \sum \limits_{K \in \mathcal{T}_h} \int \limits_{K} a(.,\omega) (\nabla(u-\widetilde{u}))^2\; dx +\hspace{-2mm} \sum \limits_{ E \in \mathcal{E}^{0}_h \cup \mathcal{E}_h^{\partial}} \frac{\sigma}{h_E} \int \limits_E \jump{u-\widetilde{u}}^2 \; ds \\
  && + \frac{1}{2}\sum \limits_{ E \in \mathcal{E}_h^{\partial}}\int \limits_E \mathbf {b}(.,\omega)\cdot \mathbf{n_E}(u-\widetilde{u})^2 ds \\
  && + \frac{1}{2}\sum \limits_{ E \in \mathcal{E}^{0}_h }\int \limits_E \mathbf {b}(.,\omega)\cdot \mathbf{n_E}((u-\widetilde{u})^e-(u-\widetilde{u}))^2 ds
    \Bigg]  d\xi\\
    &=& A_1+A_2+A_3+A_4.
\end{eqnarray*}
One can easily derive the following estimates as done in previous steps
\begin{eqnarray*}
	A_1	\hspace{-3mm}& \leq & \hspace{-3mm} \int \limits_{\Gamma}\rho  a_{\max}	\sum \limits_{K \in \mathcal{T}_h} \lVert \nabla(u-\widetilde{u}) \rVert^2_{L^2(K)} \; d\xi \\
	&=& \hspace{-3mm}C\lVert  \nabla(u-\widetilde{u}) \rVert^2_{L^2(L^2(\mathcal{D});\Gamma)} \\
	&\leq&\hspace{-3mm} C\Bigg(h^{\min(\ell+1,2)-1} \lVert u \rVert_{L^2(H^2(\mathcal{D});\Gamma)} + \sum_{n=1}^{N}\bigg(\frac{k_n}{2}\bigg)^{q_n+1}\dfrac{\lVert \partial^{q_n+1}_{\xi_n} u \rVert_{L^2(H^1(\mathcal{D});\Gamma)} }{(q_n+1)!}\Bigg)^2,
\end{eqnarray*}
\begin{eqnarray*}
	A_2	\hspace{-3mm}&\leq&\hspace{-3mm} \int \limits_{\Gamma}\rho \sum \limits_{ E \in \mathcal{E}^{0}_h \cup \mathcal{E}_h^{\partial}} \frac{\sigma }{h_E} \Bigg( Ch_E^{\frac{1}{2}} \lvert K_1^E\rvert^{-\frac{1}{2}} \big( \lVert u-\widetilde{u} \rVert_{L^2(K_1^E)} + h_{K_1^E}\lVert \nabla(u-\widetilde{u}) \rVert_{L^2(K_1^E)}\big)   \\
	&& \hspace{-3mm}+ Ch_E^{\frac{1}{2}} \lvert K_2^E\rvert^{-\frac{1}{2}} \big( \lVert u-\widetilde{u} \rVert_{L^2(K_2^E)} + h_{K_2^E}\lVert \nabla(u-\widetilde{u}) \rVert_{L^2(K_2^E)}\big) \Bigg)^2\\
	&\leq& \hspace{-3mm} \int \limits_{\Gamma}\rho  \sum \limits_{K \in \mathcal{T}_h}   C\bigg( \lVert u-\widetilde{u} \rVert_{L^2(K)} + h_{K} \lVert \nabla(u-\widetilde{u}) \rVert_{L^2(K)} \bigg)^2  d\xi\\
	&\leq& \hspace{-3mm}C \bigg( \lVert u-\widetilde{u} \rVert_{L^2(L^2(\mathcal{D});\Gamma)} + h \lVert \nabla(u-\widetilde{u}) \rVert_{L^2(L^2(\mathcal{D});\Gamma)} \bigg)^2\\
	&\leq&\hspace{-3mm} C\Bigg(h^{\min(\ell+1,2)-1} \lVert u \rVert_{L^2(H^2(\mathcal{D});\Gamma)} + \sum_{n=1}^{N}\bigg(\frac{k_n}{2}\bigg)^{q_n+1}\dfrac{\lVert \partial^{q_n+1}_{\xi_n} u \rVert_{L^2(H^1(\mathcal{D});\Gamma)} }{(q_n+1)!}\Bigg)^2, \\
	A_3	\hspace{-3mm}&\leq&\hspace{-3mm} \frac{1}{2}\int \limits_{\Gamma}\rho \sum \limits_{ E \in \mathcal{E}_h^{\partial}} \vert \mathbf{b}\cdot \mathbf{n_E}\rvert \lVert u-\widetilde{u} \rVert_{L^2(E)}^2\;d\xi\\
	&\leq&\hspace{-3mm} C\int \limits_{\Gamma}\rho\big( \lVert u-\widetilde{u} \rVert_{L^2(\mathcal{D})} + h\lVert \nabla(u-\widetilde{u}) \rVert_{L^2(L^2(\mathcal{D});\Gamma)}\big)^2\;d\xi\\
	&\leq&\hspace{-3mm} C\Bigg(h^{\min(\ell+1,2)-1} \lVert u \rVert_{L^2(H^2(\mathcal{D});\Gamma)} + \sum_{n=1}^{N}\bigg(\frac{k_n}{2}\bigg)^{q_n+1}\dfrac{\lVert \partial^{q_n+1}_{\xi_n} u \rVert_{L^2(H^1(\mathcal{D});\Gamma)} }{(q_n+1)!}\Bigg)^2, \\
	A_4\hspace{-3mm}	&\leq& \hspace{-3mm} \frac{1}{2}\int \limits_{\Gamma}\rho \sum \limits_{ E \in \mathcal{E}^{0}_h } \lvert \mathbf{b} \cdot \mathbf{n_E} \rvert \bigg(\lVert (u-\widetilde{u})^e\rVert_{L^2(E)}+ \lVert (u-\widetilde{u}) \rVert_{L^2(E)} \bigg)^2 \;d\xi \\
	&\leq&\hspace{-3mm} C\big( \lVert u-\widetilde{u} \rVert_{L^2(L^2(\mathcal{D});\Gamma)} + h\lVert \nabla(u-\widetilde{u}) \rVert_{L^2(L^2(\mathcal{D});\Gamma)}\big)^2\;\\
	&\leq&\hspace{-3mm} C\Bigg(h^{\min(\ell+1,2)-1} \lVert u \rVert_{L^2(H^2(\mathcal{D});\Gamma)} + \sum_{n=1}^{N}\bigg(\frac{k_n}{2}\bigg)^{q_n+1}\dfrac{\lVert \partial^{q_n+1}_{\xi_n} u \rVert_{L^2(H^1(\mathcal{D});\Gamma)} }{(q_n+1)!}\Bigg)^2.
\end{eqnarray*}
Summation of the bounds of $A_1$--$A_4$ gives us
\begin{eqnarray}\label{errm2}
\lVert u-\widetilde{u}\rVert_{\xi}^2 &\leq& C\Bigg(h^{\min(\ell+1,2)-1} \lVert u \rVert_{L^2(H^2(\mathcal{D});\Gamma)} \nonumber  \\
&& \qquad + \sum_{n=1}^{N}\bigg(\frac{k_n}{2}\bigg)^{q_n+1}\dfrac{\lVert \partial^{q_n+1}_{\xi_n} u \rVert_{L^2(H^1(\mathcal{D});\Gamma)} }{(q_n+1)!}\Bigg)^2 .
\end{eqnarray}
Finally, we obtain the desired result from \eqref{errm1} and \eqref{errm2}.
\end{document}